\theoremstyle{plain}
\newtheorem{theorem}{Theorem}[section]
\newtheorem{lemma}[theorem]{Lemma}
\newtheorem{proposition}[theorem]{Proposition}
\theoremstyle{remark}
\newtheorem{definition}[theorem]{Definition}
\newtheorem{corollary}[theorem]{Corollary}
\newtheorem{remark}[theorem]{Remark}
\newtheorem{assumption}[theorem]{Assumption}
\def\R{\mathbb{R}}
\def\tg{\tilde{g}}
\def\C{\mathbb{C}}
\def\eps{\varepsilon}
\def\der{\mathrm{d}}
\def\G{\mathsf{G}}
\def\SO{\mathsf{SO}}
\def\E{\mathbb{E}}
\def\orbit{\mathcal{O}}
\def\der{\mathrm{d}}
\newcommand{\T}{\mathsf{T}}
\def\1{\mathbf{1}}
\def\E{\mathbb{E}}
\def\td{\tilde{d}}
\def\frakg{\mathfrak{g}}
\DeclareMathOperator{\Id}{Id}
\DeclareMathOperator{\diag}{diag}
\DeclareMathOperator{\rank}{rank}
\DeclareMathOperator*{\argmin}{arg\,min}
\DeclareMathOperator*{\argmax}{arg\,max}
\renewcommand{\i}{\mathbf{i}}
\renewcommand{\P}{\mathbb{P}}
\title{Misspecified Maximum Likelihood Estimation for Non-Uniform Group Orbit Recovery}
\author{Sheng Xu\thanks{Program in Applied and Computational Mathematics, Princeton University, \href{mailto:sxu21@princeton.edu}{sxu21@princeton.edu}},  Anderson Ye Zhang\thanks{Department of Statistics and Data Science, University of Pennsylvania, \href{mailto:ayz@wharton.upenn.edu}{ayz@wharton.upenn.edu}}, and Amit Singer\thanks{Program in Applied and Computational Mathematics and Department of Mathematics, Princeton University, \href{mailto:amits@math.princeton.edu}{amits@math.princeton.edu}}}
\date{}
\begin{document}
\maketitle

\let\thefootnote\relax\footnotetext{\textit{Keywords and phrases:} Mixture models, group orbit recovery, maximum likelihood estimation, model misspecification.}

\begin{abstract}
We study maximum likelihood estimation (MLE) in the generalized group orbit recovery model, where each observation is generated by applying a random group action and a known, fixed linear operator to an unknown signal, followed by additive noise. This model is motivated by single-particle cryo-electron microscopy (cryo-EM) and can be viewed primarily as a structured continuous Gaussian mixture model. In practice, signal estimation is often performed by marginalizing over the group using a uniform distribution—an assumption that typically does not hold and renders the MLE misspecified. This raises a fundamental question: how does the misspecified MLE perform? We address this question from several angles. First, we show that in the absence of projection, the misspecified population log-likelihood has desired optimization landscape that leads to correct signal recovery. In contrast, when projections are present, the global optimizers of the misspecified likelihood deviate from the true signal, with the magnitude of the bias depending on the noise level. To address this issue, we propose a joint estimation approach tailored to the cryo-EM setting, which parameterizes the unknown distribution of the group elements and estimates both the signal and distribution parameters simultaneously.
\end{abstract}

\section{Introduction}\label{sec:introduction}
We study the problem of recovering a signal from noisy, randomly transformed and projected observations, where each transformation belongs to a known compact subgroup of the orthogonal group. Formally, let $\theta_*\in\R^d$ be an unknown signal of interest and $\G\subset \mathsf{O}(d)$ be a known compact subgroup of the orthogonal group of dimension $d$. Consider the \textit{generalized group orbit recovery} model as follows, where we observe $n$ i.i.d. samples:
\begin{equation}\label{eq:projectedmodel}
Y_i=P  g_i \theta_* +\sigma \eps_i \in \R^{\td}, \qquad i=1,\ldots,n.
\end{equation}
Here, $P \in\R^{\td\times d}$ is a known linear map from $\R^d$ to $\R^{\tilde{d}}$, which typically represents a projection. This model is therefore also known as \textit{projected group orbit recovery} \cite{abbe2018estimation, bandeira2023estimation, fan2023likelihood, fan2024maximum}. The matrices $g_1,\ldots,g_n \overset{iid}{\sim} \Lambda_*$ are unknown random orthogonal matrices in $\G$, and
$\eps_1,\ldots,\eps_n \overset{iid}{\sim} \mathrm{N}(0,\Id_{\td})$ are
Gaussian noise vectors in the observation space of dimension $\td$. The probability measure $\Lambda_*$, known as the \textit{group measure}, represents an unknown (possibly non-Haar) probability measure on $\G$. Typically, the group element $g\in\G$ corresponds to an irreducible matrix representation of some intrinsic group element $\frakg$, as will be illustrated in the examples below. Our goal is to estimate the underlying true orbit $\orbit_{\theta_*}:=\{g\theta_*:g\in \G\}$ given the observed samples $Y_1,\ldots,Y_n$ without knowledge of the latent group measure $\Lambda_*$.

A primary motivation for studying the generalized orbit recovery model arises from its close connection to single-particle cryo-electron microscopy (cryo-EM), a widely used imaging technique in structural biology \cite{dubochet1988cryo, henderson1990model, frank2006three}. Cryo-EM is used to reconstruct a three-dimensional molecular structure—typically represented as an electrostatic potential function defined on $\R^3$—from numerous two-dimensional projection images. Each image corresponds to a tomographic projection of the molecule taken at an unknown and randomly distributed orientation in 3-D. In this setting, the linear operator $P$ corresponds to the physical process of tomographic projection from 3-D to 2-D, typically modeled by the X-ray transform\footnote{More precisely, in cryo-EM each image is affected by a distinct projection operator \( P_i \), which incorporates not only the tomographic projection from 3-D to 2-D but also the effect of the microscope's contrast transfer function (CTF), which varies across images.}.

A standard approach to reconstructing the molecular structure is to perform maximum likelihood estimation (MLE), often combined with regularization and implemented through iterative algorithms such as expectation-maximization (EM) algorithm or stochastic gradient descent \cite{sigworth1998maximum, scheres2012relion, punjani2017cryosparc}. A key component of these methods is the need to specify a prior distribution over the latent orientations of the particles. Since the true orientation distribution is typically unknown and is generally not accessible as prior knowledge for reconstruction, it is common in applications to assume a uniform (Haar) distribution over the rotation group $\SO(3)$. While this modeling choice simplifies computation, it introduces a potential source of \textit{misspecification}, especially when the true distribution is non-uniform due to experimental factors such as preferred particle orientations or sample preparation artifacts \cite{taylor2008retrospective, liu2013deformed, tan2017addressing, glaeser2017opinion, noble2018routine, carragher2019current, lyumkis2019challenges, baldwin2020non}.

Despite its widespread use in practice, the statistical and computational consequences of using a misspecified group distribution, particularly the Haar measure, remain poorly understood. Fundamental questions—such as whether this misspecification leads to bias in the reconstructed structure, how such bias interacts with the noise level, and whether consistent recovery is still achievable—have not been systematically studied. This paper aims to address these questions by initiating a theoretical study of orbit recovery under unknown or misspecified group distributions. By analyzing the effects of replacing the true latent group measure with an incorrect or idealized alternative (such as the Haar measure), we seek to understand when and how such approximations affect estimation accuracy.

The main contributions of this paper are summarized as follows:

\begin{enumerate}

\item{\emph{Misspecified MLE Works Without Projection.}} In the absence of projection (i.e., when the observations are full-dimensional, $P=\Id$), we show that the misspecified negative population log-likelihood—specifically using the Haar measure—has a landscape that is invariant to the true underlying group measure, even when the true measure has local support. Remarkably, this misspecified objective still exhibits a favorable landscape in which the true signal is a global minimizer. This leads to a surprising and practically important conclusion: in the non-projected setting, the misspecified MLE achieves exact recovery without knowledge of the true group distribution.

\vspace{0.8mm}

\item{\emph{Bias under Projection.}} When projection is present, we show that the global minimizers of the misspecified negative population log-likelihood generally do not coincide with the true orbit. That is, the estimated orbit under a misspecified group distribution may be systematically biased away from the true one. We quantify this deviation and show that its magnitude depends on the noise level $\sigma$, vanishing as $\sigma$ goes to $0$. These findings provide theoretical justification for the use of misspecified MLE with an idealized Haar prior in the low-noise regime, where the induced bias becomes negligible and accurate recovery remains achievable. Notably, the precise threshold for this regime depends on the interaction between the noise level and the underlying group measure, and may in practice require extremely low-noise levels. However, in the high-noise regime—which is particularly relevant in cryo-EM—this bias can become substantial \cite{liu2025overcoming}. Our results suggest that relying on a uniform prior in such settings can lead to systematically distorted reconstructions. Moreover, standard validation techniques—such as the Fourier shell correlation on disjoint subsets of data \cite{harauz1986exact, saxton1982correlation}, which has become the universal resolution metric for assessing 3-D reconstruction quality in cryo-EM \cite{rosenthal2003optimal, scheres2012prevention}—may fail to detect this bias if it is shared across reconstructions, underscoring the importance of modeling the orientation distribution accurately in practice.

\vspace{0.8mm}

\item{\emph{Joint Estimation with Parameterized Group Measure.}} To address the bias introduced by misspecification and to handle the unknown nature of $\Lambda_*$, we propose a joint estimation approach that parameterizes the group measure. In the context of cryo-EM, where $\Lambda_*$ is a distribution on $\SO(3)$, we represent the group measure via a basis expansion over the entries of \textit{real} Wigner D-matrices with coefficients $\mathcal{B}_*$. This choice is natural and theoretically justified by Peter-Weyl Theorem, which ensures these entries form a complete orthogonal basis for square-integrable functions on $\SO(3)$. Intuitively, this is analogous to representing a smooth function on the sphere via spherical harmonics, or a function on the circle via Fourier modes. The resulting formulation leads to a joint likelihood involving both the signal $\theta$ and the coefficients $\mathcal{B}$, and we further establish consistency of the joint MLE. While other parameterizations are possible, the Wigner matrix-based expansion offers a general and flexible framework that may be applicable to other problems involving distributions on $\SO(3)$ and related groups.
\end{enumerate}

\subsection{Related Literature} A natural perspective on group orbit recovery is to view it as a structured Gaussian mixture model, where the components lie along a group orbit and the mixing distribution $\Lambda_*$ is supported on a (typically continuous) compact group. This viewpoint connects orbit recovery to a broad class of latent variable models, highlighting the role of symmetry and group structure in shaping both statistical limits and algorithmic behavior. While relevant to both discrete and continuous groups, our focus is on the continuous setting, as motivated by applications such as cryo-EM.

\paragraph*{Group Orbit Recovery} The group orbit recovery model has emerged as a unifying framework for a range of statistical estimation problems where signals are observed under latent group transformations, including multi-reference alignment (MRA) \cite{perry2019sample, abbe2018multireference, bandeira2020optimal, brunel2019learning, fan2023likelihood, romanov2021multi}, cryo-EM \cite{bandeira2023estimation, abbe2018estimation, bendory2019multi, sharon2020method, bandeira2020non}, and other problems with latent group structure \cite{pumir2021generalized}.

A central line of research has investigated the sample complexity of orbit recovery, showing that latent group actions can substantially increase the difficulty of estimation. For example, in discrete MRA, \cite{perry2019sample} established that the minimax estimation error scales as $\sigma^3$ in the high-noise regime, in contrast to the usual $\sigma$ scaling in standard models without hidden group structure. Subsequent work extended these results to continuous settings, establishing sample complexity bounds under broader classes of group actions \cite{abbe2018multireference, bandeira2020optimal, bandeira2023estimation}, as well as under additional structural assumptions such as sparsity \cite{bendory2023autocorrelation, bendory2024sample}.

Recent work has studied the nonconvex geometry of the log-likelihood in orbit recovery, revealing deep connections between statistical behavior, optimization landscape, and group invariants. For discrete groups without projection, \cite{fan2023likelihood} showed that the landscape is benign at low noise but may exhibit spurious local optima at high noise. \cite{fan2024maximum} extended this analysis to continuous groups with potential projection, relating critical points of the log-likelihood to a sequence of moment optimization problems. In parallel, \cite{katsevich2023likelihood} studied low-SNR Gaussian mixtures and showed that moment matching approximates likelihood optimization, revealing a shared structure across estimation methods.

Most prior theoretical work on orbit recovery assumes the latent group elements follow the Haar (uniform) measure, an assumption dating back to Kam’s seminal method-of-moments approach for cryo-EM \cite{kam1980reconstruction}. While simplifying analysis and computation, this assumption often fails in applications like cryo-EM, where orientation distributions can be highly non-uniform. Some prior efforts have explored non-uniformity from algorithmic or numerical perspectives \cite{abbe2018multireference, sharon2020method, bendory2022dihedral}, but the statistical consequences of such misspecification remain underexplored. This work initiates a systematic study of estimation under misspecified group distributions. We analyze how using an incorrect prior—such as assuming uniformity—affects signal recovery, especially in the presence of projection, and propose a joint estimation framework to address these challenges.

\paragraph*{Misspecification} The misspecified MLE—also known as the quasi-maximum likelihood estimator or pseudo-likelihood estimator—is known to minimize the Kullback-Leibler divergence between the true model and the misspecified model class. Beyond likelihood-based approaches, Bayesian methods under model misspecification have also been studied; see, for example, \cite{kleijn2012bernstein} and \cite{wang2019variational}, which analyzed posterior contraction and variational inference under misspecified models.

Much of the existing literature on misspecified MLE focuses on the asymptotic properties of the method, including consistency and limiting distributions as the sample size grows, as in \cite{van2000asymptotic,white1996estimation}.  In certain settings, such as community detection where the parameter of interest is discrete, the method can still recover the true parameter exactly despite misspecification \cite{amini2013pseudo}. However, in general, the estimator is biased. 

For discrete Gaussian mixture models, \cite{dwivedi2018theoretical} and \cite{dwivedi2020singularity} examined several simple settings—such as a three-component Gaussian mixture on $\mathbb{R}$ misspecified by a symmetric two-component mixture—where the bias of the misspecified MLE can be characterized explicitly. These works also analyzed the convergence behavior of the EM algorithm under model misspecification. Nevertheless, a general understanding of bias in misspecified Gaussian mixtures remains limited. Our work extends this line of research by investigating bias phenomena in a more general and structured setting, namely group orbit recovery model that corresponds to continuous Gaussian mixtures supported on group orbits. We uncover a surprising dichotomy: when there is no projection, the MLE exhibits no asymptotic bias, even under severely misspecified group measure; in contrast, in the presence of projection, the bias becomes nonzero and depends on the signal-to-noise ratio.

\subsection{Organization}
The paper is organized as follows. Section~\ref{sec:setup} presents the formal setup of the model, along with motivating examples, a discussion of identifiability, and the correctly specified MLE. Section~\ref{sec:landscape_without_projection} analyzes the optimization landscape of the misspecified MLE in the absence of projection. Section~\ref{sec:landscape_with_projection} investigates the setting with projection. In Section~\ref{sec:joint_MLE}, we introduce a parameterization of the rotation distribution and develop a joint MLE framework for cryo-EM applications. Section~\ref{sec:simulations} includes numerical experiments. Section~\ref{sec:discussion} is a discussion section. Due to space constraints, additional examples, all proofs (except that of Theorem~\ref{thm:withoutProj}), and further technical details on spherical harmonics and representations of rotations are provided in the appendix.

\subsection{Notations}
The symbol $:=$ denotes ``is defined as'' and the symbol $\cong$ denotes isomorphism. 
Given square matrices $A_1 \in \mathbb{R}^{d_1 \times d_1}, A_2 \in \mathbb{R}^{d_2 \times d_2}, \dots, A_n \in \mathbb{R}^{d_n \times d_n}$, their \emph{direct sum} is the block-diagonal matrix defined as
\[
\bigoplus_{i=1}^n A_i := 
\begin{pmatrix}
A_1 & 0 & \cdots & 0 \\
0 & A_2 & \cdots & 0 \\
\vdots & \vdots & \ddots & \vdots \\
0 & 0 & \cdots & A_n
\end{pmatrix}
\in \mathbb{R}^{d \times d},
\]
where $d = \sum_{i=1}^n d_i$. Each $A_i$ occupies a diagonal block, and all off-diagonal blocks are zero matrices of appropriate dimensions. We denote $\SO(d)$ as the special orthogonal group of dimension $d$, $\Id$ as the identity matrix, $\|\cdot\|$ as the Euclidean distance for vectors and the operator norm for matrices, $\mathcal{S}^1$ and $\mathcal{S}^2$ as  the unit circle and unit sphere, $\i =\sqrt{-1}$ as the imaginary unit. For two random vectors $X,Y$ of the same dimension, $X\overset{d}{=}Y$ means that they have the same distribution. 

\section{Model Setup and Preliminaries}\label{sec:setup}

This section lays the foundation for our theoretical analysis for formalizing the likelihoods functions and maximum likelihood estimation objectives. In Section \ref{sec:likelihoods}, we introduce the likelihood objectives under both correctly specified and misspecified group measures. Section \ref{sec:examples} illustrates how these expressions arise in concrete applications such as multi-reference alignment and cryo-EM. In Section \ref{sec:identification}, we discuss the identifiability of the model and clarify the notion of the underlying orbit we aim to recover. Finally, Section~\ref{sec:correct_MLE} analyzes the global minimizers of the correctly specified likelihood as a baseline, and sets the stage for Section~\ref{sec:theory}, which turns to the misspecified setting.

\subsection{Likelihood Formulations: Correctly Specified and Misspecified Models}\label{sec:likelihoods}
From (\ref{eq:projectedmodel}), the probability density or likelihood function for $Y=y$ in the generalized orbit recovery model takes the form:
\begin{equation}\label{eq:likelihood}
p_\theta(y;\Lambda_*,\sigma^2)=\int_{\G} \frac{1}{(2\pi\sigma^2)^{\td/2}}
\exp\left(-\frac{\|y-P  g  \theta\|^2}{2\sigma^2}\right)\der
\Lambda_*(g),
\end{equation}
which is the Gaussian mixture density for $Y$, obtained by marginalizing over the unknown rotation $g\sim \Lambda_*$. Here, $\Lambda_*$ and $\sigma^2$ are explicitly included to highlight their dependence. We will refer to this as the \textit{correctly specified} likelihood function in the sequel. In practice, the underlying group measure $\Lambda_*$ is rarely known. A common approach is to replace $\Lambda_*$ by the Haar measure on $\G$, denoted by $\Lambda_0$, yielding the \textit{misspecified} likelihood (see references in the introduction):
\begin{align*}
p_\theta(y;\Lambda_0,\sigma^2)=\int_{\G} \frac{1}{(2\pi\sigma^2)^{\td/2}}
\exp\left(-\frac{\|y-P  g  \theta\|^2}{2\sigma^2}\right)\der
\Lambda_0(g).
\end{align*}
Although one could in principle substitute $\Lambda_*$ with other group measures, the Haar measure is the most widely used surrogate in practice. Therefore, unless otherwise stated, we will use the term misspecified likelihood to specifically refer to the surrogate model based on the Haar measure.

To facilitate theoretical analysis of maximum likelihood methods under both correctly specified and misspecified settings, we define the corresponding negative \textit{population log-likelihood}:
\begin{align}\label{eq:population_likelihood}
\ell(\theta;\Lambda,\sigma^2) := -\E_{\theta_*, \Lambda_*} \log p_\theta(y; \Lambda,\sigma^2),~~~\text{for}~\Lambda=\Lambda_*~\text{and}~\Lambda_0.
\end{align}
Here, the expectations are taken under the true underlying data-generating process, which follows the model in \eqref{eq:projectedmodel}. While both functions depends implicitly on $\theta_*$ and $\Lambda_*$ via the expectation, we omit this dependence for notational simplicity here. In Section \ref{sec:theory}, we will make this dependence explicit where necessary for theoretical analysis.  

These population log-likelihoods represents the large sample limits of their empirical counterparts:
\begin{align}\label{eq:sample_likelihood}
\ell_n(\theta;\Lambda,\sigma^2):=-\frac{1}{n}\sum_{i=1}^n \log p_\theta(Y_i; \Lambda, \sigma^2),~~~\text{for}~\Lambda=\Lambda_*~\text{and}~\Lambda_0.
\end{align}
We are particularly interested in analyzing the landscape of these negative \textit{population log-likelihood} functions—both the correctly specified and the misspecified versions—and their key differences, especially focusing on their global minimizers. For notational and conceptual convenience, we will refer to the global minimizers of the negative \textit{population} log-likelihoods as the maximum likelihood estimators (MLE). While this constitutes a slight abuse of terminology—since the term MLE typically refers to the minimizers of the \textit{empirical} likelihood in \eqref{eq:sample_likelihood}—we adopt this convention throughout, given our focus on asymptotic behavior in the large-sample regime.

\subsection{Motivating Examples}\label{sec:examples}

We introduce several main examples within the framework of the generalized orbit recovery model. In the main text, we primarily discuss two representative cases: symmetric two-component Gaussian mixture and continuous MRA. We then present the likelihood functions for more involved examples—spherical registration, unprojected cryo-EM and cryo-electron tomography (cryo-ET), and projected cryo-EM—with full details deferred to Appendix~\ref{sec:additional_examples} due to space constraints. While the projected cryo-EM example serves as a primary motivation for this work, its complete formulation is provided in the appendix to streamline the exposition. For all these examples, we primarily follow the setup presented in \cite{fan2024maximum}; see also \cite{bandeira2023estimation}. Minor adjustments have been made to ensure a concise and self-contained presentation.

\subsubsection{Symmetric Two-Component Gaussian Mixture}\label{sec:model_2GM}
$P =\Id$ is the identity map. $\G$ is isomorphic to the intrinsic group $\mathbb{Z}_2=\{+1, -1\}$. The group action on $\theta$ is given by
\begin{align}\label{eq:2Gaussian}
g \theta = \begin{cases}
    \theta & \text{if } \frakg=+1,\\
    -\theta & \text{if } \frakg=-1.
\end{cases}
\end{align}
The likelihood function is exactly the marginal probability density for symmetric two-component Gaussian mixture
\begin{align}\label{eq:likelihood_2G}
p_{\theta}(y) =&\int_{\mathbb{Z}_2} \frac{1}{(2\pi\sigma^2)^{d/2}}
\exp\left(-\frac{\|y-\frakg\theta\|^2}{2\sigma^2}\right) \der \lambda_*(\frakg) \notag\\
=&\pi \cdot \frac{1}{(2\pi\sigma^2)^{d/2}}
\exp\left(-\frac{\|y-\theta\|^2}{2\sigma^2}\right)+(1-\pi) \cdot \frac{1}{(2\pi\sigma^2)^{d/2}}
\exp\left(-\frac{\|y+\theta\|^2}{2\sigma^2}\right),
\end{align}
where $\lambda_*$ is the Bernoulli distribution on $\mathbb{Z}_2=\{+1,-1\}$ with probability $\pi$ on $+1$. In other words, $\pi$ corresponds to the mixture weight for component $\theta$.

\subsubsection{Continuous MRA}\label{sec:model_MRA}
We consider estimating a periodic function on the unit circle $f:\mathcal{S}^1\to \R$, and identify $\mathcal{S}^1$ with $[0,2\pi)$. Assume $f\in L_2(\mathcal{S}^1, \R)$ and admits a bandlimited representation under the real Fourier basis (see \cite[Section 3]{fan2024maximum}):
\begin{align*}
f(t) = \theta^{(0)} + \sum_{l=1}^L \theta^{(l)}_1 \sqrt{2}\cos  lt + \sum_{l=1}^L \theta^{(l)}_2 \sqrt{2}\sin lt,  \quad\text{ for }t\in[0, 2\pi),
\end{align*}
where $L\geq 1$ is the bandlimit. We represent the rotation of $f$ by an element $\frakg\in \mathsf{SO}(2)\cong [0,2\pi)$ as $f_\frakg(t) = f(t+\frakg \text{ mod } 2\pi)$. For the case without projection, each observation is a realization of the rotated function $f_\frakg$ corrupted by white noise, $f_\frakg(t) \der t+\sigma \der W(t)$, where $\frakg\sim \lambda_*$ for some possibly non-Haar measure $\lambda_*$ on $\mathsf{SO}(2)$ and $\der W(t)$ denotes a standard Gaussian white noise process on $[0,2\pi)$. Writing 
\begin{align*}
\theta = (\theta^{(0)}, \theta^{(1)}_1, \theta^{(1)}_2, \ldots, \theta^{(L)}_1, \theta^{(L)}_2)\in\R^d,\quad d=2L+1,
\end{align*}
for the vector of Fourier coefficients, the rotation $\frakg$ acting on $f$ can be represented in the space of these coefficients by the matrix multiplication $\theta \mapsto g\theta$ where $g\in \G$ is the block-diagonal matrix 
\begin{align}
g = \mathcal{H}(\frakg)=\bigoplus_{l=0}^L \mathcal{H}^{(l)}(\frakg)\in \R^{d\times d}, \quad\text{ for }\frakg\in[0, 2\pi),\label{eq:MRA_rotation}
\end{align}
with 
\begin{align}\label{eq:MRA_rotation_detail}
\mathcal{H}^{(0)}(\frakg)=1 \text{ and } \mathcal{H}^{(l)}(\frakg)= \begin{pmatrix}
\cos l\frakg & -\sin l\frakg \\
\sin l\frakg & \cos l\frakg
\end{pmatrix}.
\end{align}
The observation model for the Fourier coefficients of $f$ is then a special case of \eqref{eq:projectedmodel} where $P =\Id$, $\G$ is isomorphic to $\mathsf{SO}(2)\cong [0,2\pi)$, and $\Lambda_*$ is the unique probability measure on $\G$ induced by the isomorphism between $\G$ and $[0,2\pi)$, with respect to $\lambda_*$. 

For the case with a two-fold projection, we consider the observations
\begin{align*}
(\Pi \circ f_\frakg)(t) \der t+\sigma \der W(t),
\end{align*}
where $(\Pi \circ f_\frakg)(t)= f_\frakg(t)+f_\frakg(1-t)$ representing the two-fold projection of $\mathcal{S}^1$ onto $(0,\pi)$ and $\der W(t)$ is a standard Gaussian white noise process on $(0,\pi)$. Expressing $\Pi \circ f$ in the space of Fourier coefficients, $\Pi$ corresponds to a linear map $P:\R^d\to\R^{\td}$ for $\td= L+1$ where $P  \theta = \sqrt{2} (\theta^{(0)},\theta^{(1)}_1,\ldots, \theta^{(L)}_1)$ (see \cite[Appendix C.3]{fan2024maximum}).

For both cases, the likelihood function can thus be written as 
\begin{align}\label{eq:likelihood_CMRA}
    p_{\theta}(y) = 
\int_0^{2\pi} \frac{1}{(2\pi\sigma^2)^{\td/2}}
\exp\left(-\frac{\|y-P \mathcal{H}(\frakg)\theta\|^2}{2\sigma^2}\right)\der
\lambda_*(\frakg),
\end{align}
where $P=\Id$ for the case without projection.

\subsubsection{Spherical Registration}\label{sec:model_SR}
We consider estimating a function on the unit sphere $\mathcal{S}^2$. The likelihood function can be written as 
\begin{align}\label{eq:likelihood_SR}
    p_{\theta}(y) = 
\int_{\mathsf{SO}(3)} \frac{1}{(2\pi\sigma^2)^{d/2}}
\exp\left(-\frac{\|y- \mathcal{D}(\frakg)\theta\|^2}{2\sigma^2}\right)\der
\lambda_*(\frakg).
\end{align}
See Appendix \ref{sec:model_SR_appendix} for details of the model and the definition of $\mathcal{D}$.

\subsubsection{Unprojected Cryo-EM and Cryo-ET}\label{sec:model_cryoET}

We consider estimating a function on $\R^3$ and the action of $\mathsf{SO}(3)$ on $\R^3$ is given by rotation about the origin. The likelihood function can be written as 
\begin{align}\label{eq:likelihood_cryoET}
    p_{\theta}(y) = 
\int_{\mathsf{SO}(3)} \frac{1}{(2\pi\sigma^2)^{d/2}}
\exp\left(-\frac{\|y- \check{\mathcal{D}}(\frakg)\theta\|^2}{2\sigma^2}\right)\der
\lambda_*(\frakg).
\end{align}
See Appendix \ref{sec:model_cryoET_appendix} for details of the model and the definition of $\check{\mathcal{D}}$.

\subsubsection{(Projected) Cryo-EM}\label{sec:model_cryoEM}

We extend the model from the previous section to include the tomographic projection that occurs in the practice of cryo-EM. As before, we aim to estimate a function on $\R^3$. In this projected model, the signal undergoes a rotation in $\mathsf{SO}(3)$ about the origin, followed by the tomographic projection. The observed samples are on the projection domain $\R^2$.
The likelihood function can be written as 
\begin{align}\label{eq:likelihood_cryoEM}
    p_{\theta}(y) = 
\int_{\mathsf{SO}(3)} \frac{1}{(2\pi\sigma^2)^{\td/2}}
\exp\left(-\frac{\|y- P\check{\mathcal{D}}(\frakg)\theta\|^2}{2\sigma^2}\right)\der
\lambda_*(\frakg).
\end{align}
See Appendix \ref{sec:model_cryoEM_appendix} for details of the model and the definitions of $P,\check{\mathcal{D}}$.

\subsection{Identifiability}\label{sec:identification}

According to the model~\eqref{eq:projectedmodel}, the parameter of interest $\theta_*$ is identifiable up to the distribution of the mixture centers $P g \theta_*$, where $g$ follows some possibly non-Haar probability measure $\Lambda_*$ over the compact group $\G$. 
To more precisely characterize the nature of this identifiability, 
we consider four settings that vary based on whether the group measure $\Lambda_*$ is known or unknown, whether it equals the Haar measure $\Lambda_0$, and whether $\theta_*$ is the sole parameter of interest. In all cases, $\theta_*$ is assumed to be unknown. 

For clarity, we first study the simplified setting where $P = \Id$. In this case, the statement that $\theta_*$ is identifiable up to the distribution of $g \theta_*$ (with $g \sim \Lambda_*$) can be further simplified in each setting:
\begin{itemize}
    \item[(1)] \emph{Estimating $\theta_*$ with known $\Lambda_*=\Lambda_0$.}
    The equality in distribution $g \theta \overset{d}{=} g \theta_*$ for $g \sim \Lambda_0$ holds if and only if $\orbit_{\theta} = \orbit_{\theta_*}$. In other words, $\theta_*$ is identifiable exactly up to its group orbit $\orbit_{\theta_*}$ under $\G$.

    \item[(2)] \emph{Estimating $\theta_*$ with  known  $\Lambda_*\neq \Lambda_0$. }
    The equality in distribution $g \theta \overset{d}{=} g \theta_*$ for $g \sim \Lambda_*$ holds if and only if $\theta=\theta_*$ for a generic choice of the pair $(\theta_*,\Lambda_*)$. In other words, under a generic measure $\Lambda_*$, a generic signal $\theta_*$ is point identified. 

    \item[(3)] \emph{Estimating the pair $(\theta_*, \Lambda_*)$ with $\Lambda_*$  unknown. }
    The equality in distribution $g' \theta \overset{d}{=} g \theta_*$ for $g \sim \Lambda_*,g'\sim \Lambda$ holds if and only if $\orbit_{(\theta,\Lambda)}=\orbit_{(\theta_*,\Lambda_*)}$ for a generic choice of the pair $(\theta_*,\Lambda_*)$. Here $\orbit_{(\theta,\Lambda)}:=\{(g\theta, \Lambda g^{-1}):g\in\G\}$   denotes the joint group orbit of the pair $(\theta,\Lambda)$ under the group action, where $\Lambda g^{-1}$ is the pushforward of the measure $\Lambda$ under right multiplication by $g^{-1}$, that is, for any measurable set $A \subseteq \G$, $(\Lambda g^{-1})(A) := \Lambda(A g)$. In other words, the pair $(\theta_*,\Lambda_*)$ is generically identifiable up to its joint group orbit under $\G$.

    \item[(4)] \emph{Estimating only $\theta_*$ with $\Lambda_*$  unknown.}
    For a generic pair $(\theta_*, \Lambda_*)$, since the group measure $\Lambda_*$ is not of direct inferential interest, the signal $\theta_*$ is  identifiable up to its group orbit $\orbit_{\theta_*}$ under $\G$.

\end{itemize}

More generally, when the projection $P \ne \Id$, the identifiability problem can become more intricate. The nature of identifiability—whether up to orbits or points—depends on the setting considered above, such as whether the group measure $\Lambda_*$ is known or needs to be estimated. However, once a nontrivial projection is introduced, additional non-identifiability may arise beyond this baseline. That is, in the orbit-identifiable regime, multiple distinct orbits may become indistinguishable after projection, and in the point-identifiable regime, multiple distinct signals may project to the same distribution. The extent of such ambiguity is determined by the interplay among the group $\G$, the projection operator $P$, and the group measure $\Lambda_*$ (see also~\cite{fan2024maximum}). For example, in the case of tomographic projection in cryo-EM (see Section~\ref{sec:model_cryoEM}) with the Haar measure, it is known that $Pg\theta = Pg\theta'$ even when $\theta' \neq \theta$, where $\theta'$ is the reflection of $\theta$ through a 2-D plane passing through the origin. As a result, $\theta_*$ is identifiable only up to chirality~\cite{bendory2020single}.

The identifiability claims made above are understood to hold for \textit{generic} instances of the signal $\theta_*$ and, when applicable, the group measure $\Lambda_*$. We briefly explain what this means. For the signal $\theta_* \in \mathbb{R}^d$, genericity is understood in the sense of real algebraic geometry:

\begin{definition}
A subset $H \subseteq \mathbb{R}^d$ is called \emph{generic} if its complement lies in the zero set of some non-zero real analytic function $f: \mathbb{R}^d \to \mathbb{R}^k$ for some $k \geq 1$. A statement is said to hold for \emph{generic} $\theta \in \mathbb{R}^d$ if it holds for all $\theta$ in some generic subset $H \subseteq \mathbb{R}^d$.
\end{definition}

This notion ensures that the exceptional set has Lebesgue measure zero~\cite[Prop.~0]{mityagin2015zero}. For the distribution $\Lambda_*$ over the group $\G$, genericity is typically imposed on its expansion coefficients with respect to a suitable basis adapted to the group structure. While we do not aim to establish the weakest possible generic conditions in full generality—especially since the precise form depends on the application at hand—concrete results are known in specific settings.
For example, in the discrete dihedral MRA model, $\Lambda_*$ admits a finite-dimensional representation in $\mathbb{R}^{2d}$, and genericity can be formulated in terms of this vector representation~\cite{bendory2022dihedral}. In the cryo-EM setting, $\Lambda_*$ can be expanded in terms of the complex Wigner D-matrix basis, and genericity is imposed on the corresponding expansion coefficients~\cite{sharon2020method}.

Since our focus lies in understanding the optimization landscape rather than establishing a complete identifiability theory, we refrain from formalizing genericity assumptions for the most general model. We refer interested readers to the aforementioned works for more detailed treatments in specific settings. The development of more broadly applicable conclusions is deferred to future work.

The four settings discussed above are not isolated, but rather closely related. Among them, Setting (1), which assumes the group measure $\Lambda_*=\Lambda_0$, is the most thoroughly understood from a theoretical standpoint (see e.g. \cite{fan2023likelihood, fan2024maximum, bandeira2023estimation}). However, in practice, the Haar assumption is often not satisfied, motivating interest in Settings (2) to (4), where the measure is non-Haar. Both Setting (1) and (2) are directly related to the correctly specified MLE analysis in Section \ref{sec:correct_MLE}. Setting (2) assumes $\Lambda_*$ is known; the more challenging and realistic scenarios are Settings (3) and (4), in which $\Lambda_*$ is unknown and possibly infinite-dimensional.

Between these two, Setting (4)—where the goal is to estimate only the signal $\theta_*$ without recovering $\Lambda_*$—is often of primary practical interest. This is particularly true in fields such as structural biology, where the distribution $\Lambda_*$ typically lacks direct scientific interpretation, and the main objective is to recover $\theta_*$, which represents the underlying biomolecular structure. This setting serves as the main motivation for our analysis, and Sections \ref{sec:landscape_without_projection} and \ref{sec:landscape_with_projection} focus on it from an optimization perspective.

Nonetheless, there is a direct connection between Settings (3) and (4): if one obtains a good estimate of the joint orbit of $(\theta_*, \Lambda_*)$ as in Setting (3), then projecting this estimate onto the signal domain naturally yields an estimator for the orbit of $\theta_*$ in the spirit of Setting (4). This connection underpins our study of the joint MLE, which we analyze in Section \ref{sec:joint_MLE}.

\subsection{Global Minimizers under Correct Specification}\label{sec:correct_MLE}

We present the following theorem, which characterizes the global minimizers of the correctly specified negative population log-likelihood $\ell(\theta;\Lambda_,\sigma^2)$. These minimizers comprise the set of parameters $\theta$ for which the distribution of $Pg\theta$ matches that of $Pg\theta_*$ under $g \sim \Lambda_*$, as naturally follows from the identification analysis.

\begin{theorem}\label{lem:GlobalMin_CorrectLikelihood}
The global minimizers of the negative population log-likelihood for the correctly specified model are given by $\{\theta\in\R^d: Pg\theta  \overset{d}{=} Pg\theta_*~\text{ for }~g\sim \Lambda_*\}$.
\end{theorem}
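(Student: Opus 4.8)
The plan is to reduce the statement to a computation with Kullback--Leibler divergence together with the injectivity of Gaussian convolution. First I would observe that, since the expectation in \eqref{eq:population_likelihood} is taken under the true data-generating density $p_{\theta_*}(\cdot;\Lambda_*,\sigma^2)$, for every $\theta$ one has the identity
\begin{align*}
\ell(\theta;\Lambda_*,\sigma^2)-\ell(\theta_*;\Lambda_*,\sigma^2)
= \E_{\theta_*,\Lambda_*}\log\frac{p_{\theta_*}(y;\Lambda_*,\sigma^2)}{p_\theta(y;\Lambda_*,\sigma^2)}
= \KL\!\left(p_{\theta_*}(\cdot;\Lambda_*,\sigma^2)\,\big\|\,p_\theta(\cdot;\Lambda_*,\sigma^2)\right).
\end{align*}
Before invoking this I would check that the two log-likelihood expectations are finite: because $\G$ is compact and $Pg\theta$ ranges over a bounded set for fixed $\theta$, each mixture density is bounded above by a constant and below by a Gaussian in $\|y\|$, so $\log p_\theta$ is integrable against $p_{\theta_*}$ (which has finite second moment).

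Next, using nonnegativity of the KL divergence and the fact that it vanishes if and only if the two densities agree almost everywhere, I would conclude that $\ell(\cdot;\Lambda_*,\sigma^2)$ is globally minimized, with minimum value $\ell(\theta_*;\Lambda_*,\sigma^2)$, exactly on the set $\{\theta: p_\theta(\cdot;\Lambda_*,\sigma^2)=p_{\theta_*}(\cdot;\Lambda_*,\sigma^2)\}$. In particular $\theta_*$ itself is a global minimizer, so the minimum value and the minimizing set are both identified in terms of equality of the observed densities.

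The crux is then to show that equality of these densities is equivalent to equality in distribution of the mixture centers. The point is that $p_\theta(\cdot;\Lambda_*,\sigma^2)$ is precisely the density of $Pg\theta+\sigma\eps$ with $g\sim\Lambda_*$ and $\eps\sim\mathrm{N}(0,\Id_{\td})$, i.e.\ the convolution of the law $\mu_\theta$ of $Pg\theta$ with $\mathrm{N}(0,\sigma^2\Id_{\td})$. Passing to characteristic functions, equality of the two convolved densities reads $\widehat{\mu_\theta}(t)\,e^{-\sigma^2\|t\|^2/2}=\widehat{\mu_{\theta_*}}(t)\,e^{-\sigma^2\|t\|^2/2}$ for all $t$. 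Since the Gaussian factor never vanishes, this forces $\widehat{\mu_\theta}=\widehat{\mu_{\theta_*}}$, hence $\mu_\theta=\mu_{\theta_*}$ by uniqueness of characteristic functions; the converse implication is immediate. This is exactly the statement $Pg\theta\overset{d}{=}Pg\theta_*$, completing the identification of the minimizer set as claimed.

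The main obstacle I anticipate is this deconvolution step---transferring equality of the observed mixture densities back to equality of the latent mixture-center distributions. It is here that the nonvanishing of the Gaussian characteristic function is essential: it guarantees that the Gaussian-smoothing map on probability measures is injective, so that no information about the mixing law is lost by the additive noise. The remaining ingredients (integrability of $\log p_\theta$, measurability of $\theta\mapsto\mu_\theta$, and the standard characterization of when KL vanishes) are routine given compactness of $\G$ and the explicit Gaussian form of the kernel.
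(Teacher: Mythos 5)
Your proof is correct, and its first half coincides with the paper's argument: both decompose $\ell(\theta;\Lambda_*,\sigma^2)-\ell(\theta_*;\Lambda_*,\sigma^2)$ as a Kullback--Leibler divergence and use its nonnegativity and vanishing criterion to reduce the problem to characterizing when $p_\theta(\cdot;\Lambda_*,\sigma^2)=p_{\theta_*}(\cdot;\Lambda_*,\sigma^2)$. Where you diverge is the crux you correctly identified: passing from equality of the observed Gaussian mixture densities to equality of the laws of the mixture centers $Pg\theta$ and $Pg\theta_*$. The paper handles this step by citing an external identifiability result (Theorem 2(2) of Nguyen, 2013, on mixing measures in Gaussian mixtures, invoked together with compactness of the orbit of centers), whereas you prove it directly: the mixture density is the convolution of the center law with $\mathrm{N}(0,\sigma^2\Id_{\td})$, and since the Gaussian characteristic function never vanishes, equality of the convolved densities forces equality of the characteristic functions of the center laws, hence of the laws themselves. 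Your route is more elementary and self-contained, and it also makes explicit the integrability of $\log p_\theta$ against the true density (guarding against an $\infty-\infty$ issue in the KL decomposition), a point the paper passes over silently; the paper's route is shorter and leans on a result that, beyond bare identifiability, provides quantitative Wasserstein-type control that could be reused elsewhere. Both are complete proofs of the stated theorem.
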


As an immediate consequence, when $\Lambda_* = \Lambda_0$, the set of global minimizers corresponds to the set of signals $\{\theta\in\R^d: Pg\theta  \overset{d}{=} Pg\theta_*~\text{ for }~g\sim \Lambda_0\}$. This set can be interpreted as an equivalence class of orbits $\orbit_\theta$ whose projections match that of the true signal $P(\orbit_{\theta})\equiv P(\orbit_{\theta_*})$, meaning that these sets are equal both as subsets of $\R^{\td}$ and in distribution: $Pg\theta \stackrel{d}{=} Pg\theta_*$ under $g \sim \Lambda_0$. In generic scenario, there exists only finitely many orbits $\orbit_\theta$ satisfying $P(\orbit_{\theta})\equiv P(\orbit_{\theta_*})$, yielding a form of generic \textit{list recovery} as characterized in \cite{bandeira2023estimation}. For more general, generic measures $\Lambda_*$, it typically identifies a set of equivalent points containing $\theta_*$, as discussed in the Section \ref{sec:identification}.

For the misspecified model, recall that the negative population log-likelihood for the misspecified model is defined in \eqref{eq:population_likelihood} as
\begin{align*}
\ell(\theta;\Lambda_0,\sigma^2) = -\E_{\theta_*, \Lambda_*} \log p_\theta(y; \Lambda_0,\sigma^2).
\end{align*}
Note that this objective is invariant under the group action: for any $g \in \G$, we have $p_{g\theta}(y; \Lambda_0,\sigma^2) = p_\theta(y; \Lambda_0,\sigma^2)$. As a result, the global minimizers of $\ell(\theta; \Lambda_0, \sigma^2)$ must form a union of orbits under the action of $\G$. However, there is generally no explicit or universal characterization of these global minimizers. Therefore, in the following sections, we analyze and discuss their structure under different scenarios, including cases with and without projection, as well as under high-noise and low-noise regimes.

\section{Theory}\label{sec:theory}
In this section, we present the main results of the paper. Our analysis is organized around two key settings. In Section~\ref{sec:landscape_without_projection}, we consider the \emph{unprojected case} where the projection operator is the identity ($P = \mathrm{Id}$). In this setting, we establish a strong robustness property of the MLE under group distribution misspecification. In Sections~\ref{sec:landscape_with_projection} and~\ref{sec:joint_MLE}, we turn to the more general and practically relevant \emph{projected case} ($P \neq \mathrm{Id}$), where misspecification induces bias. We characterize this bias and introduce a joint estimation framework to mitigate it when the true group distribution is unknown.

\subsection{Optimization Landscape Without Projection}\label{sec:landscape_without_projection}
In this section, we are going to show when there is no projection, the MLE still works when the group measure is misspecified as the Haar measure. 
Recall that the negative population log-likelihood for the misspecified model is defined in \eqref{eq:population_likelihood} as
\begin{align*}
\ell(\theta;\Lambda_0,\sigma^2) = -\E_{\theta_*, \Lambda_*} \log p_\theta(y; \Lambda_0,\sigma^2).
\end{align*}
To explicitly capture the dependence on $\Lambda_*$, which is embedded in the expectation as mentioned in Section \ref{sec:setup}, we introduce the more general notation: 
\begin{align}\label{eq:population_likelihood_general}
\ell(\theta;\Lambda_1,\Lambda_2,\sigma^2) := -\E_{\theta_*, \Lambda_2} \log p_\theta(y; \Lambda_1,\sigma^2),
\end{align}
where the expectation is taken under the model \eqref{eq:projectedmodel} with $\Lambda_*=\Lambda_2$. This formulation represents the negative log-likelihood when the data is generated according to the group measure $\Lambda_2$, while the model assumes the group measure $\Lambda_1$.

Under this notation, the negative log-likelihood for the correctly specified model is $\ell(\theta;\Lambda_*,\Lambda_*,\sigma^2)$, while that for the misspecified model is $\ell(\theta;\Lambda_0,\Lambda_*,\sigma^2)$. For simplicity, when the data is generated according to the true underlying group measure $\Lambda_*$, we always omit the $\Lambda_*$ in $\ell(\theta;\Lambda,\Lambda_*,\sigma^2)$ and use the shorthand notation $\ell(\theta;\Lambda,\sigma^2)$, which aligns with the definition \eqref{eq:population_likelihood}. According to Theorem \ref{lem:GlobalMin_CorrectLikelihood}, the global minimizers of the correctly specified log-likelihood $\ell(\theta;\Lambda_0,\Lambda_0,\sigma^2)$ are exactly those points whose projected orbits agree with the true projected orbit, i.e., $\{\theta\in\R^d: P(\orbit_\theta)\equiv P(\orbit_{\theta_*})\}$. We now focus on characterizing the optimization landscape and the global minimizers of the misspecified log-likelihood
$\ell(\theta;\Lambda_0,\Lambda,\sigma^2)$ under a general group measure $\Lambda$.

\subsubsection{Motivating Example: Symmetric Two-Component Gaussian Mixture} \label{sec:two_component}
For the purpose of illustration, we first consider a motivating example, where the data is generated from the symmetric two-component Gaussian mixture model on $\mathbb{R}$ as described in Section \ref{sec:model_2GM}. Recall that $\pi\in[0,1]$, $\theta_*\in\R$ are the true weight and center. The misspecified negative log-likelihood can be written as $\ell(\theta;1/2,\pi)$, where $\ell(\theta;\pi_1,\pi_2)$ is defined analogously to (\ref{eq:population_likelihood_general}):
\begin{align*}
\ell(\theta;\pi_1,\pi_2)= -\E_{\theta_*,\pi_2}\log\left[ \pi_1\frac{1}{\sqrt{2\pi\sigma^2}}
 \exp\left(-\frac{(y-\theta)^2}{2\sigma^2}\right)+(1-\pi_1)\frac{1}{\sqrt{2\pi\sigma^2}}
 \exp\left(-\frac{(y+\theta)^2}{2\sigma^2}\right)\right].
\end{align*}

Since $\theta$ is one-dimensional, we can visualize the landscape of  $\ell(\theta;1/2,\pi)$ with respect to $\pi$ and $\theta$ through a 3D surface plot (see the left panel of Figure \ref{fig:contour}), demonstrating that irrespective of the value of \(\pi\), the shape of the negative log-likelihood function with respect to \(\theta\) remains invariant. In addition, the minimum is consistently achieved at the ground truth $\theta_*$ and $-\theta_*$,  showing that the misspecified MLE works.

\begin{figure}
\centering
\begin{minipage}{.5\textwidth}
  \centering
  \includegraphics[width=\textwidth]{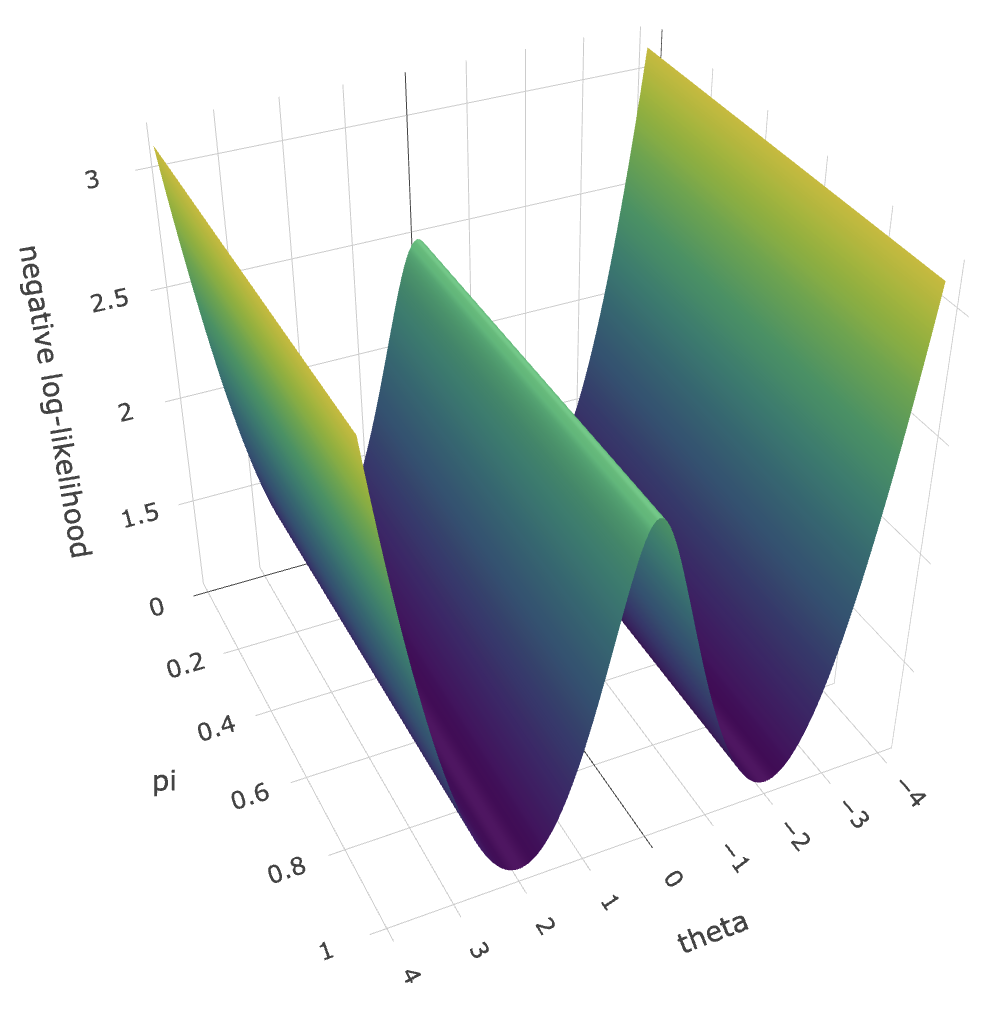}
\end{minipage}%
\begin{minipage}{.5\textwidth}
  \centering
  \includegraphics[width=\textwidth]{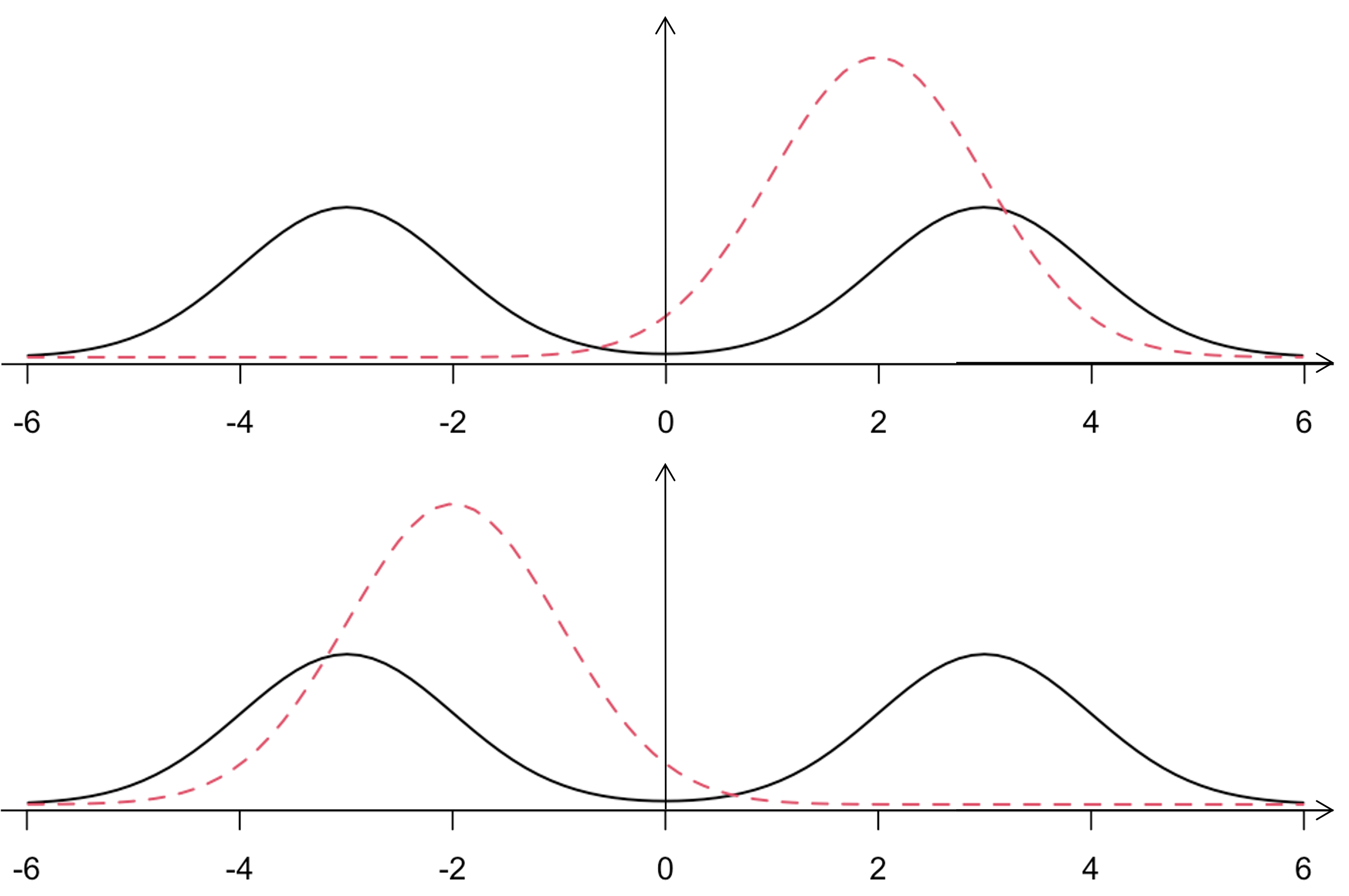}
\end{minipage}
\caption{Left: 3D surface plot of the negative log-likelihood function \(\ell(\theta;1/2,\pi)\) for the two-component Gaussian mixture with \(\theta_*=2\) with \(\theta\) and \(\pi\) varying. Right: Illustration of normal densities.}\label{fig:contour}
\end{figure}

The intuition behind the left panel of Figure \ref{fig:contour} is as follows. Note that the density function under the uniform prior, $  \frac{1}{2}\frac{1}{\sqrt{2\pi\sigma^2}}
\exp\left(-\frac{(y-\theta)^2}{2\sigma^2}\right)+\frac{1}{2}\frac{1}{\sqrt{2\pi\sigma^2}}
\exp\left(-\frac{(y+\theta)^2}{2\sigma^2}\right)$, is symmetric about 0 (illustrated as the black curves in the right panel of Figure \ref{fig:contour}). Together with the symmetry between $\mathrm{N}(\theta_*,\sigma^2)$ and $\mathrm{N}(-\theta_*,\sigma^2)$ (illustrated as the red dashed curves in the same panel), we have $\ell(\theta;1/2,0) =\ell(\theta;1/2,1)$. As a result, $\ell(\theta;1/2,\pi)$, which is equal to $\pi\ell(\theta;1/2,1)+(1-\pi)\ell(\theta;1/2,0)$,  is invariant of $\pi$.

\subsubsection{General Case}
Despite the specific instance shown in Figure \ref{fig:contour}, which is limited to one-dimensional, symmetric two-component Gaussian mixture models, the conclusion extends to more general settings. In fact, the following result shows that, in the absence of projection, the misspecified population log-likelihood is invariant to the choice of the underlying group distribution.

\begin{theorem}\label{thm:withoutProj}
Suppose there is no projection, i.e., $P=\Id$. Then for any group measure $\Lambda$ on $\G$ and noise level $\sigma^2$, we have 
\begin{align*}
\ell(\theta;\Lambda_0,\Lambda,\sigma^2)=\ell(\theta;\Lambda_0,\Lambda_0,\sigma^2).
\end{align*}
In particular, this holds for the true underlying group measure: 
\begin{align*}
\ell(\theta;\Lambda_0,\Lambda_*,\sigma^2)=\ell(\theta;\Lambda_0,\Lambda_0,\sigma^2)
\end{align*}
\end{theorem}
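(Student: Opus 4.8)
The plan is to exploit a single structural fact: when $P=\Id$, the misspecified (Haar-averaged) density $y\mapsto p_\theta(y;\Lambda_0,\sigma^2)$ is invariant under the $\G$-action on its argument. Writing $F_\theta(y):=\log p_\theta(y;\Lambda_0,\sigma^2)$, I would first establish that $F_\theta(qy)=F_\theta(y)$ for every $q\in\G$. This follows by substituting into the integral defining $p_\theta(\cdot;\Lambda_0,\sigma^2)$: since $q$ is orthogonal, $\|qy-g\theta\|=\|y-q^{-1}g\theta\|$, and the change of variables $g\mapsto qg$ together with the left-invariance of the Haar measure $\Lambda_0$ restores the integral to its original form. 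Thus Haar averaging symmetrizes the density into a genuinely $\G$-invariant function of $y$.

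The second step reduces the noise expectation to one that no longer sees the data-generating group element. Under the model a sample is $y=h\theta_*+\sigma\eps$ with $h\sim\Lambda$ and $\eps\sim\mathrm{N}(0,\Id_d)$. For each fixed $h\in\G$ I would write $h\theta_*+\sigma\eps=h(\theta_*+\sigma h^{-1}\eps)$ and apply the invariance from Step~1 with $q=h$ to get $F_\theta(h\theta_*+\sigma\eps)=F_\theta(\theta_*+\sigma h^{-1}\eps)$. Because $h$ is orthogonal and the law $\mathrm{N}(0,\Id_d)$ is rotationally invariant, $h^{-1}\eps\overset{d}{=}\eps$, so $\E_\eps[F_\theta(h\theta_*+\sigma\eps)]=\E_\eps[F_\theta(\theta_*+\sigma\eps)]$, a quantity independent of $h$. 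Combining the two steps, $\ell(\theta;\Lambda_0,\Lambda,\sigma^2)=-\E_{h\sim\Lambda}\E_\eps[F_\theta(h\theta_*+\sigma\eps)]=-\E_\eps[F_\theta(\theta_*+\sigma\eps)]$, which is manifestly independent of $\Lambda$. Specializing $\Lambda=\Lambda_0$ yields the first displayed equality and $\Lambda=\Lambda_*$ the second.

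I do not anticipate a serious technical obstacle here; the content lies entirely in correctly identifying and combining the two symmetries, and in checking the integrability needed to exchange the $h$- and $\eps$-expectations (which holds since $F_\theta$ is bounded above and has at most quadratic growth in $\|y\|$, so the expectations are finite). The one point requiring care is making sure the invariance in Step~1 is only invoked for $q\in\G$, which is legitimate precisely because $h\sim\Lambda$ is supported on $\G$.

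The genuinely load-bearing step is Step~2, and it is exactly where the hypothesis $P=\Id$ is essential: the cancellation relies on being able to pull $h$ out of the argument of $F_\theta$ via $\G$-invariance and then absorb $h^{-1}$ into the isotropic noise. When a nontrivial projection is present, the averaged density is an integral over $\{Pg\theta:g\in\G\}\subset\R^{\td}$, and there is no orthogonal action on the observation space $\R^{\td}$ under which this density is invariant; the symmetry of Step~1 is therefore unavailable, and this failure is precisely the source of the bias analyzed in Section~\ref{sec:landscape_with_projection}.
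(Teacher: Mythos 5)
Your proof is correct and is essentially the paper's own argument recast in probabilistic language: your Step~1 (the $\G$-invariance $F_\theta(qy)=F_\theta(y)$ via Haar translation invariance plus orthogonality) is exactly the chain of equalities in the paper's proof, and your Step~2 (absorbing $h^{-1}$ into the isotropic noise via $h^{-1}\eps\overset{d}{=}\eps$) is precisely the paper's change of variables $y\mapsto \tg y$ against the Gaussian density centered at $\tg\theta_*$. There are no gaps; your integrability check and the care in invoking the invariance only for $q\in\G$ match what the paper's Fubini-based manipulation implicitly requires.
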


\begin{proof}
Recall that by Fubini's theorem, we can change the order of the integrals, yielding that
\begin{align}\label{eq:withoutProj}
\ell(\theta;\Lambda_0,\Lambda,\sigma^2)=&-\E_{\theta_*,\Lambda} \log p_\theta(y;\Lambda_0,\sigma^2)\notag\\
=&-\int_{\G}\Big[\int_{\R^d} \log p_\theta(y;\Lambda_0,\sigma^2) \cdot  \frac{1}{(2\pi\sigma^2)^{d/2}}
\exp\Big(-\frac{\|y- \tg\theta_*\|^2}{2\sigma^2}\Big)\der y\Big]\der \Lambda(\tg).
\end{align}
We now turn to show the quantity inside the square brackets does not depend on $\tg\in\G$. Note that for any measurable function $h:\R^d\to\R$ and $\tg\in\G$, $y\in\R^d$,
\begin{align*}
\int_{\G} h\Big(g^\T \tg y\Big)\der \Lambda_0(g)=\int_{\G} h\Big((g\tg^{-1})^\T y\Big)\der \Lambda_0(g) = \int_{\G} h\Big(g^\T y\Big)\der \Lambda_0(g),
\end{align*}
where the last equation follows from the fact that the Haar measure is invariant under the group action, i.e., if $g\sim \Lambda_0$ then $g\tg^{-1}\sim \Lambda_0$. Additionally, since $\tg\in\G$ is an orthogonal matrix, we further have $\int_{\R^d}h(\tg x)\der x = \int_{\R^d} h(x)\der x$ and $\|\tg x\|=\|x\|$ for any $x\in\R^d$. Consequently, for any $\tg\in\G$ and $\tilde\theta\in\R^d$,
\begin{align*}
& \int_{\R^d} \log p_\theta(y;\Lambda_0,\sigma^2)  \cdot 
\exp\Big(-\frac{\|y- \tg \tilde\theta\|^2}{2\sigma^2}\Big)\der y\\
=&\int_{\R^d} \Big[\log\int_{\G} \frac{1}{(2\pi\sigma^2)^{d/2}}
\exp\Big(-\frac{\|y- g\theta\|^2}{2\sigma^2}\Big)\der
\Lambda_0(g)\Big]\cdot 
\exp\Big(-\frac{\|y- \tg \tilde\theta\|^2}{2\sigma^2}\Big)\der y\\
=&\int_{\R^d} \Big[\log\int_{\G} \frac{1}{(2\pi\sigma^2)^{d/2}}
\exp\Big(-\frac{\|\tg y- g\theta\|^2}{2\sigma^2}\Big)\der
\Lambda_0(g)\Big]\cdot 
\exp\Big(-\frac{\|\tg y- \tg \tilde\theta\|^2}{2\sigma^2}\Big)\der y\\
=&\int_{\R^d} \Big[\log\int_{\G} \frac{1}{(2\pi\sigma^2)^{d/2}}
\exp\Big(-\frac{\|g^\T\tg y- \theta\|^2}{2\sigma^2}\Big)\der
\Lambda_0(g)\Big]\cdot 
\exp\Big(-\frac{\|y- \tilde\theta\|^2}{2\sigma^2}\Big)\der y\\
=&\int_{\R^d} \Big[\log\int_{\G} \frac{1}{(2\pi\sigma^2)^{d/2}}
\exp\Big(-\frac{\|y- g\theta\|^2}{2\sigma^2}\Big)\der
\Lambda_0(g)\Big]\cdot 
\exp\Big(-\frac{\|y- \tilde\theta\|^2}{2\sigma^2}\Big)\der y,
\end{align*}
which does not depend on $\tg$ as desired. Hence, we can replace $\Lambda$ by $\Lambda_0$ in \eqref{eq:withoutProj} and deduce that 
\begin{align*}
\ell(\theta;\Lambda_0,\Lambda,\sigma^2)=&-\int_{\G}\Big[\int_{\R^d} \log p_\theta(y;\Lambda_0,\sigma^2) \cdot  \frac{1}{(2\pi\sigma^2)^{d/2}}
\exp\Big(-\frac{\|y- \tg\theta_*\|^2}{2\sigma^2}\Big)\der y\Big]\der \Lambda(\tg)\\
=&-\int_{\G}\Big[\int_{\R^d} \log p_\theta(y;\Lambda_0,\sigma^2) \cdot  \frac{1}{(2\pi\sigma^2)^{d/2}}
\exp\Big(-\frac{\|y- \tg\theta_*\|^2}{2\sigma^2}\Big)\der y\Big]\der \Lambda_0(\tg)\\
=&\ell(\theta;\Lambda_0,\Lambda_0,\sigma^2),
\end{align*}
which completes the proof.
\end{proof}

\begin{remark}
The proof of Theorem~\ref{thm:withoutProj} actually extends beyond the Gaussian noise setting. The argument applies more generally to any isotropic noise distribution for which the conditional density of the observation $y$ given $g \in \G$ and parameter $\theta$ depends only on the Euclidean distance $\|y - g\theta\|$. In such cases, the invariance and change-of-variable arguments remain valid, and the conclusion of the theorem continues to hold.
\end{remark}

We immediately have the following corollary, characterizing the global minimizers of the misspecified log-likelihood.
\begin{corollary}\label{cor:withoutProj}
Suppose there is no projection, i.e., $P=\Id$. The negative log-likelihoods $\ell(\theta;\Lambda_0,\Lambda_*,\sigma^2)$ and $\ell(\theta;\Lambda_0,\Lambda_0,\sigma^2)$ have exactly the same optimization landscape. In particular, the orbit of $\theta_*$ contains all the global minimizers of misspecified log-likelihood $\ell(\theta;\Lambda_0,\Lambda_*,\sigma^2)$.
\end{corollary}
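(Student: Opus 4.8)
The plan is to obtain Corollary~\ref{cor:withoutProj} as a short chain linking Theorem~\ref{thm:withoutProj} to the correctly-specified characterization in Theorem~\ref{lem:GlobalMin_CorrectLikelihood}. First I would apply Theorem~\ref{thm:withoutProj} with the specific choice $\Lambda = \Lambda_*$, giving the pointwise identity $\ell(\theta;\Lambda_0,\Lambda_*,\sigma^2) = \ell(\theta;\Lambda_0,\Lambda_0,\sigma^2)$ for every $\theta\in\R^d$. Since the two objectives coincide as functions of $\theta$, they are literally one and the same function; hence they share every landscape feature—identical values, gradients, critical points, and in particular the identical set of global minimizers. This settles the first assertion of the corollary with no further work.

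Next I would identify the global minimizers of the right-hand objective $\ell(\theta;\Lambda_0,\Lambda_0,\sigma^2)$. The key observation is that this is the \emph{correctly specified} negative population log-likelihood in which both the data-generating measure and the model measure equal the Haar measure $\Lambda_0$. I would therefore invoke Theorem~\ref{lem:GlobalMin_CorrectLikelihood} with $\Lambda_*$ taken to be $\Lambda_0$, concluding that the global minimizers form the set $\{\theta\in\R^d : Pg\theta \overset{d}{=} Pg\theta_* \text{ for } g\sim\Lambda_0\}$. Specializing to the present hypothesis $P=\Id$, this becomes $\{\theta\in\R^d : g\theta \overset{d}{=} g\theta_* \text{ for } g\sim\Lambda_0\}$.

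It then remains to translate this distributional-equality set into a statement about the orbit $\orbit_{\theta_*}$. Here I would appeal to the identifiability fact recorded as Setting~(1) in Section~\ref{sec:identification}: under the Haar measure and in the absence of projection, $g\theta \overset{d}{=} g\theta_*$ for $g\sim\Lambda_0$ holds if and only if $\orbit_\theta = \orbit_{\theta_*}$, that is, $\theta\in\orbit_{\theta_*}$. The forward direction of this equivalence shows every global minimizer lies in $\orbit_{\theta_*}$, which is exactly the claimed containment; the reverse direction, together with the $\G$-invariance of the objective noted in Section~\ref{sec:correct_MLE} (namely $p_{g\theta}(y;\Lambda_0,\sigma^2) = p_\theta(y;\Lambda_0,\sigma^2)$), in fact upgrades this to an equality of the minimizer set with the full orbit.

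I expect the only genuinely substantive ingredient to be this last identifiability step, where the symmetry of the Haar-averaged mixture is what rules out spurious minimizers outside $\orbit_{\theta_*}$; the remaining steps are formal, inheriting landscape equality from Theorem~\ref{thm:withoutProj} and the minimizer characterization from Theorem~\ref{lem:GlobalMin_CorrectLikelihood}. The one point I would be careful about is that the reduction hinges on $\ell(\theta;\Lambda_0,\Lambda_0,\sigma^2)$ being genuinely correctly specified—its true measure coinciding with the model measure $\Lambda_0$—so that Theorem~\ref{lem:GlobalMin_CorrectLikelihood} applies verbatim with $\Lambda_*$ replaced by $\Lambda_0$.
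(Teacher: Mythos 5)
Your proposal is correct and follows what is essentially the paper's own (unwritten) derivation: the paper presents the corollary as immediate from Theorem~\ref{thm:withoutProj}, and the natural chain is exactly yours—specialize Theorem~\ref{thm:withoutProj} to $\Lambda=\Lambda_*$, characterize the minimizers of $\ell(\theta;\Lambda_0,\Lambda_0,\sigma^2)$ via Theorem~\ref{lem:GlobalMin_CorrectLikelihood} with the Haar measure, and convert the distributional equality $g\theta \overset{d}{=} g\theta_*$ (for $g\sim\Lambda_0$, $P=\Id$) into orbit membership via the Setting~(1) identifiability fact. Your added observation that $\G$-invariance upgrades the containment to equality of the minimizer set with the full orbit is consistent with, and slightly sharpens, the stated conclusion.
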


\begin{remark}
This corollary leads to a surprising yet intuitive conclusion: without projection, we do not even need to know the true data-generating group measure $\Lambda_*$. By optimizing the misspecified MLE objective, one can still recover the true orbit. Remarkably, this result holds for any group measure, including those with only local support.
\end{remark}

\begin{remark}
Corollary~\ref{cor:withoutProj} implies that the sample misspecified MLE, the global minimizer of $\ell_n(\theta;\Lambda_0,\sigma^2)$, is consistent. This follows from standard M-estimation theory  \citep{van2000asymptotic}: since $\ell_n(\theta;\Lambda_0,\sigma^2)$ is the finite-sample version of of the population objective $\ell(\theta;\Lambda_0,\Lambda_*,\sigma^2)$, under mild regularity conditions, its global minimizer converges to that of the population objective as the sample size $n \to \infty$.

Nevertheless, misspecification does come with certain costs. One is identifiability: $\theta_*$ is point identified in correctly specified MLE but is only identifiable up to orbit in misspecified MLE (see Section \ref{sec:identification}). Another important cost concerns efficiency. Although the global minimizers of the misspecified likelihood coincide with the true orbit corresponding to the global minimizer of the correctly specified likelihood, the two likelihood functions are generally different—that is, $\ell(\theta; \Lambda_0, \Lambda_*, \sigma^2)$ typically differs from $\ell(\theta; \Lambda_*, \Lambda_*, \sigma^2)$, with the former often exhibiting flatter curvature at the global minimizers. This implies that the misspecified MLE is asymptotically less efficient. Figure~\ref{fig:fisher} illustrates this phenomenon in the symmetric two-component Gaussian mixture example discussed in Section~\ref{sec:two_component}.
\end{remark}

\begin{figure}
\centering
\begin{minipage}{.5\textwidth}
  \centering
  \includegraphics[width=\textwidth]{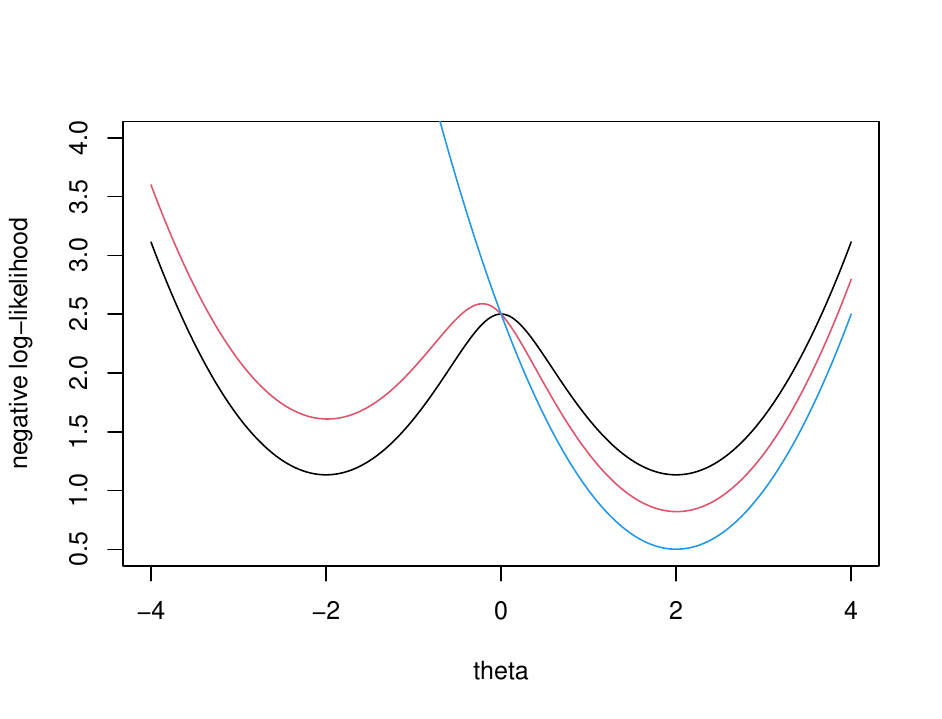}
\end{minipage}%
\begin{minipage}{.5\textwidth}
  \centering
  \includegraphics[width=\textwidth]{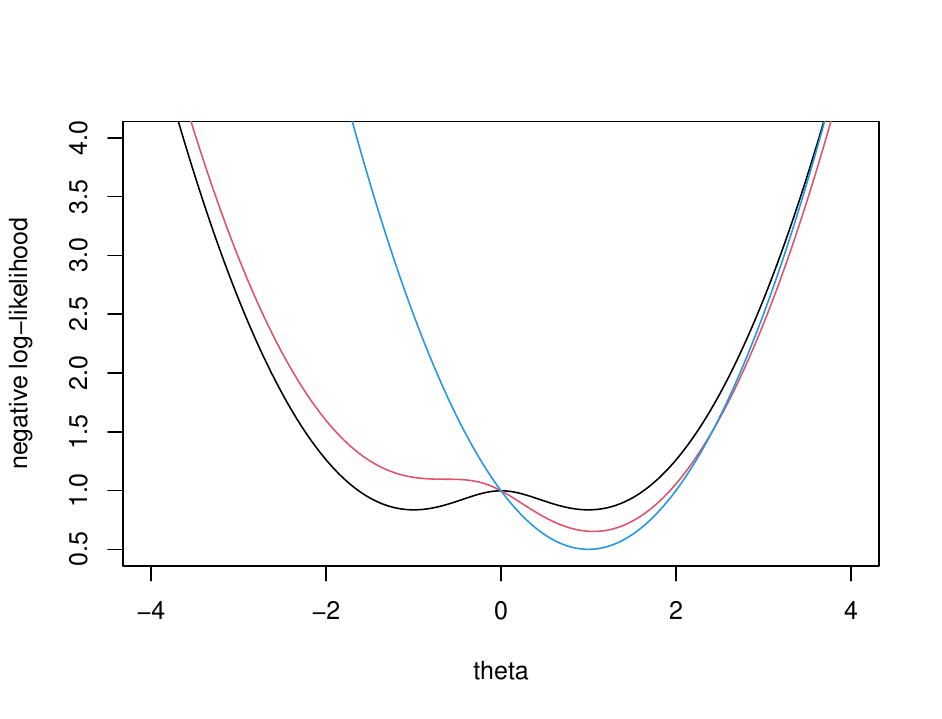}
\end{minipage}
\caption{Left: Plots of the negative log-likelihood function $\ell(\theta; \pi, \pi)$ for the symmetric two-component Gaussian mixture with $\theta_* = 2$ and varying $\pi$. Black: $\pi = 0.5$; Red: $\pi = 0.7$; Blue: $\pi = 1$. The global minimum occurs only at $\theta_*$ for the red and blue curves and at both $\pm\theta_*$ for the black curve. Since $\ell(\theta; \pi, \pi)$ corresponds to the correctly specified MLE, its curvature at $\theta_*$ reflects asymptotic statistical efficiency: the flatter the curve at $\theta_*$, the lower the efficiency. Among the three, the black curve is the flattest. The misspecified MLE $\ell(\theta; 0.5, \pi)$ shares the same landscape as $\ell(\theta; 0.5, 0.5)$, and thus has the same lower efficiency, making it less efficient than the correctly specified MLE $\ell(\theta; \pi, \pi)$.
Right: The difference in curvature at $\theta_*$ becomes more significant when  $\theta_*=1$.}\label{fig:fisher}
\end{figure}

\subsection{Global Minimizers With Projection}\label{sec:landscape_with_projection}
In this section, we analyze the properties of the misspecified population negative log-likelihood in the presence of projection, with a focus on the case where the group $\G$ is a continuous group. While some of our results also hold or have analogues in the discrete group setting, we defer discussion of those differences to the remarks following the main theorems. Recall that the negative population log-likelihood for the misspecified model with projection has the form 
\begin{align}\label{eq:loss_with_projection}
\ell(\theta;\Lambda_0,\sigma^2)=-\E_{\theta_*,\Lambda_*}\log\int_{\G} \frac{1}{(2\pi\sigma^2)^{\td/2}}
\exp\left(-\frac{\|y- P g\theta\|^2}{2\sigma^2}\right)\der
\Lambda_0(g).
\end{align}
Define the misspecified MLE as
\begin{align*}
\hat\theta(\Lambda_0,\sigma^2)=&\argmin_{\theta} \ell(\theta;\Lambda_0,\sigma^2)\\
=&\argmax_{\theta} \E_{\theta_*, \Lambda_*} \log p_\theta(y; \Lambda_0,\sigma^2).
\end{align*}
As discussed in Section~\ref{sec:correct_MLE}, the global minimizers of $\ell(\theta; \Lambda_0, \sigma^2)$ are not unique in general, but rather form a union of orbits under the group action of $\G$; that is, $\ell(\theta; \Lambda_0, \sigma^2)$ attains the same value for all points within the same orbit. Accordingly, $\hat\theta(\Lambda_0,\sigma^2)$ should be understood as an element in this orbit set of global minimizers.

Specifically, in Section~\ref{sec:existence_bias}, we show that for a generic noise level, there always exists a data-generating group measure under which the true signal orbit $\orbit_{\theta_*}$ fails to minimize $\ell(\theta;\Lambda_0,\sigma^2)$, thereby revealing the intrinsic bias of the misspecified MLE. This bias is further characterized in Section~\ref{sec:quantification_bias} via a one-sided Hausdorff distance between the true \textit{projected orbit} and the \textit{projected orbit} associated with the misspecified MLE, which quantifies how far the misspecified MLE deviates from the ground truth in the high signal-to-noise regime (see \eqref{eq:proj_orbit} for the definition of projected orbit). Finally, in Section~\ref{sec:bias_sigma_0}, we strengthen this result by showing that the full Hausdorff distance between the two projected orbits vanishes as $\sigma \to 0$, providing theoretical justification for the use of the misspecified MLE in low-noise settings.

\subsubsection{Existence of Bias}\label{sec:existence_bias}

We aim to establish the existence of estimation bias under a generic noise level in the projected model. Specifically, we show that the true orbit $\orbit_{\theta_*}$ does not lie within the set of global minimizers of the misspecified population log-likelihood. Following the discussion in Section \ref{sec:correct_MLE}, this is equivalent to showing that the true signal $\theta_*$ itself is not a global minimizer of $\ell(\theta; \Lambda_0, \sigma^2)$. To this end, it suffices to show that the gradient of the objective is non-zero at $\theta_*$.

Let $\G$ be a continuous and connected group. Our main technical tool is the identity theorem for real analytic functions. To apply this theorem, we need to introduce analytic structure into the likelihood landscape. This motivates a reparameterization of the group $\G$ via a real-analytic surjection from a subset of Euclidean space. Specifically, assume that there exist a surjective function $\eta:U\mapsto \G$ and a measure $\tilde\lambda_0$ on $U$ such that 
\begin{itemize}
    \item[(1)] $U$ is a connected bounded subset of $\R^m$ for some positive $m$;
    \item[(2)] $U$ has non-empty interior with respect to $\R^m$;
    \item[(3)] $\eta$ is measurable with respect to $\tilde\lambda_0$ and the corresponding pushforward measure is the Haar measure $\Lambda_0$ over $\G$.
\end{itemize}
The assumption of such a surjective parametrization is natural and is satisfied in practical settings, as will be evident in the examples of projected MRA and cryo-EM discussed later. With this setup, for $v\in U$, $\theta\in\R^d$, and $\tau^2$, we define the function
\begin{align*}
H(v;\theta,\tau^2)=\E_{y\sim \mathrm{N}(P\eta(v)\theta,\tau^2\Id_{\td})}  \frac{\int_{U}
\exp\Big(-\frac{\|y- P \eta(u)\theta\|^2}{2\tau^2}\Big) \cdot \eta(u)^\T P^\T \Big(P \eta(u)\theta-y\Big)\der
\tilde\lambda_0(u)}{\int_{U}
\exp\Big(-\frac{\|y- P \eta(u)\theta\|^2}{2\tau^2}\Big)\der
\tilde\lambda_0(u)}.
\end{align*}
This expression captures the core integrand structure underlying the population gradient of the misspecified log-likelihood, expressed in local coordinates, and its analytic properties plays a central role in our argument. Notably, the integrand of the expectation in the display above is independent of $\eta(v)$ and is real analytic in $(\theta,\tau^2)$. Therefore, if $\eta(v)$ is real analytic in some domain $\tilde{U}\subseteq U$, it follows that $H(v;\theta,\tau^2)$ is also real analytic in $\tilde{U}\times \R^d\times \R_+$.

We now present the following main theorem in this section, which demonstrates that in the presence of projection, the misspecified negative log-likelihood generally induces a bias. In other words, the estimator obtained by maximizing the misspecified likelihood—using the Haar measure in place of the true group distribution—is not guaranteed to recover the true signal.

\begin{theorem}\label{thm:bias_existence}
Suppose there exists an open connected subset $\tilde{U}\subseteq U$ such that $\eta$ is entrywise real analytic in $\tilde{U}$. If there exist some $u_0\in\tilde{U}$, $\theta_0\in\R^d$, and $\tau_0^2$ such that $H(u_0;\theta_0,\tau_0^2)\not=0$, then for generic $\theta_*\in\R^d\setminus\{0\}$ and $\sigma^2\in\R_+$, there exists a group measure $\Lambda_*$ on $\G$ such that $\nabla_{\theta}\ell(\theta;\Lambda_0,\sigma^2)|_{\theta=\theta_*}\not=0$. In other words, $\theta_*$ is not a global minimizer of the misspecified negative population log-likelihood.
\end{theorem}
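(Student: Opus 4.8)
The plan is to reduce the claim to the non-vanishing of the single function $H$ and then to promote the one-point hypothesis $H(u_0;\theta_0,\tau_0^2)\neq 0$ to generic non-vanishing in $(\theta_*,\sigma^2)$ via real analyticity. The structural fact that makes this work is that the population gradient $\nabla_\theta\ell(\theta;\Lambda_0,\Lambda_*,\sigma^2)$ is \emph{linear} in the data-generating measure $\Lambda_*$: writing the outer expectation over $y$ as an integral of $\mathrm{N}(P\tg\theta_*,\sigma^2\Id_{\td})$ against $\der\Lambda_*(\tg)$ and exchanging it with $\nabla_\theta$, the gradient becomes a $\Lambda_*$-average of per-element contributions. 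It therefore suffices to exhibit a single group element whose contribution is nonzero and place a point mass there.

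First I would compute the gradient explicitly. Since the mixture density $p_\theta(y;\Lambda_0,\sigma^2)$ is strictly positive with Gaussian tails, differentiation commutes with the outer expectation and the inner Haar integral, giving $\nabla_\theta\ell=-\E_{\theta_*,\Lambda_*}\nabla_\theta\log p_\theta(y;\Lambda_0,\sigma^2)$, and expanding $\nabla_\theta\log p_\theta$ produces exactly the quotient of integrals appearing in the definition of $H$. Reparameterizing the inner Haar integral through $\eta$ (using that the pushforward of $\tilde\lambda_0$ is $\Lambda_0$), I would verify that when the data is generated from the point mass $\delta_{\eta(v)}$ the gradient at $\theta$ equals $\sigma^{-2}H(v;\theta,\sigma^2)$. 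By the linearity above, under a general $\Lambda_*$ the gradient at $\theta=\theta_*$ is the $\Lambda_*$-average of $\sigma^{-2}H(v;\theta_*,\sigma^2)$ over the support.

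With this reduction I would take $\Lambda_*=\delta_{\eta(u_0)}$, the point mass at the element furnished by the hypothesis, so that $\nabla_\theta\ell(\theta;\Lambda_0,\sigma^2)\big|_{\theta=\theta_*}=\sigma^{-2}H(u_0;\theta_*,\sigma^2)$. It then remains to show $H(u_0;\theta_*,\sigma^2)\neq 0$ for generic $(\theta_*,\sigma^2)$. Restricting the jointly real-analytic map $H$ on $\tilde{U}\times\R^d\times\R_+$ (as recorded just before the theorem) to the slice $v=u_0\in\tilde{U}$, the map $(\theta,\tau^2)\mapsto H(u_0;\theta,\tau^2)$ is real analytic on the connected set $\R^d\times\R_+$ and, by assumption, does not vanish at $(\theta_0,\tau_0^2)$, hence is not identically zero. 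By the identity theorem for real analytic functions its zero set has Lebesgue measure zero; more precisely, the complement of $\{(\theta_*,\sigma^2):H(u_0;\theta_*,\sigma^2)\neq 0\}$ is exactly the zero locus of the nonzero real analytic map $H(u_0;\cdot,\cdot)$, which is precisely a non-generic set in the sense of Section~\ref{sec:identification}. The degenerate point $\theta_*=0$, at which $H$ vanishes identically because the Gaussian kernels no longer depend on the group element and the integrand reduces to a zero-mean linear function of $y$, is harmlessly absorbed into this exceptional set.

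I expect the main obstacle to be making the gradient reduction watertight rather than the analyticity step: one must justify interchanging $\nabla_\theta$ with both the expectation over $y$ and the inner integral over $\G$, and confirm the quotient integrand is integrable against the Gaussian so that $H(u_0;\cdot,\cdot)$ is genuinely real analytic in $(\theta,\tau^2)$ (here the boundedness of $U$ and Gaussian tail decay do the work). A secondary point is bookkeeping the paper's notion of genericity, which demands the exceptional set lie in the zero set of one nonzero real analytic function on $(\R^d\setminus\{0\})\times\R_+$; taking that function to be $H(u_0;\cdot,\cdot)$ (or a single nonvanishing component), together with connectedness of the domain so the identity theorem applies globally, closes the argument. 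Finally, since $\tg\mapsto\sigma^{-2}H(\eta^{-1}(\tg);\theta_*,\sigma^2)$ is continuous, the gradient depends weakly continuously on $\Lambda_*$, so the same conclusion persists for non-degenerate $\Lambda_*$ supported near $\eta(u_0)$, should one wish to avoid a point mass.
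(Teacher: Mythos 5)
Your proposal is correct and follows essentially the same route as the paper's proof: both express the population gradient as a $\Lambda_*$-average of the function $H$ (via the pushforward reparameterization through $\eta$) and then invoke the identity theorem for real analytic functions to upgrade the one-point hypothesis to generic non-vanishing. The only difference is in the final step — the paper applies the identity theorem jointly in $(v,\theta,\tau^2)$ on $\tilde{U}\times\R^d\times\R_+$ and then uses continuity of $H$ in $v$ to construct a suitable measure $\tilde\lambda$ whose pushforward serves as $\Lambda_*$, whereas you fix the slice $v=u_0$, take $\Lambda_*=\delta_{\eta(u_0)}$, and apply the identity theorem in $(\theta,\tau^2)$ alone; your variant is slightly more direct, makes the genericity bookkeeping in $(\theta_*,\sigma^2)$ cleaner (the exceptional set is exactly the zero locus of the single analytic map $H(u_0;\cdot,\cdot)$), and your closing remark about measures supported near $\eta(u_0)$ recovers the paper's extra flexibility of using non-degenerate $\Lambda_*$.
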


\paragraph{Projected Continuous MRA} Consider the example of Section \ref{sec:model_MRA} with the group element $g$ given by \eqref{eq:MRA_rotation}. In this case, we have the $U=[0,2\pi)$ and $\eta(u)=\mathcal{H}(u)$, the measure $\tilde\lambda_0$ corresponds to the uniform measure on $[0,2\pi)$. Moreover, $\eta$ is real analytic on $\tilde{U}=U^\circ=(0,2\pi)$. The existence of $u_0$, $\theta_0$, and $\tau_0^2$ satisfying the conditions of Theorem~\ref{thm:bias_existence} can be verified numerically. When $L = 1$, we can take $u_0 = 0$, $\theta_0 = (0,2, 1)^\T$, and $\tau_0^2 = 1$, and numerically compute $H(u_0;\theta_0,\tau_0^2)\approx (-0.21,-0.10)^\T$, which is non-zero. For the case $L > 1$, we can use the same values for $u_0$ and $\tau_0^2$, and set $\theta_0 \in \mathbb{R}^{2L+1}$ such that its first three coordinates are $0,2,1$, and the remaining entries are zero. With this construction, the value of $H(u_0; \theta_0, \tau_0^2)$ remains exactly the same as in the $L = 1$ case, and thus is also non-zero.
Therefore, the conclusion of Theorem~\ref{thm:bias_existence} holds: $\theta_*$ is not a global minimizer of the misspecified negative population log-likelihood in the projected continuous MRA model. To further illustrate this, the right panel of Figure~\ref{fig:so2} displays a contour plot of the misspecified negative population log-likelihood, which confirms that $\theta_*$ is not its minimizer.

\begin{figure}[ht]
\centering
\includegraphics[width=0.48\textwidth]{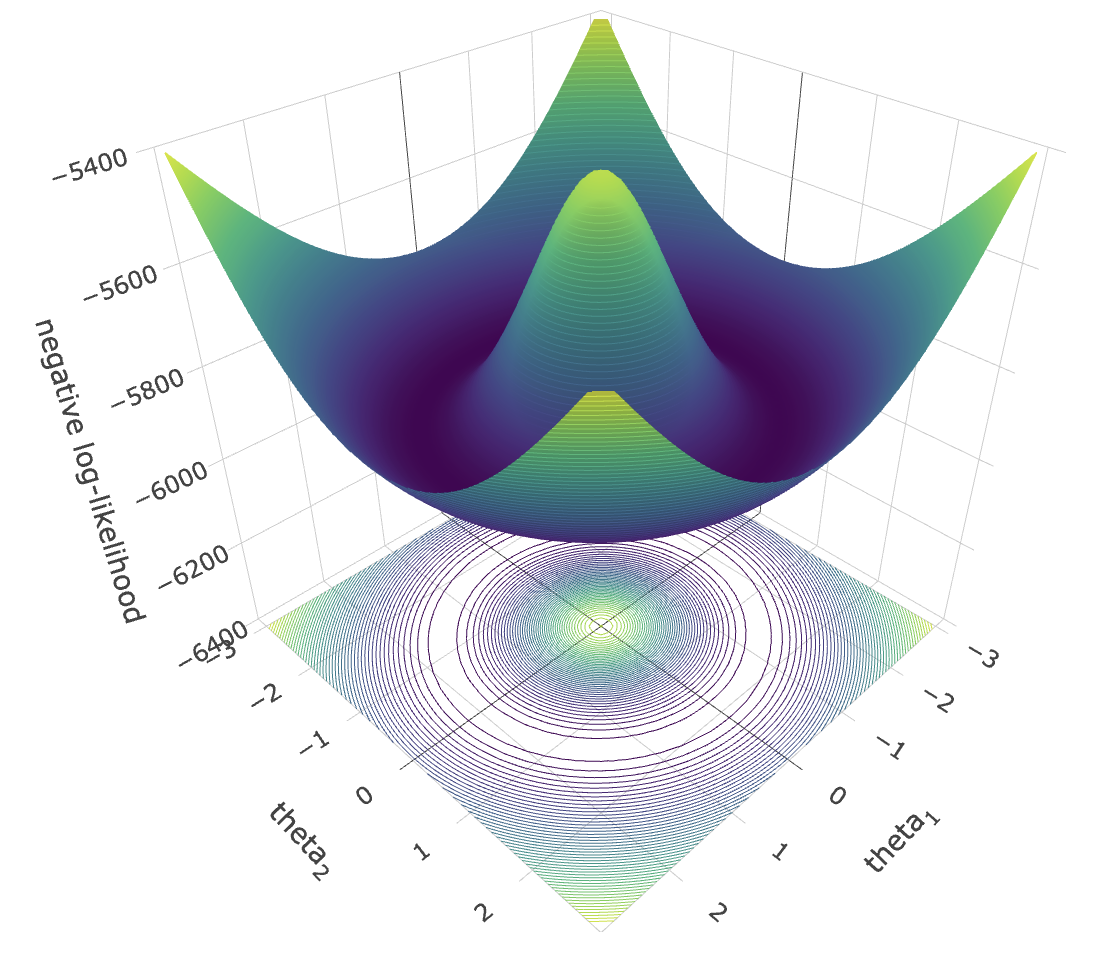}
\includegraphics[width=0.48\textwidth]{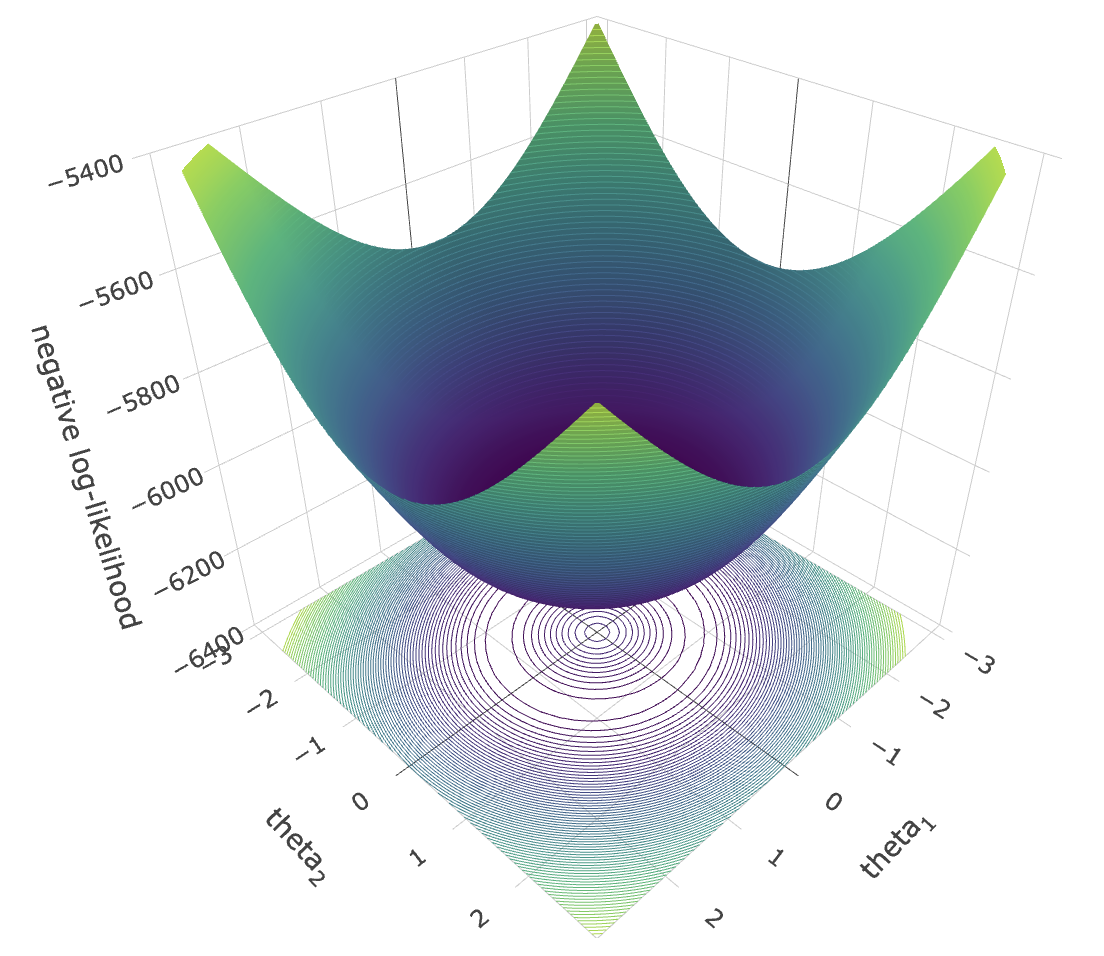}
\caption{Illustration of the optimization landscapes of the negative population log-likelihood for continuous MRA with $L=1$. The landscapes are functions of $(\theta_1^{(1)},\theta_2^{(1)})$, which appears as $(\theta_1,\theta_2)$ in the plots for convenience. The true parameter are $(2,0)$. Left: The correctly specified model where the minimizer is exactly $(2,0)$, recovering the true parameter. Right: A mispecified model where the minimizer is around $(1,0)$, indicating the true parameter is not its minimizer.}\label{fig:so2}
\end{figure}

\paragraph{(Projected) Cryo-EM} Consider the example of Section \ref{sec:model_cryoEM} with the group element $g$ given by \eqref{eq:cryoEM_rotation}. In this case, we can use the axis-angle representation for any $\frakg\in \SO(3)$. Specifically, let $\vartheta\in[0,2\pi)$ be the angle of rotation, and $(v_1,v_2,v_3)\in\mathcal{S}^2$ represent the unit vector along the axis of rotation. The vector $(v_1,v_2,v_3)$ can be parametrized by its latitude $\varphi_1\in[0,\pi]$ and longitude $\varphi_2\in[0,2\pi)$. Then the following $3\times 3$ matrix, which belongs to $\SO(3)$ and is denoted by $\frakg(\vartheta,\varphi_1,\varphi_2)$, corresponds to a rotation by angle $\vartheta$ around the axis $(v_1,v_2,v_3)$:
\begin{align*}
\begin{pmatrix}
v_1^2(1-\cos\vartheta)+\cos\vartheta \quad & \quad v_1 v_2(1-\cos\vartheta)-v_3\sin\vartheta \quad &  \quad v_1 v_3(1-\cos\vartheta)+v_2\sin\vartheta \\
v_1 v_2(1-\cos\vartheta)+v_3\sin\vartheta \quad & \quad v_2^2(1-\cos\vartheta)+\cos\vartheta \quad & \quad v_2 v_3(1-\cos\vartheta)-v_1\sin\vartheta \\
v_1v_3(1-\cos\vartheta)-v_2\sin\vartheta \quad & \quad v_2v_3(1-\cos\vartheta)+v_1\sin\vartheta \quad & \quad v_3^2(1-\cos\vartheta)+\cos\vartheta
\end{pmatrix}.
\end{align*}
Conversely, any element $\frakg\in\SO(3)$ can be represented in this form. Hence, we can set $\eta(u)=\check{\mathcal{D}}(\frakg(u))$ with $u=(\vartheta,\varphi_1,\varphi_2)$ and $U= [0,2\pi)\times [0,\pi]\times [0,2\pi)\subset \R^3$, such that $\eta:U\mapsto \G$ is a surjective function. The measure $\tilde\lambda_0$ corresponds to the product measure of the Haar measure on $\mathcal{S}^2$ and the uniform distribution on $[0,2\pi)$. Moreover, following \cite[Eqn. A16]{boyle2013angular} where $R_a=\cos(\vartheta/2)+\i \sin(\vartheta/2)v_3$ and $R_b=\sin(\vartheta/2)v_2+\i \sin(\vartheta/2)v_1$, we conclude that $\eta$ is real analytic in the domain $(0,2\pi)\times ((0,\pi/2)\cup (\pi/2,\pi))\times (0,2\pi)$. This ensures that $R_a\not=0$ and $R_b\not=0$ simultaneously. The existence of $u_0$, $\theta_0$, and $\tau_0^2$ satisfying $H(u_0; \theta_0, \tau_0^2)\neq 0$ in Theorem~\ref{thm:bias_existence} can be verified numerically, in a manner similar to the previous example. We omit the details here.

\subsubsection{Bias Quantification}\label{sec:quantification_bias}

Recall that the misspecified MLE is defined as
\begin{align*}
\hat\theta(\Lambda_0,\sigma^2)=\argmin_{\theta} \ell(\theta;\Lambda_0,\sigma^2).
\end{align*}
For any fixed $\theta\in\R^d$, define the projected orbit map 
\begin{align}\label{eq:Mtheta}
M_\theta: \G\to \R^{\td},~~~~~~ M_\theta (g) = P g \theta.
\end{align}
The \emph{projected orbit} of $\theta$ is then defined as the image of $M_\theta$, i.e.,
\begin{align}\label{eq:proj_orbit}
M_\theta(\G) := \{Pg\theta : g \in \G\} \subset \R^{\td}.
\end{align}
Based on the identifiability discussion in Section~\ref{sec:identification}, projected orbit recovery is often sufficient to ensure orbit list recovery. A full treatment of this implication, however, is beyond the scope of the present work.

In Theorems~\ref{thm:consistency_in_measure} and~\ref{thm:consistency_Hausdorff}, we establish upper bounds on the bias between the true signal $\theta_*$ and the misspecified estimator $\hat\theta(\Lambda_0, \sigma^2)$. More precisely, we quantify the deviation between their associated \textit{projected orbits}, $M_{\theta_*}(\G)$ and $M_{\hat\theta(\Lambda_0, \sigma^2)}(\G)$. Theorem~\ref{thm:consistency_in_measure} bounds the measure of points in $M_{\theta_*}(\G)$ that lie far from $M_{\hat\theta(\Lambda_0, \sigma^2)}(\G)$—a quantity that arises naturally in our analysis. This result is then translated into a more interpretable geometric distance: a one-sided Hausdorff distance between the two projected orbits, as formalized in Theorem~\ref{thm:consistency_Hausdorff}.

\begin{theorem}\label{thm:consistency_in_measure}
Assume that $\|P\|\leq C_P$ for some absolute positive constant $C_P$. Assume that the true group measure $\Lambda_*$ is absolutely continuous with respect to the Haar group measure $\Lambda_0$, with density $\der\Lambda_*/\der\Lambda_0 := Q$ satisfying $0<C_1\leq Q(g)\leq C_2$ for $\Lambda_0$-almost every $g\in\G$, where $C_1,C_2$ are absolute positive constants. For any $s>0$ and $\theta,\theta'\in\R^d$, define
\begin{align*}
m (s;\theta,\theta') = \Lambda_0 \big(\big\{g\in \G: d (Pg\theta, M_{\theta'}(\G))> s \big\} \big),
\end{align*}
where $d(x,A)$ denotes the Euclidean distance from a point $x$ to the set $A$ in $\R^{\td}$. Then for any fixed $s>0$ and $\alpha\in (0,2)$, there exists $\sigma_0>0$ depending only on $s,\td,\alpha$, such that for all $\sigma<\sigma_0$, the misspecified MLE satisfies 
\begin{align*}
m\big(s;\theta_*,\hat\theta(\Lambda_0,\sigma^2)\big)< \sigma^{2-\alpha}.
\end{align*}
\end{theorem}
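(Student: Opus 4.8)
The plan is to leverage the optimality of the population minimizer $\hat\theta := \hat\theta(\Lambda_0,\sigma^2)$ to control the expected squared distance $\E_{\theta_*,\Lambda_*}\big[d(y,M_{\hat\theta}(\G))^2\big]$, where $y = Pg_*\theta_* + \sigma\eps$ with $g_*\sim\Lambda_*$ and $\eps\sim\mathrm{N}(0,\Id_{\td})$; note $\hat\theta$ is deterministic, so $M_{\hat\theta}(\G)$ is a fixed set and the expectation runs over $y$ only. Writing $\bar\ell(\theta) := \ell(\theta;\Lambda_0,\sigma^2) - \tfrac{\td}{2}\log(2\pi\sigma^2)$ for the reduced objective (the $\theta$-independent constant cancels, so $\hat\theta$ still minimizes $\bar\ell$ and $\bar\ell(\hat\theta)\le\bar\ell(\theta_*)$), the crucial observation is the one-sided bound $\bar\ell(\theta)\ge \tfrac{1}{2\sigma^2}\E\big[d(y,M_\theta(\G))^2\big]$, valid for every $\theta$: since $\|y-Pg\theta\|\ge d(y,M_\theta(\G))$ for all $g$ and $\Lambda_0$ is a probability measure, bounding the integrand by its supremum over $g$ gives $\int_\G \exp(-\|y-Pg\theta\|^2/2\sigma^2)\,\der\Lambda_0(g)\le \exp(-d(y,M_\theta(\G))^2/2\sigma^2)$. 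Applying this at $\hat\theta$ and using optimality yields the sandwich $\E\big[d(y,M_{\hat\theta}(\G))^2\big]\le 2\sigma^2\bar\ell(\theta_*)$.

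It remains to bound $\bar\ell(\theta_*)$ from above and $\E[d(y,M_{\hat\theta}(\G))^2]$ from below in terms of $m$. For the former I would establish a Laplace-type lower bound on the likelihood integral at $\theta_*$ by restricting $g$ to a small left-translate of a neighborhood of the identity. Setting $g=hg_*$ and using left-invariance of $\Lambda_0$, we have $y-Phg_*\theta_* = P(\Id-h)g_*\theta_*+\sigma\eps$, so $\|y-Phg_*\theta_*\|\le C_P\|\Id-h\|\,\|\theta_*\|+\sigma\|\eps\|$ (using $\|g_*\theta_*\|=\|\theta_*\|$ and $\|P\|\le C_P$). Restricting to $N_\rho:=\{h\in\G:\|h-\Id\|\le\rho\}$ and applying $(a+b)^2\le 2a^2+2b^2$ gives
\[
\int_\G \exp\Big(-\tfrac{\|y-Pg\theta_*\|^2}{2\sigma^2}\Big)\,\der\Lambda_0(g)\ \ge\ \Lambda_0(N_\rho)\,\exp\Big(-\tfrac{C_P^2\rho^2\|\theta_*\|^2}{\sigma^2}-\|\eps\|^2\Big),
\]
whence $\bar\ell(\theta_*)\le -\log\Lambda_0(N_\rho)+C_P^2\rho^2\|\theta_*\|^2/\sigma^2+\td$ after taking expectations and using $\E\|\eps\|^2=\td$. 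Choosing $\rho=\sigma$ and invoking the standard volume lower bound $\Lambda_0(N_\rho)\gtrsim\rho^{\dim\G}$ for a compact group makes the first term $O(\log(1/\sigma))$ and the second $O(1)$, so $\bar\ell(\theta_*)=O(\log(1/\sigma))$.

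For the lower bound, I would restrict the expectation to the event that $g_*$ lands in the bad set while the noise is small. On $\{d(Pg_*\theta_*,M_{\hat\theta}(\G))>s\}\cap\{\sigma\|\eps\|\le s/2\}$, the $1$-Lipschitz property of $d(\cdot,M_{\hat\theta}(\G))$ gives $d(y,M_{\hat\theta}(\G))>s/2$, so by independence of $g_*$ and $\eps$,
\[
\E\big[d(y,M_{\hat\theta}(\G))^2\big]\ \ge\ \tfrac{s^2}{4}\,\Lambda_*\big(\{g:d(Pg\theta_*,M_{\hat\theta}(\G))>s\}\big)\,\P\big(\sigma\|\eps\|\le s/2\big).
\]
The density bound $Q\ge C_1$ converts the $\Lambda_*$-measure into $C_1\,m(s;\theta_*,\hat\theta)$, while $\chi^2_{\td}$ concentration gives $\P(\sigma\|\eps\|\le s/2)\ge 1/2$ once $\sigma$ is small relative to $s/\sqrt{\td}$ (one source of the $(s,\td)$-dependence of $\sigma_0$). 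Combining this with the sandwich and $\bar\ell(\theta_*)=O(\log(1/\sigma))$ gives $\tfrac{C_1 s^2}{8}\,m(s;\theta_*,\hat\theta)\le 2\sigma^2\bar\ell(\theta_*)=O(\sigma^2\log(1/\sigma))$, hence $m(s;\theta_*,\hat\theta)\lesssim \sigma^2\log(1/\sigma)/(C_1 s^2)$. Since $\sigma^\alpha\log(1/\sigma)\to0$, this falls below $\sigma^{2-\alpha}$ for all $\sigma$ smaller than a threshold determined by $s,\td,\alpha$ together with the fixed model constants $C_1,C_P,\|\theta_*\|,\dim\G$.

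The hard part will be the upper bound on $\bar\ell(\theta_*)$: the crude supremum bound that controls $\bar\ell(\hat\theta)$ points the wrong way here, so one genuinely needs a Laplace lower bound on the likelihood integral that captures the orbit's ``width'' through the Haar volume of small group balls. The left-invariance substitution $g=hg_*$, the polynomial volume estimate $\Lambda_0(N_\rho)\gtrsim\rho^{\dim\G}$, and the balance $\rho=\sigma$ are precisely what keep this term at $O(\log(1/\sigma))$ rather than blowing up like $1/\sigma^2$; this logarithmic growth is exactly what the factor $\sigma^{2-\alpha}$ (as opposed to $\sigma^2$) in the conclusion is designed to absorb. The remaining steps are triangle inequalities, the density sandwich, and Gaussian tail control.
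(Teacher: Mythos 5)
Your proposal is correct, and it takes a genuinely different route from the paper's proof. The paper never passes through the expected squared distance $\E[d(y,M_\theta(\G))^2]$: it compares $\ell(\theta_*;\Lambda_0,\sigma^2)$ and $\ell(\theta;\Lambda_0,\sigma^2)$ directly, bounding $\ell(\theta_*;\Lambda_0,\sigma^2)$ \emph{above by an absolute constant} via a KL decomposition (write $-\log p_{\theta}(y;\Lambda_0,\sigma^2)$ as $\log\frac{p_{\theta_*}(y;\Lambda_*,\sigma^2)}{p_{\theta_*}(y;\Lambda_0,\sigma^2)} - \log\frac{p_{\theta_*}(y;\Lambda_*,\sigma^2)}{\phi(y)} - \log\phi(y)$, then use $Q\le C_2$ for the first term, non-negativity of KL for the second, and a second-moment bound for the third), and bounding $\ell(\theta;\Lambda_0,\sigma^2)$ below for any $\theta$ with $m(s;\theta_*,\theta)\ge\eps$ by splitting $\R^{\td}$ into the tube $A=\{y: d(y,M_\theta(\G))\le s/2\}$ and its complement. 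Your sandwich $\tfrac{1}{2\sigma^2}\E[d(y,M_{\hat\theta}(\G))^2]\le\bar\ell(\hat\theta)\le\bar\ell(\theta_*)$, together with the Laplace lower bound at $\theta_*$, replaces both halves of that comparison; your final probabilistic step (bad rotation plus small noise, converted by $Q\ge C_1$) coincides with the paper's. Each approach buys something: yours never uses the upper density bound $C_2$ and delivers the stronger rate $m\lesssim\sigma^2\log(1/\sigma)$ (uniform over $\alpha\in(0,2)$), while the paper's works verbatim under exactly the stated hypotheses because the KL trick sidesteps any geometric regularity of $\G$.

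That last point is the one caveat you should address: your Laplace step invokes the small-ball estimate $\Lambda_0(N_\rho)\gtrsim\rho^{\dim\G}$, which is \emph{not} among the hypotheses of this theorem --- the paper introduces exactly this condition only later, as assumption \eqref{eq:measure_G} of Theorem~\ref{thm:consistency_Hausdorff}, and even there states its validity for MRA and cryo-EM without proof. The fact is true for every compact subgroup $\G\subset\mathsf{O}(d)$ (by Cartan's theorem $\G$ is a compact Lie subgroup, hence a smooth compact submanifold of $\R^{d\times d}$ whose Haar measure is comparable to its Riemannian volume, so balls in the extrinsic norm have mass $\ge c_\G\rho^{\dim\G}$; the discrete case $\dim\G=0$ is trivial), but you should either supply this argument or flag it as an extra ingredient. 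Alternatively, note that the paper's KL upper bound gives $\bar\ell(\theta_*)\le\td\log(1/\sigma)+O(1)$ directly, so you could splice it into your sandwich and dispense with the volume estimate entirely. A second, minor point: your threshold $\sigma_0$ depends on $\dim\G$, $c_\G$, $C_1$, $C_P$ and $\|\theta_*\|$ in addition to $s,\td,\alpha$; the paper's own proof shares the dependence on $\|\theta_*\|$ and the density constants despite the theorem's phrasing, so this does not distinguish the two arguments.
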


\begin{remark}
Although Theorem~\ref{thm:consistency_in_measure} is stated for the misspecified likelihood obtained by replacing the true group measure $\Lambda_*$ with the Haar measure $\Lambda_0$, inspection of the proof reveals that the result in fact holds more generally. Specifically, the same conclusion remains valid if $\Lambda_0$ is replaced by any reference measure $\tilde\Lambda$ on $\G$ for which the true group measure $\Lambda_*$ is absolutely continuous with respect to $\tilde\Lambda$, and the Radon--Nikodym derivative $\der \Lambda_*/\der \tilde\Lambda$ is bounded above and below by positive constants. This indicates that the recovery guarantee is not specific to the Haar measure, but rather hinges on a mild compatibility condition between the misspecified and true group measures.
\end{remark}

\begin{theorem}\label{thm:consistency_Hausdorff}
Let $\G$ be a continuous group. In addition to the conditions in Theorem~\ref{thm:consistency_in_measure}, suppose that there exist absolute positive constants $c_0$ and $\gamma\geq 1$ such that for all $g \in \G$ and all $\eta \in (0,1)$, we have 
\begin{align}\label{eq:measure_G}
\Lambda_0\big( \big\{ g' \in \G : \|g - g'\| \leq \eta \big\} \big) \geq c_0 \eta^\gamma.
\end{align}
Then for any $\gamma/(\gamma+2)<\alpha<1$, there exists $\sigma_0>0$ depending only on $\alpha,\gamma,\td$, such that for all $\sigma<\sigma_0$, the misspecified MLE satisfies
\begin{align*}
\sup_{g\in\G} d(Pg\theta_*, M_{\hat\theta(\Lambda_0,\sigma^2)}(\G)) \leq \td^{1/2} \sigma^{1-\alpha}.
\end{align*}
In other words, the one-sided Hausdorff distance from  $M_{\theta_*}(\G)$ to  $M_{\hat\theta(\Lambda_0,\sigma^2)}(\G)$ is at most $\td^{1/2} \sigma^{1-\alpha}$.
\end{theorem}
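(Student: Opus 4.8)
The plan is to upgrade the measure-level control provided by Theorem~\ref{thm:consistency_in_measure} into a uniform (one-sided Hausdorff) bound by exploiting two facts: the map $g \mapsto Pg\theta_*$ is Lipschitz on $\G$, and the regularity condition \eqref{eq:measure_G} guarantees that every metric ball in $\G$ carries nonnegligible $\Lambda_0$-mass. The combination lets me argue by contradiction: a single group element whose projected image is far from $M_{\hat\theta(\Lambda_0,\sigma^2)}(\G)$ forces an entire ball of such ``far'' elements, whose measure then violates the smallness guaranteed by Theorem~\ref{thm:consistency_in_measure}.

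Concretely, I would first record that $\|Pg\theta_* - Pg'\theta_*\| \le \|P\|\,\|g-g'\|\,\|\theta_*\| \le C_P\|\theta_*\|\,\|g-g'\|$, so $M_{\theta_*}$ is $L$-Lipschitz with $L = C_P\|\theta_*\|$; since $\G$ is compact and $g \mapsto d(Pg\theta_*, M_{\hat\theta(\Lambda_0,\sigma^2)}(\G))$ is continuous, the supremum in the statement is attained at some $g^* \in \G$. Set the target $T = \td^{1/2}\sigma^{1-\alpha}$ and suppose toward a contradiction that $d(Pg^*\theta_*, M_{\hat\theta(\Lambda_0,\sigma^2)}(\G)) > T$. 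Taking $\eta = T/(2L)$, the Lipschitz bound and the triangle inequality give $d(Pg\theta_*, M_{\hat\theta(\Lambda_0,\sigma^2)}(\G)) > T - L\eta = T/2$ for every $g$ with $\|g - g^*\| \le \eta$. Hence the ball $\{g : \|g-g^*\|\le\eta\}$ lies inside the ``far'' set at threshold $s = T/2$, and \eqref{eq:measure_G} yields the lower bound $m(T/2;\theta_*,\hat\theta(\Lambda_0,\sigma^2)) \ge c_0\eta^\gamma \asymp \sigma^{\gamma(1-\alpha)}$.

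It remains to contradict this with an upper bound on the same quantity, which is where the exponent bookkeeping enters. Applying the control of Theorem~\ref{thm:consistency_in_measure} with the shrinking threshold $s = T/2 \asymp \sigma^{1-\alpha}$ should give $m(s;\theta_*,\hat\theta(\Lambda_0,\sigma^2)) \lesssim \sigma^{2\alpha}$ (up to the arbitrarily small polynomial slack already present in that theorem). Comparing exponents, the lower bound decays like $\sigma^{\gamma(1-\alpha)}$ while the upper bound decays like $\sigma^{2\alpha}$, and the hypothesis $\gamma/(\gamma+2) < \alpha$ is precisely the inequality $\gamma(1-\alpha) < 2\alpha$, which forces the lower bound to dominate as $\sigma \to 0$. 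For $\sigma$ below a threshold $\sigma_0$ depending only on $\alpha,\gamma,\td$ (the absolute constants $C_P,\|\theta_*\|,c_0$ being absorbed into $\sigma_0$), the two bounds are incompatible, contradicting the existence of $g^*$ and establishing $\sup_{g\in\G} d(Pg\theta_*, M_{\hat\theta(\Lambda_0,\sigma^2)}(\G)) \le T$; the constraint $\alpha < 1$ ensures this bound vanishes with $\sigma$.

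The main obstacle is that Theorem~\ref{thm:consistency_in_measure} is stated only for a \emph{fixed} threshold $s$, with its $\sigma_0$ depending on $s$; used verbatim as a black box it merely yields $\sup_g d(\cdot) \lesssim s + o(1)$, which does not vanish. To obtain the decaying bound $\td^{1/2}\sigma^{1-\alpha}$ I must instead revisit the proof of Theorem~\ref{thm:consistency_in_measure} and expose its explicit dependence on $s$—namely that the argument really controls $m(s)$ by a quantity of order $(\sigma/s)^2$ times the small slack factor—and verify that this estimate remains valid when $s$ is allowed to shrink polynomially in $\sigma$, i.e. $s \asymp \sigma^{1-\alpha}$ with $1-\alpha$ small enough that $s$ still dominates the natural noise scale. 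Securing this uniformity of the measure estimate over $\sigma$-dependent thresholds, rather than the Lipschitz/covering geometry, is the delicate part; once it is in place, the remaining steps are elementary.
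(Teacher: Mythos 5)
Your proposal is correct and follows essentially the same route as the paper's proof: a Lipschitz-plus-ball-mass argument converting a single far point $Pg_0\theta_*$ into a measure lower bound of order $\sigma^{\gamma(1-\alpha)}$, followed by re-running the proof of Theorem~\ref{thm:consistency_in_measure} with the shrinking threshold $s \asymp \td^{1/2}\sigma^{1-\alpha}$ so that the likelihood comparison forces $m \lesssim (\sigma/s)^2$ up to logarithmic slack, with the exponent condition $\alpha > \gamma/(\gamma+2)$ yielding the contradiction. You also correctly identified the one genuinely delicate point—that Theorem~\ref{thm:consistency_in_measure} cannot be used as a black box because its $\sigma_0$ depends on a fixed $s$—which is precisely what the paper handles by verifying the intermediate bounds (e.g., the analogue of \eqref{eq:lower _bound_loglikelihood_0}) under conditions like $16\log(1/\sigma) \le \sigma^{-2\alpha}$.
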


\begin{remark}
We state without proof that the measure regularity assumption \eqref{eq:measure_G} holds with $\gamma = 1$ for the projected MRA setting and with $\gamma = 3$ for the cryo-EM setting. This reflects the intrinsic dimensionality of the underlying group $\G$ in each model. In the projected MRA setting, the group acts via a one-dimensional circle of shifts, while in cryo-EM, $\G \cong \mathsf{SO}(3)$ is a three-dimensional Lie group. Intuitively, the exponent $\gamma$ in \eqref{eq:measure_G} captures the local volume growth around a group element under the Haar measure, and thus coincides with the manifold dimension of the group.
\end{remark}

\begin{remark}
The one-sided Hausdorff distance bound means that every point in the true projected orbit $M_{\theta_*}(\G)$ lies within a small neighborhood (of radius $\td^{1/2} \sigma^{1-\alpha}$) of the estimated orbit $M_{\hat\theta(\Lambda_0,\sigma^2)}(\G)$. In other words, no point in the true orbit is too far from the estimated orbit. This provides a one-sided control: it ensures that the estimated orbit approximately covers the true orbit, but not necessarily the other way around. That is, the estimator does not ``miss'' the true orbit, though it may include extra points that are not close to any point in the true orbit.

A similar one-sided guarantee holds for the bound in Theorem~\ref{thm:consistency_in_measure}. The one-sided nature of both bounds reflects the structure of our analysis: for any parameter $\theta$ whose orbit $M_\theta(\G)$ fails to adequately cover $M_{\theta_*}(\G)$—in the sense that some portion of $M_{\theta_*}(\G)$ lies outside a neighborhood of $M_\theta(\G)$—we can show that the misspecified log-likelihood $\ell(\theta; \Lambda_0, \sigma^2)$ is strictly greater than $\ell(\theta_*; \Lambda_0, \sigma^2)$. Hence, such a $\theta$ cannot be the global minimizer $\hat\theta(\Lambda_0, \sigma^2)$.
In contrast, control on the reverse one-sided Hausdorff distance---that is, ensuring that every point in the estimated (projected) orbit lies near some point in the true (projected) orbit---is more challenging and remains beyond the scope of our current analysis.
\end{remark}

\begin{remark}
Theorem~\ref{thm:consistency_in_measure} also applies when $\G$ is a discrete group, with the same statement and convergence rate. The additional condition \eqref{eq:measure_G} in Theorem~\ref{thm:consistency_Hausdorff} is tailored to the continuous group setting, where it quantifies the local mass of the Haar measure. In the discrete case, this condition naturally reduces to a uniform lower bound on the measure, i.e., $\Lambda_0(g) \geq C$ for all $g \in \G$ and some absolute constant $C > 0$. This lower bound holds automatically if $\Lambda_0$ is uniform over a finite group. Under this condition, the one-sided Hausdorff distance bound in Theorem~\ref{thm:consistency_Hausdorff} holds with the same rate $\sigma^{2-\alpha}$ as in Theorem~\ref{thm:consistency_in_measure}.

Moreover, for the discrete case, one can also establish the reverse one-sided Hausdorff bound under mild generic conditions on $P$ and $\theta_*$. The analysis proceeds along similar lines to that of Theorem~\ref{thm:consistency_in_measure}, using standard probabilistic and geometric arguments. In contrast, in the continuous group setting, the geometry of the projected orbits is substantially more intricate. This complexity motivates the use of tools from real algebraic geometry in our proof of Proposition~\ref{prop:non-self_intersection}.
\end{remark}

\subsubsection{\texorpdfstring{Vanishing Bias When $\sigma\rightarrow 0$}{Vanishing Bias When sigma goes to 0}}\label{sec:bias_sigma_0}
The bounds established in the previous section vanish as $\sigma$ decreases. Although these bounds are one-sided, by leveraging the geometric non-self-intersection property of orbits established in Proposition~\ref{prop:non-self_intersection}, we show that the projected orbit corresponding to the limit $\lim_{\sigma^2 \to 0} \hat\theta(\Lambda_0, \sigma^2)$ coincides exactly with the true projected orbit $M_{\theta_*}(\G)$. This implies exact recovery in the zero-noise limit and ensures that the full Hausdorff distance between the estimated and true projected orbits converges to zero, providing theoretical support for the use of the misspecified MLE in low-noise regimes.

To establish these results, we analyze the orbit geometry under group actions. Proposition~\ref{prop:non-self_intersection} proves that, generically, the projected orbit map $g \mapsto M_\theta(g)$ as defined in \eqref{eq:Mtheta} is injective, using dimension arguments from algebraic geometry. We further verify this property in two practical models—projected continuous MRA and cryo-EM. As a consequence, Corollary~\ref{cor:orbit_subset} rules out partial recovery scenarios, where only part of the true projected orbit is approximated in the limit, thereby addressing the limitation of one-sided Hausdorff control highlighted earlier.

\begin{proposition}[No self-intersection for generic projected orbits]\label{prop:non-self_intersection}
Let $P\in\R^{\td\times d}$ be a linear map from $\R^d$ to $\R^{\td}$. Let $\G$ be a compact and connected subgroup of the orthogonal group of dimension $d$. For $g\in \G$, let $E_{\lambda}(g)$ be the eigenspace of $g$ with eigenvalue $\lambda$. Assume that $P$ is surjective and 
\begin{align*}
\td > \dim \G + \max_{k\geq 0} \big\{k+\dim\{g\in\G\setminus \Id ~|~ \dim E_1(g)\geq k\}\big\},
\end{align*}
where, by convention,  $\dim(\emptyset) = -\infty$. Then for generic $\theta$, the map $M_\theta$ is a homeomorphism from $\G$ onto its image $M_\theta(\G)$ in the standard topology.
\end{proposition}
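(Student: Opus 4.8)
The plan is to reduce the homeomorphism claim to injectivity of $M_\theta$, and then to show that the set of parameters for which $M_\theta$ fails to be injective is a semialgebraic subset of $\R^d$ of dimension strictly less than $d$, hence negligible in the generic sense of the paper's definition. First I would dispose of the topological part: the map $M_\theta(g)=Pg\theta$ of \eqref{eq:Mtheta} is continuous, $\G$ is compact, and $\R^{\td}$ is Hausdorff, so a continuous bijection onto its image is automatically a homeomorphism. It therefore suffices to prove that $M_\theta$ is injective for generic $\theta$. To reformulate injectivity failure, observe that $M_\theta(g_1)=M_\theta(g_2)$ with $g_1\neq g_2$ is equivalent, after the substitution $g=g_2$, $h=g_2^{-1}g_1\in\G\setminus\{\Id\}$, to $Pg(h-\Id)\theta=0$. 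Thus the ``bad set'' is the projection $\mathcal{B}=\pi_\theta(Z)$ to the $\theta$-coordinate of
\[
Z=\{(g,h,\theta)\in\G\times(\G\setminus\{\Id\})\times\R^d : Pg(h-\Id)\theta=0\}.
\]
Since $\G$ is a compact subgroup of $\mathsf{O}(d)$ it is a real algebraic set, the defining relation is polynomial, and $h\neq\Id$ is an open semialgebraic condition; by Tarski--Seidenberg $\mathcal{B}$ is semialgebraic with $\dim\mathcal{B}\leq\dim Z$, so the crux is to show $\dim Z<d$.

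The dimension count is the heart of the argument, and the substitution $(g_1,g_2)\mapsto(g,h)$ is precisely what makes it tractable, because the collision condition is now governed solely by the fixed-space dimension $\dim E_1(h)$, the quantity the hypothesis controls. I would stratify $Z$ according to $k=\dim E_1(h)=\dim\ker(h-\Id)$. Writing $\G_k=\{h\in\G\setminus\{\Id\}:\dim E_1(h)=k\}$, we have $\G_k\subseteq S_k:=\{g\in\G:\dim E_1(g)\geq k\}$, and $S_k$ is Zariski-closed since it is the determinantal locus $\rank(g-\Id)\leq d-k$; hence $\dim\G_k\leq\dim S_k$. Over a fixed pair $(g,h)$ with $\dim E_1(h)=k$, the $\theta$-fiber is the linear subspace $(h-\Id)^{-1}(g^\T\ker P)$, whose dimension is $k+\dim\big(\Im(h-\Id)\cap g^\T\ker P\big)\leq k+(d-\td)$, using that $P$ is surjective so $\dim\ker P=d-\td$. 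Applying the fiber-dimension theorem for semialgebraic maps to the stratum $Z^{(k)}$ fibered over the base $\G\times\G_k$ via $(g,h)$ gives
\[
\dim Z^{(k)}\leq \dim\G+\dim\G_k+k+(d-\td)\leq \dim\G+\dim S_k+k+(d-\td).
\]
The hypothesis $\td>\dim\G+\max_{k\geq 0}\{k+\dim S_k\}$ forces $\dim Z^{(k)}<d$ for every $k$, and since $h\neq\Id$ only finitely many strata ($k=0,\dots,d-1$) occur, so $\dim Z=\max_k\dim Z^{(k)}<d$.

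To conclude, $\dim\mathcal{B}\leq\dim Z<d$, so the Zariski closure of $\mathcal{B}$ is a proper real-algebraic subset of $\R^d$ and is therefore contained in the zero set of some nonzero polynomial. Consequently the complement of $\mathcal{B}$ is generic in the sense of the paper's Definition, and for every such $\theta$ the map $M_\theta$ is injective, hence a homeomorphism onto the projected orbit $M_\theta(\G)$ of \eqref{eq:proj_orbit}.

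I expect the main obstacle to be the rigorous real-semialgebraic bookkeeping rather than any single inequality: one must justify that $\G$ and the loci $S_k$ are genuine semialgebraic sets of the asserted dimensions, verify upper semicontinuity of the fiber dimension so that the fiber-dimension inequality may be applied stratum-by-stratum over $\R$, and confirm that images under the coordinate projection behave as expected (via Tarski--Seidenberg) so that $\dim\mathcal{B}\leq\dim Z$. Once this framework is set up, the conceptual reduction via $(g,h)$ together with the elementary fiber estimate $k+(d-\td)$ turns the hypothesis into the clean bound $\dim Z<d$ by a direct application of standard dimension theorems.
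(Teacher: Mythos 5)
Your proposal follows essentially the same route as the paper's proof: reduce to injectivity (continuous bijection from a compact space to a Hausdorff space is a homeomorphism), realize the set of bad $\theta$'s as the coordinate projection of a semialgebraic incidence set, and bound that set's dimension by fibering over group elements, with the linear-algebra fiber bound $k+(d-\td)$ coming from surjectivity of $P$. Your change of variables $(g,h)=(g_2,\,g_2^{-1}g_1)$ is exactly the paper's observation that each $g\in\G\setminus\Id$ arises as $g_2g_1^{-1}$ from a $\dim\G$-dimensional set of pairs, and your stratification by $k=\dim E_1(h)$ is equivalent bookkeeping to the paper's use of the fiber-dimension formula $\dim(X)=\max_{m\geq 0}\{m+\dim\{y : \dim f^{-1}(y)\geq m\}\}$.

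One slip needs repair before the last step is valid as written. Your $S_k=\{g\in\G:\dim E_1(g)\geq k\}$ contains the identity, so the hypothesis as you quote it, $\td>\dim\G+\max_{k\geq 0}\{k+\dim S_k\}$, is never satisfiable: taking $k=d$ gives $\Id\in S_d$, hence $\max_{k\geq 0}\{k+\dim S_k\}\geq d\geq \td$. The proposition's hypothesis instead uses $T_k:=\{g\in\G\setminus\Id : \dim E_1(g)\geq k\}=S_k\setminus\{\Id\}$, together with the convention $\dim(\emptyset)=-\infty$ — which is precisely what kills the $k=d$ term (and any $k$ with no non-identity fixed-space of that dimension). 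The fix costs nothing: every stratum $\G_k$ lies in $\G\setminus\{\Id\}$, so $\G_k\subseteq T_k$ and $T_k$ is still semialgebraic (a determinantal locus minus a point), giving $\dim\G_k\leq\dim T_k$; your bound then reads $\dim Z^{(k)}\leq \dim\G+\dim T_k+k+(d-\td)$, and the actual hypothesis yields $\dim Z^{(k)}<d$ for every stratum that occurs, exactly as intended.
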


\begin{remark}
The dimension lower bound condition in Proposition~\ref{prop:non-self_intersection} is essential. The conclusion may fail even if we only assume $\td > \dim \G$. For example, let $L\geq 1$ and consider 
\begin{align*}
\theta = (\varphi,\phi) =(\varphi^{(1)}_1, \varphi^{(1)}_2, \ldots, \varphi^{(L)}_1, \varphi^{(L)}_2, \phi^{(1)}_1, \phi^{(1)}_2, \ldots, \phi^{(L)}_1, \phi^{(L)}_2)\in\R^d,\quad d=4L,
\end{align*}
and 
\begin{align*}
g=\Big(\bigoplus_{l=1}^L \mathcal{H}^{(l)}(\frakg_1) \Big) \oplus \Big(\bigoplus_{l=1}^L \mathcal{H}^{(l)}(\frakg_2) \Big),
\end{align*}
where $\frakg_1,\frakg_2\in[0,2\pi)$ and $\mathcal{H}^{(l)}(\frakg)$ is defined as in \eqref{eq:MRA_rotation_detail}. Define the linear map $P\in\R^{\td\times d}$ by
\begin{align*}
P\theta= \Big(\sum_{i=1}^L \varphi^{(i)}_1, \sum_{i=1}^L\varphi^{(i)}_2, \phi^{(1)}_1, \phi^{(1)}_2, \ldots, \phi^{(L)}_1, \phi^{(L)}_2\Big),\quad \td=2L+2,
\end{align*}
which takes the sum of all $\varphi^{(i)}_1, \varphi^{(i)}_2$ coordinates across $i$, and keeps the $\phi$-coordinates unchanged. One can then show $M_\theta(\G)$ intersects itself for generic $\theta$ even though $\td=2L+2 >2=\dim\G$. Indeed, considering the first two coordinates in $M_\theta(\G)$, which only depend on the $\frakg_1$ action, self-intersection occurs for generic $\varphi$. Since the actions $\frakg_1$ and $\frakg_2$ are independent, these self-intersections persist in the full map.
\end{remark}

As an immediate consequence of Proposition~\ref{prop:non-self_intersection}, we obtain the following corollary, which verifies the non-self-intersection property in the settings of projected continuous MRA and cryo-EM, respectively. An example illustrating this property in the projected MRA setting is shown in Figure~\ref{fig:curve}.

\begin{corollary}\label{cor:no_self_intersection_examples}
\leavevmode
\begin{itemize}
\item In the case of projected continuous MRA, \(M_\theta\) is a homeomorphism onto its image for generic \(\theta\) if \(L \geq 3\).
\item In the case of (projected) cryo-EM, assuming \(S_0 = S_1 = \cdots = S_L = S\geq 1\), \(M_\theta\) is a homeomorphism onto its image for generic \(\theta\) if \(L \geq 1\).
\end{itemize}
\end{corollary}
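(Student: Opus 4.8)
The plan is to derive Corollary~\ref{cor:no_self_intersection_examples} by verifying, in each of the two models, the dimension inequality required by Proposition~\ref{prop:non-self_intersection}, namely
\begin{align*}
\td > \dim \G + \max_{k\geq 0} \big\{k+\dim\{g\in\G\setminus \Id ~|~ \dim E_1(g)\geq k\}\big\}.
\end{align*}
Since the proposition already reduces non-self-intersection to this inequality, the entire task is a computation of the three ingredients on the right-hand side: the group dimension $\dim\G$, the ambient target dimension $\td$, and the fixed-space-defect term governing how large the $+1$-eigenspace of a non-identity element can be together with the dimension of the set of elements achieving that eigenspace. First I would record the standard facts $\dim\SO(2)=1$ and $\dim\SO(3)=3$, and identify $\td$ in each model from the projection operator described in Section~\ref{sec:model_MRA} and Appendix~\ref{sec:model_cryoEM_appendix}.

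For the projected continuous MRA case, the group is $\G\cong\SO(2)$ acting on $\R^d$ ($d=2L+1$) via the block-diagonal representation $\mathcal{H}(\frakg)=\bigoplus_{l=0}^L \mathcal{H}^{(l)}(\frakg)$ from \eqref{eq:MRA_rotation}, and after the two-fold projection the target dimension is $\td=L+1$. Here $\dim\G=1$. The key step is to control $\dim E_1(g)$ for a nonidentity $g=\mathcal{H}(\frakg)$: each block $\mathcal{H}^{(l)}(\frakg)$ is a planar rotation by angle $l\frakg$, which has the eigenvalue $1$ only when $l\frakg\equiv 0\pmod{2\pi}$, contributing the trivial block $\mathcal{H}^{(0)}$ always and additional $2$-dimensional fixed blocks only for the finitely many $\frakg$ with $l\frakg\in 2\pi\mathbb{Z}$. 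Thus for any nonidentity $g$ the $+1$-eigenspace is low-dimensional (at most a bounded constant), and moreover each fixed eigenspace dimension $k$ is attained only on a \emph{finite} subset of $\G$, so $\dim\{g\in\G\setminus\Id : \dim E_1(g)\geq k\}=0$ (or $-\infty$ when empty). I would then check that $\max_k\{k+0\}$ is bounded by a small constant, so that the inequality becomes roughly $L+1 > 1 + (\text{small constant})$, which holds precisely once $L\geq 3$. The bookkeeping of exactly which $k$ are attainable and the resulting precise threshold is the one place where care is needed; the surjectivity of $P$ also has to be confirmed from the explicit form of $P\theta$.

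For the (projected) cryo-EM case the group is $\G\cong\SO(3)$, so $\dim\G=3$ and $\td$ is determined by the multiplicities $S_0=\cdots=S_L=S$ of the irreducible components in the projection image (as set up in Appendix~\ref{sec:model_cryoEM_appendix}); I would compute $\td$ as a function of $S$ and $L$ from that construction. The essential geometric input is that every nonidentity $g\in\SO(3)$ is a rotation about some axis by a nonzero angle, so its action on each irreducible component $V_l$ (of dimension $2l+1$) fixes only the zero-frequency direction along the axis; concretely $\dim E_1(g)$ on $V_l$ equals $1$ for generic angle and can only grow for the finitely many special angles $2\pi j/l$. The set $\{g\in\SO(3)\setminus\Id : \dim E_1(g)\geq k\}$ is a union of lower-dimensional subvarieties, and I would argue that the term $\max_k\{k+\dim(\cdots)\}$ is dominated by a contribution of order comparable to $\dim\SO(3)$ coming from the axis direction (a $2$-sphere's worth of axes, contributing dimension $2$, with $k$ small). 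Assembling these gives an inequality of the shape $\td > 3 + (\text{bounded term})$ that holds for all $L\geq 1$ under $S\geq 1$.

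The main obstacle I expect is the fixed-point-defect term $\max_{k\geq 0}\{k+\dim\{g : \dim E_1(g)\geq k\}\}$ in the cryo-EM setting: unlike the MRA case where special elements form finite sets, in $\SO(3)$ the elements with a prescribed $+1$-eigenspace dimension form positive-dimensional subvarieties (e.g. a full $2$-parameter family of axes for each special rotation angle), and one must simultaneously optimize over $k$ and over the dimension of these loci to get a tight bound. I would handle this by a case analysis on $k$, bounding $\dim E_1(g)$ on the full representation $\bigoplus_l V_l^{\oplus S}$ in terms of how many frequency blocks $l$ have the rotation angle landing at a root of unity, and showing that each increment of $k$ forces a corresponding drop in the dimension of the admissible $g$-locus, so the maximum stays well below $\td-\dim\G$. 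Verifying surjectivity of $P$ and pinning down $\td$ precisely in each appendix parametrization are the remaining routine but necessary checks.
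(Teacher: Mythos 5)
Your high-level plan is the same as the paper's: verify the inequality of Proposition~\ref{prop:non-self_intersection} by computing $\dim\G$, $\td$, and the term $\max_{k\geq 0}\{k+\dim\{g\in\G\setminus\Id \,:\, \dim E_1(g)\geq k\}\}$. The gap is that your quantitative claims about this last term are false, and the verification collapses once it is computed correctly. In the MRA case you rightly note (unlike the paper) that a non-identity $g=\mathcal{H}(\frakg)$ acquires extra fixed blocks whenever $l\frakg\in 2\pi\mathbb{Z}$ for some $1\leq l\leq L$, but your assertion that $\dim E_1(g)$ stays ``at most a bounded constant'' is wrong: for $\frakg=\pi$ every even-indexed block of \eqref{eq:MRA_rotation} equals the identity, so $\dim E_1(\mathcal{H}(\pi))=1+2\lfloor L/2\rfloor$, which grows linearly in $L$. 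Since this element exists for every $L\geq 2$ and lies in a zero-dimensional locus, the max term is at least $1+2\lfloor L/2\rfloor$, and the hypothesis $\td=L+1>\dim\G+\max_k\{\cdot\}$ becomes $L+1>2+2\lfloor L/2\rfloor$, which fails for \emph{every} $L$ (it reads $L+1>L+2$ for even $L$ and $L+1>L+1$ for odd $L$). So, carried out honestly, your computation never verifies the hypothesis and no threshold $L\geq 3$ emerges from it.

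The cryo-EM case is worse, and your own correct observation is what defeats it: every non-identity rotation fixes the zonal direction in each irreducible block of \eqref{eq:cryoEM_rotation}, so all $(L+1)S$ blocks contribute at least one fixed dimension and $\dim E_1(g)\geq (L+1)S$ for every $g\in\G\setminus\{\Id\}$. This value of $k$ is attained on the \emph{entire} three-dimensional group, not on a thin subvariety, so the max term is at least $(L+1)S+3$ and your mechanism ``each increment of $k$ forces a drop in the dimension of the locus'' cannot apply to it; the hypothesis then reads $S(2L+1)>3+(L+1)S+3$, i.e.\ $LS\geq 7$, far from the claimed $L\geq 1$. For comparison, the paper's own proof takes the identical route but asserts $\dim E_1(g)=1$ (MRA) and $\dim E_1(g)=S_0$ (cryo-EM) for all non-identity $g$, i.e.\ it counts only the trivial-representation blocks—exactly what your block-by-block analysis contradicts. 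The upshot is that the stated thresholds cannot be obtained from Proposition~\ref{prop:non-self_intersection} together with the correct eigenspace dimensions; rescuing the claim requires replacing the generic bound $\dim\ker(P(g_1-g_2))\leq d-\td+\dim E_1(g_2g_1^{-1})$ by a direct, model-specific computation for the special elements (for instance, for MRA with $\frakg=\pi$ the kernel of $P(\Id-g)$ is cut out by the $\lceil L/2\rceil$ equations $\theta^{(l)}_1=0$ for odd $l$, giving dimension $d-\lceil L/2\rceil$, which is what actually produces an $L\geq 3$ threshold). Neither your proposal nor the crude eigenspace bound supplies this refinement.
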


\begin{figure}[ht]
    \centering
    \includegraphics[width=0.48\linewidth]{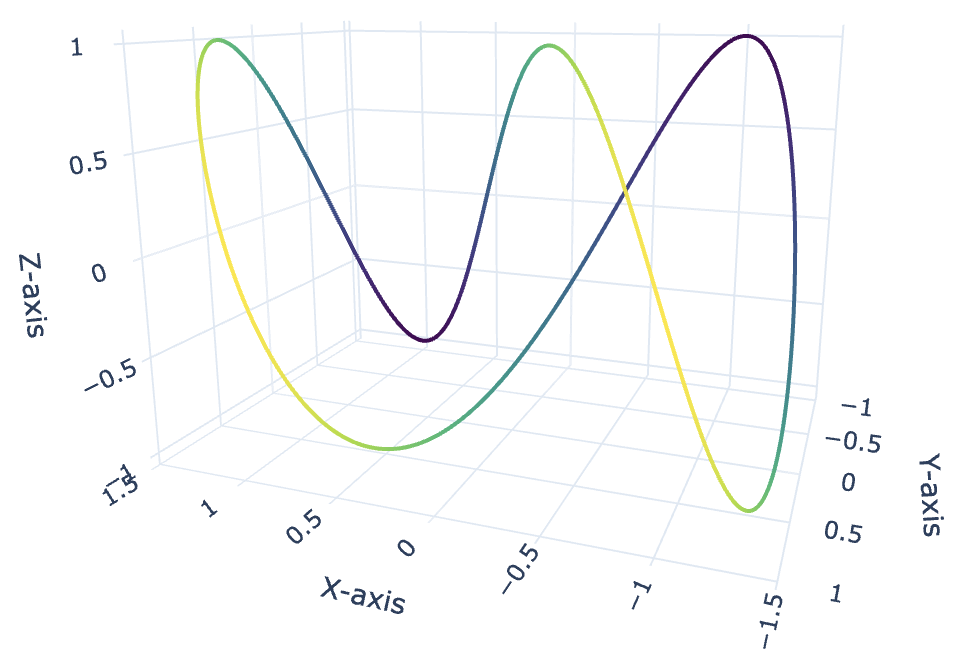}
    \includegraphics[width=0.489\linewidth]{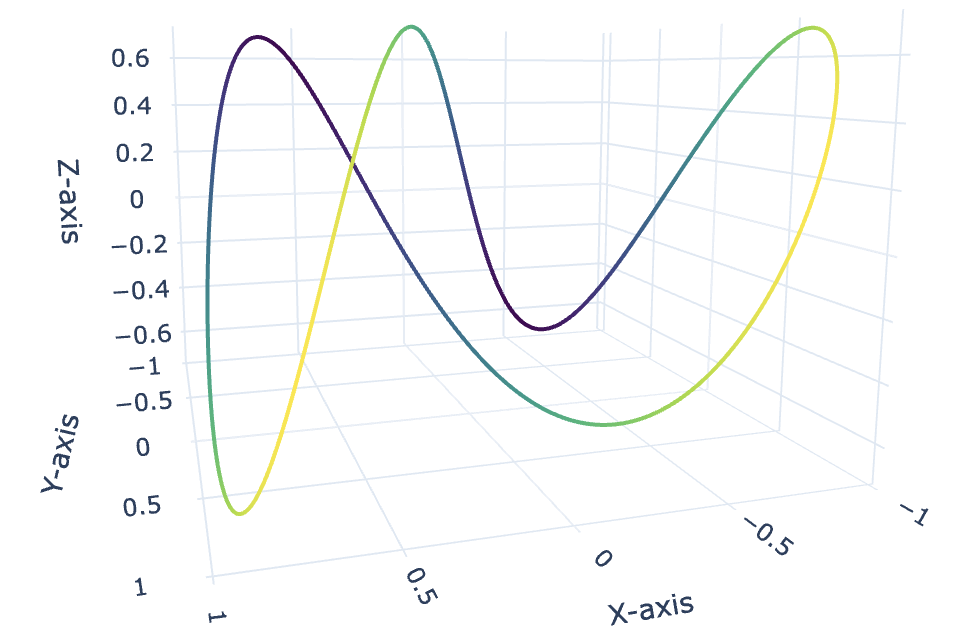}
    \caption{An illustration of $M_\theta$ for the projected  MRA with $L=3$. Though $\td =4$, since the first coordinate is fixed, $M_{\theta}$ can be visualized in $\mathbb{R}^3$. Left: $\theta=(0,1,1,1,0,0,1)$. Right: a randomly generated $\theta$. The curves are colored to illustrate that they are not self-intersecting.}
    \label{fig:curve}
\end{figure}

We further derive a general consequence of Proposition~\ref{prop:non-self_intersection}, showing that under connected group actions, any inclusion between generic orbits must in fact be equality.

\begin{corollary}\label{cor:orbit_subset}
Assume the conditions in Proposition \ref{prop:non-self_intersection} hold. Let $\theta,\theta'\in\R^d$ be generic points as described in Proposition \ref{prop:non-self_intersection}. If $M_\theta(\G)\subseteq M_{\theta'}(\G)$, then $M_\theta(\G)=M_{\theta'}(\G)$ as a subset of $\R^{\td}$. 
\end{corollary}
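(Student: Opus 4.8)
The plan is to reduce the set inclusion $M_\theta(\G)\subseteq M_{\theta'}(\G)$ to a statement about a continuous injective self-map of $\G$, and then close the argument with Brouwer's invariance of domain. The essential input is Proposition~\ref{prop:non-self_intersection}: since $\theta$ and $\theta'$ are both generic, the maps $M_\theta$ and $M_{\theta'}$ are homeomorphisms from $\G$ onto their respective images. In particular, the inverse $M_{\theta'}^{-1}:M_{\theta'}(\G)\to\G$ is continuous, which is what makes the composition below well-behaved.

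First, because by hypothesis $M_\theta(\G)\subseteq M_{\theta'}(\G)$, every point $M_\theta(g)$ lies in the domain of $M_{\theta'}^{-1}$, so the composition
\[
\phi := M_{\theta'}^{-1}\circ M_\theta : \G\to\G
\]
is well-defined. It is continuous as a composition of continuous maps, and it is injective because both $M_\theta$ and $M_{\theta'}^{-1}$ are injective. Thus $\phi$ is a continuous injection of $\G$ into itself.

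Next I would invoke that $\G$, being a compact connected subgroup of the orthogonal group, is a compact connected smooth manifold (a compact Lie group, by the closed subgroup theorem) of dimension $m=\dim\G$. The map $\phi$ goes between two manifolds of the same dimension $m$, so the manifold version of invariance of domain—obtained by applying Brouwer's Euclidean statement in charts—guarantees that $\phi$ is an \emph{open} map; hence $\phi(\G)$ is open in $\G$. At the same time $\phi(\G)$ is compact, being the continuous image of the compact set $\G$, and therefore closed in $\G$. Since $\G$ is connected and $\phi(\G)$ is a nonempty clopen subset, we conclude $\phi(\G)=\G$. Translating back, $\phi(\G)=\G$ means $M_{\theta'}^{-1}\big(M_\theta(\G)\big)=\G$, and applying the homeomorphism $M_{\theta'}$ to both sides yields $M_\theta(\G)=M_{\theta'}(\G)$ as subsets of $\R^{\td}$, which is exactly the claim.

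The step I expect to be the main obstacle—or at least the one requiring care—is the correct invocation of invariance of domain: it needs the source and target to be boundaryless manifolds of the \emph{same} dimension, which is precisely guaranteed here because $M_\theta(\G)$ and $M_{\theta'}(\G)$ are both topological copies of $\G$ and hence share the dimension $\dim\G$. Everything else is elementary point-set topology (compactness and connectedness), resting entirely on the homeomorphism property supplied by Proposition~\ref{prop:non-self_intersection}; no further algebraic-geometric input beyond that proposition is needed, and the connectedness hypothesis on $\G$ is used exactly once, to promote the clopen set $\phi(\G)$ to all of $\G$.
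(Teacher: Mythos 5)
Your proof is correct and is essentially the paper's argument: the paper's one-line proof (both orbits are connected manifolds of the same dimension, so the inclusion forces equality) rests implicitly on exactly the invariance-of-domain plus clopen-in-a-connected-space reasoning that you spell out explicitly via the self-map $\phi = M_{\theta'}^{-1}\circ M_\theta$. No gap; you have simply filled in the topological details the paper leaves tacit.
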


We conclude this section by showing that, in the zero-noise limit, the projected orbit associated with the misspecified MLE converges exactly to the true projected orbit.

\begin{theorem}[Exact projected orbit recovery in the zero-noise limit]\label{thm:orbit_recovery}
Assume the conditions in Theorem~\ref{thm:consistency_in_measure} and Proposition~\ref{prop:non-self_intersection} hold. Let $\theta_*\in\R^d$
be a generic point as described in Proposition~\ref{prop:non-self_intersection}, and suppose the limit $\bar\theta:=\lim_{\sigma^2\to 0} \hat\theta(\Lambda_0,\sigma^2)$ exists\footnote{Here, the existence of the limit $\bar\theta := \lim_{\sigma^2 \to 0} \hat\theta(\Lambda_0,\sigma^2)$ means that there exists a representative in the projected orbit $M_{\hat\theta(\Lambda_0,\sigma^2)}(\G)$ that converges (in Euclidean norm) to a point $\bar\theta$ as $\sigma^2 \to 0$.} and is also generic. Then we have
\begin{align}\label{eq:orbit_zero_noise}
M_{\bar\theta} (\G)= M_{\theta_*}(\G).
\end{align}
In particular, the Hausdorff distance between $M_{\hat\theta(\Lambda_0,\sigma^2)}(\G)$ and $M_{\theta_*}(\G)$ vanishes as $\sigma\to 0$, i.e.,
\begin{align}\label{eq:Hausdorff_consistency}
\lim_{\sigma^2 \to 0} d_H\Big(M_{\hat\theta(\Lambda_0,\sigma^2)}(\G), M_{\theta_*}(\G)\Big) = 0,
\end{align}
where $d_H(A,B)$ denotes the Hausdorff distance between sets $A,B \subseteq \R^{\td}$, defined as
\begin{align*}
d_H(A,B) := \max \Big\{ \sup_{a \in A} \inf_{b \in B} \|a - b\|,\ \sup_{b \in B} \inf_{a \in A} \|a - b\| \Big\}.
\end{align*}
\end{theorem}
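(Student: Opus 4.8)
The plan is to combine the one-sided Hausdorff bound from Theorem~\ref{thm:consistency_Hausdorff} with the non-self-intersection geometry of Proposition~\ref{prop:non-self_intersection} to upgrade a set inclusion into an equality. First I would establish the inclusion $M_{\bar\theta}(\G)\subseteq M_{\theta_*}(\G)$. By Theorem~\ref{thm:consistency_Hausdorff}, for every $\sigma<\sigma_0$ the one-sided distance $\sup_{g\in\G}d(Pg\theta_*, M_{\hat\theta(\Lambda_0,\sigma^2)}(\G))\leq \td^{1/2}\sigma^{1-\alpha}\to 0$, which shows $M_{\theta_*}(\G)$ lies in an arbitrarily small neighborhood of $M_{\hat\theta(\Lambda_0,\sigma^2)}(\G)$. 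In the limit, using the assumed convergence $\hat\theta(\Lambda_0,\sigma^2)\to\bar\theta$ of a representative and continuity of the map $(\theta,g)\mapsto Pg\theta$, this one-sided control should pass to the limit and yield $M_{\theta_*}(\G)\subseteq M_{\bar\theta}(\G)$. The care here is that the convergence in Theorem~\ref{thm:consistency_Hausdorff} is stated for the orbit $M_{\hat\theta}(\G)$ as a whole, while the limit $\bar\theta$ is of a single representative; I would need to argue that convergence of one point $\hat\theta(\Lambda_0,\sigma^2)\to\bar\theta$ together with uniform continuity of $g\mapsto Pg\theta$ in $\theta$ on compacta forces the whole projected orbit $M_{\hat\theta(\Lambda_0,\sigma^2)}(\G)$ to converge to $M_{\bar\theta}(\G)$ in Hausdorff distance.

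Next I would apply Corollary~\ref{cor:orbit_subset}. Since both $\theta_*$ and $\bar\theta$ are assumed generic in the sense of Proposition~\ref{prop:non-self_intersection}, and we have the inclusion $M_{\theta_*}(\G)\subseteq M_{\bar\theta}(\G)$, the corollary immediately forces equality $M_{\theta_*}(\G)=M_{\bar\theta}(\G)$, which is exactly \eqref{eq:orbit_zero_noise}. This is the step where the algebraic-geometric injectivity of the orbit map does the essential work: without the non-self-intersection property, a strict inclusion of one orbit inside another would be possible, and one-sided control alone could not rule out partial recovery. The genericity hypotheses are what license invoking the corollary for our specific pair $(\theta_*,\bar\theta)$.

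Finally, to deduce the full Hausdorff consistency \eqref{eq:Hausdorff_consistency}, I would bound both directions of $d_H\big(M_{\hat\theta(\Lambda_0,\sigma^2)}(\G),M_{\theta_*}(\G)\big)$. The forward direction $\sup_{g}d(Pg\theta_*,M_{\hat\theta}(\G))\to 0$ is already Theorem~\ref{thm:consistency_Hausdorff}. For the reverse direction I would use the just-established limiting equality $M_{\bar\theta}(\G)=M_{\theta_*}(\G)$ together with the Hausdorff convergence $M_{\hat\theta(\Lambda_0,\sigma^2)}(\G)\to M_{\bar\theta}(\G)$ established in the first paragraph, so that $\sup_{b\in M_{\hat\theta}(\G)}d(b,M_{\theta_*}(\G))=\sup_{b\in M_{\hat\theta}(\G)}d(b,M_{\bar\theta}(\G))\to 0$ as well. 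Combining the two one-sided bounds gives $d_H\to 0$.

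The main obstacle I anticipate is the first paragraph: rigorously transferring the limit from a single converging representative $\hat\theta(\Lambda_0,\sigma^2)\to\bar\theta$ to convergence of the entire projected orbit, and ensuring the one-sided bound survives the passage to the limit. The orbit is indexed over the full compact group $\G$, so I would invoke compactness of $\G$ and joint continuity of $(\theta,g)\mapsto Pg\theta$ to get uniform-in-$g$ convergence $Pg\,\hat\theta(\Lambda_0,\sigma^2)\to Pg\,\bar\theta$, which upgrades pointwise representative convergence to Hausdorff convergence of orbits. Once this continuity/compactness argument is in place, the remaining steps are purely set-theoretic consequences of the earlier results.
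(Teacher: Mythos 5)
Your overall skeleton---establish one inclusion between the projected orbits, invoke Corollary~\ref{cor:orbit_subset} to upgrade it to equality, and get the Hausdorff consistency by combining this with the orbit convergence $M_{\hat\theta(\Lambda_0,\sigma^2)}(\G)\to M_{\bar\theta}(\G)$ (which, as you say, follows from representative convergence plus the uniform Lipschitz bound $\|Pg\theta'-Pg\theta''\|\le C_P\|\theta'-\theta''\|$ over the compact group)---is exactly the paper's. But there is a genuine hypothesis mismatch in your key step: you derive the inclusion from Theorem~\ref{thm:consistency_Hausdorff}, whose conclusion requires the additional measure-regularity assumption \eqref{eq:measure_G}, i.e.\ the local Haar mass lower bound $\Lambda_0\big(\{g'\in\G:\|g-g'\|\le\eta\}\big)\ge c_0\eta^\gamma$, on top of the conditions of Theorem~\ref{thm:consistency_in_measure}. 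Theorem~\ref{thm:orbit_recovery} assumes only the conditions of Theorem~\ref{thm:consistency_in_measure} and Proposition~\ref{prop:non-self_intersection}, so \eqref{eq:measure_G} is not available to you; as written, your argument proves the statement only under a strictly stronger set of assumptions.

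The paper closes this step using only what is assumed: by Theorem~\ref{thm:consistency_in_measure}, $m\big(s/2;\theta_*,\hat\theta(\Lambda_0,\sigma^2)\big)<\sigma^{2-\alpha}\to 0$, and combined with $d_H\big(M_{\hat\theta(\Lambda_0,\sigma^2)}(\G),M_{\bar\theta}(\G)\big)\le s/2$ for all small $\sigma$ this forces $\Lambda_0\big(\{g\in\G: d(Pg\theta_*,M_{\bar\theta}(\G))>s\}\big)=0$ for every $s>0$. Since $g\mapsto Pg\theta_*$ is continuous and $M_{\bar\theta}(\G)$ is compact, a single $g_0$ with $d(Pg_0\theta_*,M_{\bar\theta}(\G))>s$ would yield an open, hence positive-Haar-measure, set of such $g$, so the measure-zero statement upgrades to the set inclusion $M_{\theta_*}(\G)\subseteq M_{\bar\theta}(\G)$. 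If you replace your appeal to Theorem~\ref{thm:consistency_Hausdorff} with this measure-to-topology argument, the rest of your proof goes through; moreover your final paragraph then simplifies, since once $M_{\bar\theta}(\G)=M_{\theta_*}(\G)$ both directions of \eqref{eq:Hausdorff_consistency} follow from the orbit Hausdorff convergence alone, with no need for Theorem~\ref{thm:consistency_Hausdorff} in the forward direction either. One further slip: you open by saying you will establish $M_{\bar\theta}(\G)\subseteq M_{\theta_*}(\G)$, but your argument (correctly) produces the reverse inclusion $M_{\theta_*}(\G)\subseteq M_{\bar\theta}(\G)$; this is harmless because Corollary~\ref{cor:orbit_subset} converts either inclusion into equality, but the statement should be made consistent.
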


\subsection{\texorpdfstring{Joint Estimation of $\theta_*$ and $\Lambda_*$}{Joint Estimation of theta* and Lambda*}}\label{sec:joint_MLE}

To address the bias introduced by the misspecified MLE and the unknown nature of the underlying data-generating measure $\Lambda_*$, we propose a joint estimation framework that explicitly parameterizes $\Lambda_*$. Motivated primarily by the cryo-EM application, we focus in this section on the setting where $\G = \SO(3)$. While our presentation emphasizes this case for clarity, the proposed methodology and analysis extend naturally to other group settings.

\subsubsection{\texorpdfstring{Parametrization of Rotational Distributions in $\mathsf{SO}(3)$}{Parametrization of Rotational Distributions in SO(3)}}\label{sec:parametrization_rotations}
Consider a possibly non-uniform rotational distribution $\lambda(\frakg)$ for $\frakg\in\mathsf{SO}(3)$ that admits a square-integrable probability density $\rho(\frakg)\in L_2(\mathsf{SO}(3),\R)$. Since the set of all the entries of real Wigner D-matrices forms an orthonormal basis for $L_2(\mathsf{SO}(3),\R)$ (see Appendix \ref{sec:real_SH}), we may expand $\der \lambda(\frakg) =\rho(\frakg)\mathrm{d} \frakg$ as 
\begin{align}\label{eq:rot_dist_expansion}
\rho(\frakg)=\sum_{p=0}^\infty \sum_{u,v=-p}^p \mathcal{B}_{puv} \mathcal{D}_{uv}^{(p)} (\frakg),
\end{align}
where $\frakg\in \mathsf{SO}(3)$, $\mathcal{D}_{uv}^{(p)} (\frakg)$ denotes the $uv$-th entry of the real Wigner D-matrix at frequency $p$, and $\mathcal{B}_{puv}\in\R$ is the corresponding coefficient. Since the integral of any density function equals one, we require
\begin{align*}
\mathcal{B}_{000}=\int\sum_{p=0}^\infty \sum_{u,v=-p}^p \mathcal{B}_{puv}\mathcal{D}_{uv}^{(p)}(\frakg)\der\frakg=\int \rho(\frakg) \der\frakg =1.
\end{align*}
One additional constraint is the non-negativity of the density, i.e.,
\begin{align*}
\sum_{p=0}^\infty \sum_{u,v=-p}^p\mathcal{B}_{puv} \mathcal{D}_{uv}^{(p)}(\frakg)\geq 0,
\end{align*}
for any $\frakg\in\mathsf{SO}(3)$. In the sequel, we restrict our attention to bandlimited densities, i.e.,
\begin{align*}
\rho(\frakg)=\sum_{p=0}^P \sum_{u,v=-p}^p \mathcal{B}_{puv} \mathcal{D}_{uv}^{(p)} (\frakg),
\end{align*}
for some band limit $P$. 

The following lemma shows that under an additional symmetry assumption—namely, \textit{in-plane uniformity}—the expansion of the rotational distribution can be further simplified. This symmetry stems from the fact that, in the context of cryo-EM, while molecules often exhibit preferred orientations in 3D space, there is generally no physical reason for them to favor particular in-plane rotations. This makes in-plane rotational invariance a natural and practically motivated modeling assumption. Consequently, we focus on the case of non-uniform rotational distributions that are invariant under in-plane rotations, as such distributions better reflect the physical constraints and experimental setup of cryo-EM imaging (see also \cite{sharon2020method}). For simplicity, we fix the image plane in cryo-EM to be perpendicular to the $z$-axis.

\begin{lemma}[In-plane uniform rotational distribution]\label{lem:inplane_uniform_density}
Assume a rotational distribution $\lambda(\frakg)$ for $\frakg\in\mathsf{SO}(3)$ admits a smooth bandlimited probability density $\rho(\frakg)$ with band limit $\bar{P}$. Further assume $\lambda(\frakg)$ is invariant to in-plane rotations and fix the image plane as perpendicular to the $z$-axis, that is,
\begin{align*}
\rho(\frakg)=\rho(\frakg z(\alpha))
\end{align*}
for all $\frakg\in \mathsf{SO}(3)$ and rotations $z(\alpha)$ of $\alpha\in\R$ radians around the $z$-axis. Then the density has the expansion
\begin{align}\label{eq:rot_dist_inplane_unif_expansion}
\rho(\frakg)=\sum_{p=0}^{\bar{P}} \sum_{u=-p}^p \mathcal{B}_{pu0} \mathcal{D}_{u0}^{(p)} (\frakg).
\end{align}
For simplicity, such density can be written as
\begin{align}\label{eq:rot_dist_inplane_unif_expansion_1}
\rho(\frakg)=\sum_{p=0}^{\bar{P}} \sum_{u=-p}^p \mathcal{B}_{pu} \mathcal{D}_{u0}^{(p)} (\frakg).
\end{align}
\end{lemma}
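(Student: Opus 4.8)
The plan is to combine the in-plane invariance with a period-averaging argument and the homomorphism property of the real Wigner D-matrices. Since $\rho(\frakg)=\rho(\frakg z(\alpha))$ for every $\alpha\in\R$, averaging over a full in-plane revolution changes nothing, so
\[
\rho(\frakg)=\frac{1}{2\pi}\int_0^{2\pi}\rho(\frakg z(\alpha))\,\der\alpha.
\]
First I would substitute the bandlimited expansion \eqref{eq:rot_dist_expansion} (truncated at band limit $\bar P$) into the right-hand side and interchange the finite double sum with the $\alpha$-integral. Because the real Wigner D-matrices form a real orthogonal matrix representation of $\SO(3)$, we have $\mathcal{D}^{(p)}(\frakg z(\alpha))=\mathcal{D}^{(p)}(\frakg)\,\mathcal{D}^{(p)}(z(\alpha))$ entrywise, that is,
\[
\mathcal{D}^{(p)}_{uv}(\frakg z(\alpha))=\sum_{w=-p}^{p}\mathcal{D}^{(p)}_{uw}(\frakg)\,\mathcal{D}^{(p)}_{wv}(z(\alpha)).
\]
This factorization confines the entire $\alpha$-dependence to the single matrix $\mathcal{D}^{(p)}(z(\alpha))$.

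The crux is therefore the averaged matrix $\tfrac{1}{2\pi}\int_0^{2\pi}\mathcal{D}^{(p)}(z(\alpha))\,\der\alpha$. A pure rotation about the $z$-axis acts diagonally in the complex Wigner basis, scaling the $n$-th coordinate by $e^{-\i n\alpha}$; passing to the real basis (Appendix~\ref{sec:real_SH}) converts each conjugate pair of indices $\pm|v|$ into a $2\times 2$ planar rotation by angle $|v|\alpha$, while leaving the $v=0$ coordinate fixed. Hence $\mathcal{D}^{(p)}(z(\alpha))$ is block diagonal, equal to $1$ in the $v=0$ slot and to a rotation block for each $v\neq0$. Integrating over a full period annihilates every oscillatory entry, since $\int_0^{2\pi}\cos(k\alpha)\,\der\alpha=\int_0^{2\pi}\sin(k\alpha)\,\der\alpha=0$ for every integer $k\geq1$, which yields
\[
\frac{1}{2\pi}\int_0^{2\pi}\mathcal{D}^{(p)}_{wv}(z(\alpha))\,\der\alpha=\delta_{w0}\,\delta_{v0}.
\]

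Putting the pieces together, the averaged coefficient collapses to $\tfrac{1}{2\pi}\int_0^{2\pi}\mathcal{D}^{(p)}_{uv}(\frakg z(\alpha))\,\der\alpha=\mathcal{D}^{(p)}_{u0}(\frakg)\,\delta_{v0}$, so every term with $v\neq0$ disappears from the expansion of $\rho$ and only the coefficients $\mathcal{B}_{pu0}$ survive, giving exactly \eqref{eq:rot_dist_inplane_unif_expansion}. I expect the main obstacle to be rigorously pinning down the action of $z(\alpha)$ on the real Wigner matrices---namely that right multiplication couples only the $\pm|v|$ pair through a rotation by $|v|\alpha$ and fixes the $v=0$ component. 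This amounts to tracking the fixed real-to-complex change of basis carefully enough to see that it respects the grading by the second index; once this block-rotation structure is established, the period-averaging and the vanishing-of-oscillatory-integrals steps are routine.
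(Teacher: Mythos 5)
Your proof is correct, and it reaches the conclusion by a genuinely different finishing move than the paper, although both arguments hinge on the identical structural fact: right multiplication by $z(\alpha)$ acts on the real Wigner D-matrix entries, within each index pair $\{v,-v\}$ of the second index, as a planar rotation by angle $v\alpha$ (and trivially on $v=0$). The paper establishes this via the explicit $4\times 4$ real-to-complex change-of-basis matrix $\mathcal{Q}_{puv}$ and the conjugation identity \eqref{eq:inplane_calc}, then exploits uniqueness of the expansion coefficients (Peter--Weyl orthogonality) to turn the pointwise invariance $\rho(\frakg)=\rho(\frakg z(\alpha))$ into the coefficient constraint \eqref{eq:constraint_inplane_B}, and finally argues that a vector fixed by every planar rotation must vanish, forcing $\mathcal{B}_{puv}=0$ for $v\neq 0$. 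You instead apply the averaging operator $\rho(\frakg)=\frac{1}{2\pi}\int_0^{2\pi}\rho(\frakg z(\alpha))\,\der\alpha$ and compute its action on each basis function: since $\frac{1}{2\pi}\int_0^{2\pi}\mathcal{D}^{(p)}_{wv}(z(\alpha))\,\der\alpha=\delta_{w0}\delta_{v0}$, the average projects the expansion directly onto the $v=0$ terms, yielding \eqref{eq:rot_dist_inplane_unif_expansion} in one stroke. The two mechanisms are logically equivalent (averaging over the in-plane circle is precisely the projection onto the invariant subspace whose complement the paper shows has no fixed vectors), but your route is slightly more economical: it needs only linearity and the vanishing of the oscillatory integrals, not the uniqueness of expansion coefficients, and it treats all $(u,v)$ pairs simultaneously. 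The single gap you flag yourself---rigorously pinning down the block-rotation form of $\mathcal{D}^{(p)}(z(\alpha))$ in the real basis---is exactly the algebra the paper performs with $\mathcal{Q}_{puv}$; it follows routinely from \eqref{eq:realWigner} together with the fact that $D^{(p)}(z(\alpha))$ is diagonal with entries $e^{-\i v\alpha}$ and that $Q^p$ couples only the indices $\pm v$, so filling it in poses no difficulty.
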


\begin{remark}
The main difference between our parametrization and that in \cite{sharon2020method} lies in the choice of basis functions for the expansion. In \cite{sharon2020method}, the complex Wigner D-matrices are used as the basis, which generally leads to complex-valued expansion coefficients. For a real-valued density $\rho$, this imposes additional conjugate symmetry constraints on those coefficients to ensure the density remains real. In contrast, we adopt the real Wigner D-matrices as the basis functions, making the expansion coefficients naturally real-valued without any extra constraints. Similarly, the structural properties induced by the in-plane uniformity assumption require corresponding analysis within this real-valued framework.  
\end{remark}

\subsubsection{Consistency of the Joint MLE}\label{sec:consistency_joint_MLE}
Recall (\ref{eq:likelihood_cryoEM}) presents the likelihood function for cryo-EM when $\lambda_*$ is known a priori. In what follows, we consider the joint estimation of both the structural parameter $\theta_*$ and the underlying rotational distribution $\lambda_*$.

\begin{assumption}\label{assumption:distribution}
Assume the true rotational distribution $\lambda_*(\frakg)$ on $\mathsf{SO}(3)$ is invariant under in-plane rotations (as described in Section~\ref{sec:parametrization_rotations}) and admits a square-integrable density $\rho_*$ with an expansion of the form~\eqref{eq:rot_dist_inplane_unif_expansion_1} of bandlimit~$\bar{P}$. Furthermore, assume its expansion coefficients are bounded in absolute value by some constant $b > 0$.
\end{assumption}

The boundedness assumption on the coefficients guarantees the density is also bounded for any $\frakg\in\mathsf{SO}(3)$.
When Assumption \ref{assumption:distribution} holds, from the previous section, the density can be written as $\rho_*(\frakg)=\sum_{p=0}^{\bar{P}} \sum_{u=-p}^p (\mathcal{B}_*)_{pu} \mathcal{D}_{u0}^{(p)} (\frakg)$, where $\mathcal{B}_*\in B$ is the coefficients of the distribution and $B$ is the parameter space defined as
\begin{align*}
B = \Big\{(\mathcal{B}_{pu}): \mathcal{B}_{00}=1, \sum_{p=0}^{\bar{P}} \sum_{u=-p}^p \mathcal{B}_{pu} \mathcal{D}_{u0}^{(p)} (\frakg)\geq 0 \text{ for all }\frakg \in \mathsf{SO}(3), \mathcal{B}_{pu}\in[-b,b]\text{ for all }p,u\Big\}.
\end{align*}
As a result, the joint likelihood function can be written as
\begin{align*}
p_{\theta,\mathcal{B}}(y) =  \int_{\mathsf{SO}(3)} \frac{1}{(2\pi\sigma^2)^{\td/2}}
\exp\left(-\frac{\|y-P \check{\mathcal{D}}(\frakg)\theta\|^2}{2\sigma^2}\right)\sum_{p=0}^{ \bar{P} } \sum_{u=-p}^p \mathcal{B}_{pu} \mathcal{D}_{u0}^{(p)}(\frakg)\mathrm{d}\frakg,
\end{align*}
This formulation enables joint estimation of $(\theta_*, \mathcal{B}_*)$ via maximum likelihood. 

We next establish the consistency of the resulting estimator under the cryo-EM model.

\begin{theorem}\label{thm:consistency_1}
Assume Assumption \ref{assumption:distribution} holds and $\theta_*$ is bounded, i.e., $\|\theta_*\|\leq r$ for some $r>0$. In addition, assume the model is identifiable up to the joint group orbit.
Define the joint MLE:
\begin{align*}
    (\hat \theta_n,\hat{\mathcal{B}}_n) = \argmin_{\|\theta\|\leq r,\mathcal{B}\in B} -\frac{1}{n} \sum_{i=1}^n\log p_{\theta,\mathcal{B}}(Y_i).
\end{align*}
Then,  in probability, we have $(\hat \theta_n,\hat{\mathcal{B}}_n)$ converges to some $(\theta',\mathcal{B}') \in  \{(\theta,\mathcal{B}):\|\theta\|\leq r, \mathcal{B}\in B, \orbit_{(\theta,\Lambda)}=\orbit_{(\theta_*,\Lambda_*)}$  where the density of $\Lambda$ is $\sum_{p=0}^{\bar{P}} \sum_{u=-p}^p \mathcal{B}_{pu} \mathcal{D}_{u0}^{(p)} (\frakg),\forall \frakg \in \mathsf{SO}(3)\}$.
\end{theorem}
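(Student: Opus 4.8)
The plan is to prove consistency via the standard theory of M-estimation over a compact parameter space, the only nonstandard ingredients being an integrable envelope for the log-likelihood and the identification of the population minimizers with the target orbit set. Write $\Theta := \{\theta\in\R^d : \|\theta\|\le r\}\times B$ for the parameter space, $\ell_n(\theta,\mathcal{B}) := -\tfrac1n\sum_{i=1}^n\log p_{\theta,\mathcal{B}}(Y_i)$ and $\ell(\theta,\mathcal{B}) := -\E_*\log p_{\theta,\mathcal{B}}(Y)$ for the empirical and population objectives, where $\E_*$ is taken under the true density $p_* := p_{\theta_*,\mathcal{B}_*}$, and let $S := \{(\theta,\mathcal{B})\in\Theta : \orbit_{(\theta,\Lambda)} = \orbit_{(\theta_*,\Lambda_*)}\}$ be the target set from the statement. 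The claim reduces to the set-distance consistency $d\big((\hat\theta_n,\hat{\mathcal{B}}_n),S\big)\to 0$ in probability. I would first identify the population minimizers: writing the difference as a Kullback--Leibler divergence,
\[
\ell(\theta,\mathcal{B}) - \ell(\theta_*,\mathcal{B}_*) = \KL\big(p_* \,\big\|\, p_{\theta,\mathcal{B}}\big) \ge 0,
\]
with equality if and only if $p_{\theta,\mathcal{B}} = p_*$ almost everywhere. By the assumed identifiability up to the joint group orbit, this equality holds precisely when $(\theta,\mathcal{B})\in S$, so $S$ is exactly the global minimizer set of $\ell$.

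Next I would verify the regularity conditions for a uniform law of large numbers. The space $\Theta$ is compact: $\{\|\theta\|\le r\}$ is closed and bounded, and $B$ is a closed subset (defined by the normalization $\mathcal{B}_{00}=1$ and the pointwise non-negativity constraint) of the box $[-b,b]^{N}$, with $N$ the finite number of coefficients at bandlimit $\bar P$. For each $y$, the map $(\theta,\mathcal{B})\mapsto\log p_{\theta,\mathcal{B}}(y)$ is continuous, and since $\rho_{\mathcal{B}}\ge 0$ integrates to one while the Gaussian kernel is strictly positive, $p_{\theta,\mathcal{B}}(y)>0$, so the logarithm is well-defined. The crucial step is the integrable envelope. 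An upper bound is immediate: boundedness of the coefficients gives $\rho_{\mathcal{B}}\le C$, whence $p_{\theta,\mathcal{B}}(y)\le C(2\pi\sigma^2)^{-\td/2}$ and $\log p_{\theta,\mathcal{B}}(y)$ is bounded above by a constant. For the lower bound, using $\|P\check{\mathcal{D}}(\frakg)\theta\|\le C_P r =: R$ together with $\int_{\SO(3)}\rho_{\mathcal{B}}(\frakg)\,\der\frakg = 1$ yields
\[
p_{\theta,\mathcal{B}}(y) \ge (2\pi\sigma^2)^{-\td/2}\exp\Big(-\frac{(\|y\|+R)^2}{2\sigma^2}\Big),
\]
so $-\log p_{\theta,\mathcal{B}}(y)\le \tfrac{\td}{2}\log(2\pi\sigma^2) + (\|y\|+R)^2/(2\sigma^2)$. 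Both bounds are dominated by an envelope $G(y)$ quadratic in $\|y\|$ with $\E_* G(Y)<\infty$ by the finite second-moment (Gaussian) tails of $Y$. The compact-parameter uniform law of large numbers then gives
\[
\sup_{(\theta,\mathcal{B})\in\Theta}\big|\ell_n(\theta,\mathcal{B}) - \ell(\theta,\mathcal{B})\big| \to 0 \quad\text{in probability.}
\]

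Finally I would combine these ingredients through a well-separated-minimum argument. Fix $\eps>0$ and let $S^\eps$ denote the $\eps$-neighborhood of $S$. Its complement in $\Theta$ is compact, and since $\ell$ is continuous with minimizer set exactly $S$, the gap $\delta := \inf_{(\theta,\mathcal{B})\notin S^\eps}\ell(\theta,\mathcal{B}) - \min_\Theta\ell$ is strictly positive. On the event that the uniform deviation is below $\delta/3$, the minimizer $(\hat\theta_n,\hat{\mathcal{B}}_n)$ cannot lie outside $S^\eps$: otherwise $\ell_n(\hat\theta_n,\hat{\mathcal{B}}_n)\ge \min_\Theta\ell + 2\delta/3$, yet evaluating $\ell_n$ at any point of $S$ (which $\ell_n$ can only exceed at its own minimizer) gives at most $\min_\Theta\ell + \delta/3$, a contradiction. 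Since this event has probability tending to one, $d\big((\hat\theta_n,\hat{\mathcal{B}}_n),S\big)\to 0$ in probability, so every limit point of the estimator lies in $S$, which is the asserted convergence to some $(\theta',\mathcal{B}')\in S$.

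The main obstacle is the uniform convergence step, and within it the integrable envelope. Because the parametrized density $\rho_{\mathcal{B}}$ may vanish on part of $\SO(3)$, a naive pointwise lower bound on the mixture density fails; the key observation is that one should integrate the Gaussian kernel against the \emph{normalized} density rather than bound $\rho_{\mathcal{B}}$ from below, so that $\int\rho_{\mathcal{B}}=1$ restores a uniform lower bound. The identification of minimizers is conceptually delicate but is supplied by the standing identifiability assumption, and the topological subtlety that convergence is to the orbit set $S$ rather than a single point is absorbed into the set-distance formulation.
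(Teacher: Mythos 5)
Your proof is correct, and it reaches the same conclusion as the paper's, but by a different (equally standard) route. The paper invokes Wald's consistency theorem (Theorem 5.14 of van der Vaart), whose verification burden is deliberately light: compactness of the parameter set $\{\|\theta\|\le r\}\times B$ (shown exactly as you do, via the closedness of the linear constraints defining $B$ inside $[-b,b]^N$), upper semicontinuity of $(\theta,\mathcal{B})\mapsto\log p_{\theta,\mathcal{B}}(y)$, and only the \emph{one-sided} integrability condition $\E\sup_{(\theta,\mathcal{B})}\log p_{\theta,\mathcal{B}}(Y)<\infty$, which follows from the trivial bound $p_{\theta,\mathcal{B}}(y)\le C(2\pi\sigma^2)^{-\td/2}$; identification of the population minimizers with the orbit set $S$ is then read off from the standing identifiability assumption, just as in your KL step. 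You instead run the uniform-law-of-large-numbers plus well-separated-minimum argument (van der Vaart Theorem 5.7 style), which additionally requires a \emph{two-sided} integrable envelope and continuity of the population objective; your extra ingredient is the lower bound $p_{\theta,\mathcal{B}}(y)\ge(2\pi\sigma^2)^{-\td/2}\exp\bigl(-(\|y\|+R)^2/(2\sigma^2)\bigr)$, obtained by bounding the Gaussian kernel uniformly over the compact orbit and using $\int\rho_{\mathcal{B}}=1$, which gives an envelope quadratic in $\|y\|$ and hence integrable under the Gaussian-tailed data distribution. The trade-off: Wald's route is shorter and would survive even if $-\E\log p_{\theta,\mathcal{B}}(Y)$ were infinite for some parameters, while your route yields the quantitatively stronger intermediate statement $\sup_{\Theta\times B}|\ell_n-\ell|\to 0$ in probability, which could be reused for rates or for consistency of near-minimizers. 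Both proofs hinge on the same two structural facts—compactness of $B$ and the assumed joint-orbit identifiability—so neither has a gap.
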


\begin{remark}
This result lays the theoretical foundation for consistent estimation via joint maximum likelihood, but the optimization problem it defines is generally non-convex and may be computationally challenging. In practice, this motivates the use of iterative or alternating optimization schemes that alternate between updating $\theta$ and the distributional parameters $\mathcal{B}$. Moreover, while the theorem assumes a specific parametrization of the distribution on $\mathsf{SO}(3)$ using Wigner D-functions, other parametrizations—such as mixtures of Fisher/Langevin distributions—can also be employed in practical applications, especially when they offer computational advantages or better modeling flexibility.   
\end{remark}

\section{Simulations}\label{sec:simulations}

We perform a numerical analysis using continuous MRA as an example (see Example~\ref{sec:model_MRA}). Figure~\ref{fig:bias} shows the average relative recovery error (over 50 trials) as a function of the noise level for the EM algorithm, which is used to solve the MLE. The ground truth signals have length $d=7$ (i.e., $L=3$), with entries independently drawn from a uniform distribution on \([0,1]\). While our theoretical analysis mainly consider the population-level setting (i.e., $n\to \infty$), we perform simulations with a finite sample size of $n=1000$. Recall $\lambda_*$ is the true data-generating rotation distribution. Denote $\mathrm{WN}$ as the wrapped normal distribution\footnote{A wrapped normal distribution $\mathrm{WN}(\mu,\sigma^2)$ is obtained by wrapping $\mathrm{N}(\mu,\sigma^2)$ around the unit circle $[0, 2\pi]$. That is, its density at $\alpha\in[0,2\pi]$ is $\frac{1}{\sqrt{2\pi}\sigma}\sum_{k=-\infty}^{+\infty}\exp(-(\alpha-\mu + 2\pi k)^2/(2\sigma^2))$.} and $\mathrm{Unif}$ as the uniform distribution.
Data are generated under four different scenarios:
\begin{itemize}{
  \item {Blue dashed curve:} without projection, $\lambda_*=\mathrm{Unif}[0, 2\pi]$.
  \item {Red dashed curve:} without projection, $\lambda_*=\mathrm{WN}(0, 0.01)$.
  \item {Blue solid curve:} with a two-fold projection,  $\lambda_*=\mathrm{Unif}[0, 2\pi]$.
  \item {Red solid curve:} with a two-fold projection, $\lambda_*=\mathrm{WN}(0, 0.01)$.}
\end{itemize}
To further examine how the degree of deviation of the underlying true rotation distribution from the uniform distribution under projection, we include additional solid curves in green, pink, and brown, corresponding to the same two-fold projection setting but with $\lambda_*$ being $\mathrm{WN}(0, 0.1)$, $\mathrm{WN}(0, 0.5)$, and $\mathrm{WN}(0, 0.8)$, respectively.

\begin{figure}[ht]
  \centering
  \includegraphics[width=0.8\textwidth]{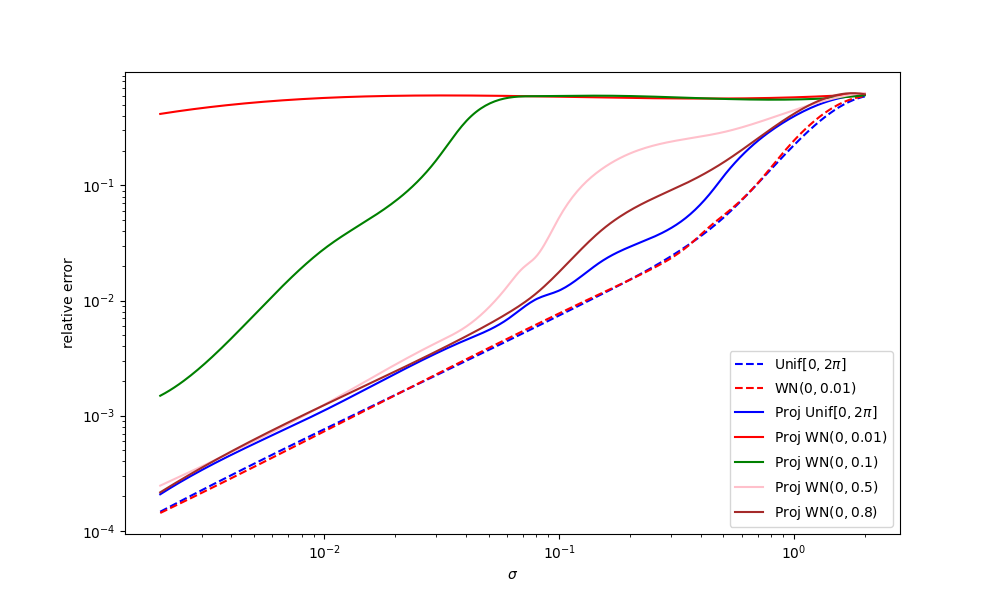}
  \caption{Relative error as a function of the noise level in estimating a signal of length $d=7$ (i.e., $L=3$) with entries independently drawn from the uniform distribution on $[0,1]$. The signal is recovered from $n=1000$ continuous MRA observations, as described in Example~\ref{sec:model_MRA}, using the EM algorithm with a uniform rotation prior in all cases. The figure includes both the setting without projection (dashed lines) and the setting with an additional two-fold projection (solid lines). The blue dashed line represents the correctly specified setting without projection, where both the data-generating rotation distribution and the EM prior are uniform on $[0, 2\pi]$. The red dashed line corresponds to a misspecified setting without projection, in which the EM algorithm assumes a uniform prior over $[0, 2\pi]$, while the true rotation distribution is a wrapped normal distribution $\mathrm{WN}(0, 0.01)$. The blue solid line shows the correctly specified setting with a two-fold projection, where both the true rotation distribution and the EM prior are uniform. The red solid line corresponds to a misspecified setting with a two-fold projection, where the EM algorithm assumes a uniform prior, but the true rotation distribution is $\mathrm{WN}(0, 0.01)$. To illustrate the effect of increasing deviation from the uniform distribution, the green, pink, and brown solid lines are included, corresponding to settings where the true rotation distributions are $\mathrm{WN}(0, 0.1)$, $\mathrm{WN}(0, 0.5)$, and $\mathrm{WN}(0, 0.8)$, respectively. 
}\label{fig:bias}
\end{figure}

The relative recovery error is defined as
$
\min_{g \in \G} {\|\hat{\theta} - g \theta_*\|}/{\|\theta_*\|},
$
where $\hat{\theta}$ is the estimated signal, $\theta_*$ is the ground truth signal. This corresponds to the normalized orbit distance between the estimate and the ground truth under the group action. In all cases, the EM algorithm uses a uniform prior over $[0,2\pi]$ for the rotation distribution, regardless of the true underlying distribution. Thus, the two blue curves correspond to settings where the assumed prior matches the true distribution (i.e., correctly specified), while the two red curves correspond to misspecified settings where the true distribution is a wrapped normal. The EM iterations are terminated when the difference between consecutive iterations dropped below $10^{-6}$.

In the correctly specified setting (the two blue curves), we observe a clear phase transition in the slope of the recovery error curve around \( \sigma = 0.5 \). Specifically, in the low-noise regime (\( \sigma \lesssim 0.5 \)), the error remains small and grows slowly with increasing noise, while in the high-noise regime (\( \sigma \gtrsim 0.5 \)), the error exhibits a steeper increase. This phenomenon is consistent with the empirical observation made in the seminal work of~\cite{sigworth1998maximum} in the context of cryo-EM, and is further supported by the theoretical results of~\cite{perry2019sample}, which established that in the MRA setting, the minimax estimation error scales as $\sigma^3$ in the high-noise regime, in contrast to the typical $\sigma$-scaling in models without hidden group structure.

For the setting without projection (the two dashed curves), the recovery error under the misspecified rotation prior (red dashed) closely matches that of the correctly specified case (blue dashed) across all noise levels. This suggests that, in the absence of projection, misspecifying the rotation prior has a negligible effect on recovery performance, which numerically verifies our results in Section \ref{sec:landscape_without_projection}.

In contrast, when projection is applied (the solid curves), the effect of misspecification becomes substantial. The red solid curve exhibits significantly higher recovery error than the blue solid curve, indicating that rotation distribution misspecification introduces a non-negligible bias in the presence of projection—even under very low-noise conditions. In this setting, the true rotation distribution is a wrapped normal distribution $\mathrm{WN}(0, 0.01)$. This distribution is sharply concentrated around $0$ and $2\pi$, representing a strong deviation from the assumed uniform prior. 
As a result, a recovery bias emerges, which numerically verifies our results in Section \ref{sec:landscape_with_projection}.

The three other solid curves (green, pink, and brown) further examine the effect of deviation from the uniform prior. As the variance of the wrapped normal distribution increases—that is, as the underlying rotation distribution $\lambda_*$ becomes closer to uniform—the relative recovery error decreases and gradually approaches that of the correctly specified case (the blue solid curve). Moreover, the region where the misspecification-induced bias is most apparent becomes narrower. This effect is most significant in the high-noise regime: at low-noise levels, the bias is reduced due to favorable signal-to-noise conditions (as predicted by our theory in Section \ref{sec:landscape_with_projection}), while when the noise level is extremely high, the relative recovery error approaches the maximum possible value. These results suggest that the severity of the bias introduced by a misspecified MLE depends jointly on the noise level and the extent of deviation between the underlying rotation distribution and the assumed uniform prior.

\section{Discussion}\label{sec:discussion}

This work reveals a fundamental dichotomy in the behavior of misspecified MLE for group orbit recovery: the effectiveness of the estimator depends critically on whether the measurement operator—such as the tomographic projection in cryo-EM—preserves or distorts the geometric invariance inherent in the data. In non-projective models, the misspecified MLE exhibits surprising resilience—it can recover the true signal exactly, even when using an incorrect uniform distribution in place of the true group measure. However, under projection, such misspecification leads to systematic bias that becomes increasingly significant at higher noise levels.

To combat this bias in cryo-EM reconstruction, we introduce a joint estimation framework that explicitly parameterizes the orientation distribution. This approach enables simultaneous recovery of the molecular structure and the underlying rotation distribution. While our current implementation focuses on a specific parameterization using real Wigner matrices, the framework’s flexibility invites exploration of alternative parameterizations; developing their efficient computational realizations remains a vital research frontier. More significantly, by accurately modeling the underlying group measure, this joint paradigm eliminates systematic biases inherent in uniform-prior methods and shows  potential for enhancing reconstruction fidelity under high-noise conditions prevalent in experimental data.

Beyond computational considerations, deeper theoretical questions arise. While our analysis shows that the bias induced by misspecification diminishes as the noise level decreases under projection, an open challenge is to quantitatively characterize how the discrepancy between the group measure assumed in the recovery method and the true data-generating one governs the resulting estimation bias. Our projected MRA simulations provide initial empirical evidence, but a rigorous theoretical framework that relates distributional divergence to bias magnitude across different noise regimes remains essential. Such a framework would offer valuable insights both theoretically and for practical applications. Moreover, the theoretical techniques developed in this work hold promise for addressing more general continuous Gaussian mixture models and latent variable frameworks, for example manifold learning, nonlinear factor models, and emerging generative models like diffusion models.

\begin{appendix}
\numberwithin{equation}{section}

\section{Additional Examples in the Generalized Orbit Recovery Model}\label{sec:additional_examples}

This appendix presents additional examples that are more technically involved, including spherical registration, unprojected cryo-EM and cryo-ET, and projected cryo-EM.

\subsection{Spherical Registration}\label{sec:model_SR_appendix}
We consider estimating a function on the unit sphere $f:\mathcal{S}^2\to \R$. We parametrize the unit sphere by the latitude $\varphi_1\in[0,\pi]$ and longitude $\varphi_2\in[0,2\pi)$, and represent the rotation of $f$ by an element $\frakg\in\mathsf{SO}(3)$ as $f_\frakg(\varphi_1,\varphi_2)= f(\frakg^{-1}\cdot (\varphi_1,\varphi_2))$. Consider we observe samples 
\begin{align}\label{eq:model_SR1}
f_\frakg(\varphi_1,\varphi_2)\der (\varphi_1,\varphi_2)+\sigma \der W(\varphi_1,\varphi_2),
\end{align}
where $\frakg\sim \lambda_*$ for some possibly non-Haar measure $\lambda_*$ on $\mathsf{SO}(3)$, $\der (\varphi_1,\varphi_2)$ denotes the surface area measure on $\mathcal{S}^2$, and $\der W(\varphi_1,\varphi_2)$ denotes a standard Gaussian process on $\mathcal{S}^2$. As detailed in Appendix \ref{sec:real_SH}, the real spherical harmonics form a complete orthonormal basis for $L_2(\mathcal{S}^2,\R)$. Assume $f\in L_2(\mathcal{S}^2, \R)$ and admits a bandlimited representation under the real spherical harmonics (see \cite[Section D.2]{fan2024maximum})
\begin{align}\label{eq:model_SR2}
f(\varphi_1,\varphi_2)=\sum_{l=0}^L\sum_{m=-l}^l \theta_m^{(l)} \mathbcal{y}_{lm}(\varphi_1,\varphi_2),
\end{align}
where $L\geq 1$ is the bandlimit and $\{\mathbcal{y}_{lm}:l\geq 0, -l\leq m\leq l\}$ are the real spherical harmonics (see Appendix \ref{sec:real_SH}). Notably, the subspace of such bandlimited functions is closed under $\mathsf{SO}(3)$ rotations over $\mathcal{S}^2$. The observation model \eqref{eq:model_SR1} is equivalent to observing all the coefficients of $f_\frakg$ in this basis of real spherical harmonics. Writing
\begin{align*}
\theta = (\theta_m^{(l)}:0\leq l\leq L~\text{and}~-l\leq m\leq l)\in\R^d,\quad d=(L+1)^2,
\end{align*}
for the vector of real spherical harmonic coefficients, the rotation $\frakg$ acting on $f$ can be represented in the space of these coefficients by the matrix multiplication $\theta \mapsto g\theta$ where $g\in \G$ is the block-diagonal matrix
\begin{align*}
g = \mathcal{D}(\frakg)=\bigoplus_{l=0}^L \mathcal{D}^{(l)}(\frakg)\in \R^{d\times d}, \quad\text{ for }\frakg\in\mathsf{SO}(3),
\end{align*}
with $\mathcal{D}^{(l)}(\frakg)$ the real Wigner D-matrix at frequency $l$ as defined in \eqref{eq:realWigner}. For the proof details, we refer the readers to Appendix \ref{sec:real_Wigner} and \cite[Lemma D.3]{fan2024maximum}. 

Thus, this observation model for the real spherical harmonic coefficients of $f$ is a special case of \eqref{eq:projectedmodel} where $P=\Id$, $\G$ is isomorphic to $\mathsf{SO}(3)$, and $\Lambda_*$ is the unique probability measure on $\G$ induced by the isomorphism between $\G=\{\mathcal{D}(\frakg):\frakg\in\mathsf{SO}(3)\}$ and $\mathsf{SO}(3)$, with respect to $\lambda_*$. The likelihood function can be written as 
\begin{align*}
    p_{\theta}(y) = 
\int_{\mathsf{SO}(3)} \frac{1}{(2\pi\sigma^2)^{d/2}}
\exp\left(-\frac{\|y- \mathcal{D}(\frakg)\theta\|^2}{2\sigma^2}\right)\der
\lambda_*(\frakg).
\end{align*}

\subsection{Unprojected Cryo-EM and Cryo-ET}\label{sec:model_cryoET_appendix}

We consider estimating a function $f:\R^3\to\R$, and the action of $\mathsf{SO}(3)$ on $\R^3$ is given by rotation about the origin. Write $f_\frakg
(x)=f(\frakg^{-1}\cdot x)$ for $x\in\R^3$ and $\frakg\in\mathsf{SO}(3)$. Consider we observe samples
\begin{align}\label{eq:model_cryoET}
f_\frakg(x)\der x+\sigma \der W(x),
\end{align}
where $\frakg\sim \lambda_*$ for some possibly non-Haar measure $\lambda_*$ on $\mathsf{SO}(3)$, and $\der W(x)$ denotes a standard Gaussian process on $\R^3$. This is a simplified, unprojected model of single-particle reconstruction in cryo-EM. In practice, a more realistic model would include steps such as contrast transfer function (CTF) correction, centering due to shifts, and tomographic projection (also see \cite{singer2018mathematics}). In the next section, we will incorporate the tomographic projection. Additionally, this model may be of independent interest in the context of cryo-ET, with no ``missing wedge problem'' (see \cite{turk2020promise, watson2024advances, zhang2019advances}).

Our model setup mainly follows \cite[Section 4.2]{fan2024maximum}, and is also similar to that in \cite[Section 4.6]{bandeira2023estimation}. We parametrize both the original signal domain and its Fourier domain by spherical coordinates $(r,\varphi_1,\varphi_2)$ with radius $r\geq 0$, latitude $\varphi_1\in[0,\pi]$, and longitude $\varphi_2\in[0,2\pi)$. Unlike the spherical registration model (Section \ref{sec:model_SR}), which represents $f:\mathcal{S}^2\to\R$ using an orthonormal basis of spherical harmonics, here we model $f:\R^3\to\R$ through a bandlimited representation for its Fourier transform $\hat{f}:\R^3\to\C$. Specifically, assume that $f\in L_2(\R^3,\R)$ and we expand $\hat{f}$ in a complete orthonormal complex basis $\{\hat{h}_{lsm}\}$, where each basis function is the product of the complex spherical harmonics $y_{lm}(\varphi_1,\varphi_2)$ and a set of radial basis functions $j_s(r)$ defined on $[0,\infty)$ satisfying completeness and orthonormality in the space of square-integrable functions with respect to the measure $r^2\der r$. By the Parseval relation, the inverse Fourier transforms $\{h_{lsm}\}$ constitute a complete orthonormal complex basis in the original signal domain of $f$. Alternatively, one can model $\hat{f}$ as being supported on a ball of finite radius $R$. In this case, the radial basis functions can be chosen as a complete orthonormal basis supported on $[0,R]$ with respect to the measure $r^2\der r$ (see e.g. \cite{kileel2024fast}). For our purpose, the specific choice of radial basis functions does not affect the formulation or results. 

Fix integer bandlimits $L\geq 1$ and $S_0,\ldots,S_L\geq 1$ and define the index set 
\begin{align}\label{eq:index_set}
\mathcal{I}=\{(l,s,m):0\leq l\leq L,~ 1\leq s\leq S_l,~-l\leq m\leq l\},~~~~~~d=|\mathcal{I}|=\sum_{l=0}^L (2l+1)S_l.
\end{align}
Assume that $f$ is \textit{$(L,S_0,\ldots,S_L)$-bandlimited with respect to the basis $\{h_{lsm}\}$}, meaning its Fourier transform admits a finite basis representation in terms of $\{\hat{h}_{lsm}\}$:
\begin{align}\label{eq:expansion_cryoET_Fourier}
\hat{f}=\sum_{(l,s,m)\in\mathcal{I}} u_{m}^{(ls)} \hat{h}_{lsm},~~~~~~u=(u_m^{(ls)}:(l,s,m)\in\mathcal{I})\in\C^d,
\end{align}
or equivalently itself admits a finite basis representation in terms of $\{h_{lsm}\}$:
\begin{align}\label{eq:expansion_cryoET}
f=\sum_{(l,s,m)\in\mathcal{I}} u_{m}^{(ls)} h_{lsm}.
\end{align}
This corresponds to modeling $\hat{f}$ up to the spherical frequency $L$, and up to the radial frequency $S_l$ for each spherical component $l=0,\ldots,L$. For real-valued $f$, setting $u=\tilde{Q}^\T \theta$, where the unitary transformation matrix $\tilde{Q}\in\C^{d\times d}$ is explicitly defined in \eqref{eq:Qtilde}, we further obtain a real basis representation for the original signal:
\begin{align}\label{eq:expansion_cryoET_real}
f=\sum_{(l,s,m)\in\mathcal{I}} \theta_{m}^{(ls)} \mathbcal{h}_{lsm},~~~~~~\theta=(\theta_m^{(ls)}:(l,s,m)\in\mathcal{I})\in\R^d,
\end{align}
for a complete orthonormal real basis $\{\mathbcal{h}_{lsm}\}$ of $L_2(\R^3,\R)$. Here we order both the complex and real coefficients appropriately according to the tuple $(l,s,m)$ to form the vectors $u$ and $\theta$, such that both the unitary transformation $\tilde{Q}$ and the following rotation representation exhibit a block-diagonal structure. Further details of the setup can be found in Appendix \ref{sec:function_basis_cryoET}.

The rotation $\frakg$ acting on $f$ can then be represented in the space of the real coefficients by the matrix multiplication $\theta\mapsto g\theta$ where $g\in \G$ is the block-diagonal matrix
\begin{align}
g = \check{\mathcal{D}}(\frakg)=\bigoplus_{l=0}^L \bigoplus_{s=1}^{S_l} \check{\mathcal{D}}^{(ls)}(\frakg)\in \R^{d\times d}, \quad\text{ for }\frakg\in\mathsf{SO}(3), \label{eq:cryoEM_rotation}
\end{align}
with $\check{\mathcal{D}}^{(ls)}(\frakg)$ the orthogonal transformation matrix for the $(l,s)$-th spherical shell component for $0\leq l\leq L$ and $1\leq s\leq S_l$. For the proof details, we refer the readers to Appendix \ref{sec:rot_representation_cryoET} and \cite[Lemma D.5]{fan2024maximum}. 

Thus, this observation model for the real coefficients of $f$ under the complete orthonormal basis $\{\mathbcal{h}_{lsm}\}$ is a special case of \eqref{eq:projectedmodel}, where $P=\Id$, $\G$ is isomorphic to $\mathsf{SO}(3)$, and $\Lambda_*$ is the unique probability measure on $\G$ induced by the isomorphism between $\G=\{\check{\mathcal{D}}(\frakg):\frakg\in\mathsf{SO}(3)\}$ and $\mathsf{SO}(3)$, with respect to $\lambda_*$. The likelihood function can be written as 
\begin{align*}
    p_{\theta}(y) = 
\int_{\mathsf{SO}(3)} \frac{1}{(2\pi\sigma^2)^{d/2}}
\exp\left(-\frac{\|y- \check{\mathcal{D}}(\frakg)\theta\|^2}{2\sigma^2}\right)\der
\lambda_*(\frakg).
\end{align*}

\subsection{(Projected) Cryo-EM}\label{sec:model_cryoEM_appendix}

We extend the model from the previous section to include the tomographic projection that occurs in the practice of cryo-EM. As before, we aim to estimate a function $f:\R^3\to \R$. In this projected model, the signal $f$ undergoes a rotation in $\mathsf{SO}(3)$ about the origin, followed by the tomographic projection. The observed samples on the projection domain $\R^2$ are given by
\begin{align}\label{eq:model_cryoEM}
(\Pi \circ f_\frakg)(x) \der x+\sigma \der W(x)
\end{align}
for $x=(x_1,x_2)\in\R^2$ and $\frakg\in\mathsf{SO}(3)$, with $\frakg\sim \lambda_*$ for some possibly non-Haar measure $\lambda_*$ on $\mathsf{SO}(3)$. The term $\der W(x)$ denotes a standard Gaussian white noise process on $\R^2$. The tomographic projection $\Pi$ is defined as
\begin{align*}
(\Pi\circ f_\frakg)(x_1,x_2) = \int_\R f_\frakg(x_1,x_2,x_3)\der x_3.
\end{align*}
We assume that $f\in L_2(\R^3,\R)$, and this implies that $\Pi\cdot f_\frakg\in L_2(\R^2,\R)$.

Our model setup mainly follows \cite[Section 4.3]{fan2024maximum}; also see \cite[Section 4.6]{bandeira2023estimation}. We again parametrize both the original signal domain and its Fourier domain by spherical coordinates $(r,\varphi_1,\varphi_2)$, where $r\geq 0$ is the radius, $\varphi_1\in[0,\pi]$ is the latitude, and $\varphi_2\in[0,2\pi)$ is the longitude. We also parametrize both the projection domain and its Fourier domain by polar coordinates $(r,\varphi_2)$, where $r\geq 0$ is the radius and $\varphi_2\in [0,2\pi)$ is the angle. We model the 2-D Fourier transform of the tomographic projection $\Pi\cdot f$ on $\R^2$ using a complete orthonormal basis $\{\hat{q}_{sm}\}$ in $L_2(\R^2,\C)$, where each basis function is the product of the complex Fourier basis function $(2\pi)^{-1/2} \exp(\i m\varphi_2)$ and a set of radial functions $\tilde{j}_s(r)$ \footnote{In Section \ref{sec:model_cryoET}, the radial functions $j_s(r)$ were orthonormal with respect to the measure $r^2\der r$. Here, we use the measure $r\der r$ instead, to ensure that the corresponding basis functions form a complete and orthonormal basis in the observation space. If $\{\tilde{j}_s: s=1,\ldots,S\}$ and $\{j_s: s=1,\ldots,S\}$ have the same linear span, then the two spaces of bandlimited functions \eqref{eq:expansion_cryoET} and \eqref{eq:expansion_cryoEM} coincide.}. The 3-D Fourier transform of the original signal $f$ is then represented using a corresponding basis $\{\hat{\tilde{h}}_{lsm}\}$ in $L_2(\R^3,\C)$, where each basis function is the product of the complex spherical harmonics $y_{lm}(\varphi_1,\varphi_2)$ and the same set of radial functions $\tilde{j}_s(r)$. These two bases are naturally connected through the Fourier slice theorem. 

Assume that $f$ is $(L,S_0,\ldots,S_L)$-bandlimited with respect to the basis $\{\tilde{h}_{lsm}\}$, leading to the representation:
\begin{align}\label{eq:expansion_cryoEM}
f = \sum_{(l,s,m)\in\mathcal{I}} u_{m}^{(ls)} \tilde{h}_{lsm} = \sum_{(l,s,m)\in\mathcal{I}} \theta_{m}^{(ls)} \tilde{\mathbcal{h}}_{lsm},
\end{align}
where
\begin{align*}
u=(u_m^{(ls)}:(l,s,m)\in\mathcal{I})\in\C^d,~~~~~~\theta=(\theta_m^{(ls)}:(l,s,m)\in\mathcal{I})\in\R^d,
\end{align*}
with the index set $\mathcal{I}$ defined in \eqref{eq:index_set}. Here, $\{\tilde{h}_{lsm}\}$ are the inverse Fourier transforms of $\{\hat{\tilde{h}}_{lsm}\}$, and $\{\tilde{\mathbcal{h}}_{lsm}\}$ are the corresponding real basis functions obtained by the same basis transformation as in \eqref{eq:complex_real_transformation}. Applying the Fourier slice theorem, its tomographic projection yields a bandlimited representation:
\begin{align*}
\Pi \circ f = \sum_{(s,m)\in\tilde{\mathcal{I}}} \tilde{u}_m^{(s)} q_{sm} = \sum_{(s,m)\in\tilde{\mathcal{I}}} \tilde{\theta}_m^{(s)} \mathbcal{q}_{sm},
\end{align*}
where
\begin{align*}
\tilde{u}=(\tilde{u}_m^{(s)}:(s,m)\in\mathcal{I})\in\C^{\td},~~~~~~\tilde{\theta}=(\tilde\theta_m^{(s)}:(s,m)\in\mathcal{I})\in\R^{\td}.
\end{align*}
Here, $\{q_{sm}\}$ are the inverse Fourier transforms of $\{\hat{q}_{sm}\}$, and $\{\mathbcal{q}_{sm}\}$ are the corresponding real basis functions on $\R^2$. The index set $\tilde{\mathcal{I}}$ is given by
\begin{align}\label{eq:index_set_projection}
\tilde{\mathcal{I}} = \{(s,m): 1\leq s\leq S,~-L\leq m\leq L\},~~~~~~\tilde{d}=|\tilde{\mathcal{I}}|= S(2L+1) 
\end{align}
for $S=\max(S_0,\ldots,S_L)$. We can then translate $\Pi$ into a linear map $P$, which maps $\theta\in\R^d$ to $\tilde\theta\in\R^{\td}$, whose explicit form is given in \eqref{eq:tomographic_proj_representation}. Thanks to the completeness and orthonormality of $\{\mathbcal{q}_{sm}\}$, the model \eqref{eq:model_cryoEM} is equivalent to observing the coefficients of $\Pi\circ f$ in this basis with i.i.d. $\mathrm{N}(0,\sigma^2)$ noise. Further details can be found in Appendix \ref{sec:function_basis_cryoEM}.

The rotation $\frakg$ acting on $f$ follows the same form as in the unprojected cryo-EM scenario. Therefore, the observation model for the real coefficients of $f$ under the complete orthonormal basis $\{\mathbcal{q}_{sm}\}$ represents a special case of \eqref{eq:projectedmodel},
where $P$ is given in \eqref{eq:tomographic_proj_representation}, extending the unprojected setup. The likelihood function can be written as 
\begin{align*}
    p_{\theta}(y) = 
\int_{\mathsf{SO}(3)} \frac{1}{(2\pi\sigma^2)^{\td/2}}
\exp\left(-\frac{\|y- P\check{\mathcal{D}}(\frakg)\theta\|^2}{2\sigma^2}\right)\der
\lambda_*(\frakg).
\end{align*}

\section{Proofs}\label{sec:proofs}
This appendix contains all proofs except that of Theorem \ref{thm:withoutProj}.

\subsection{Proofs of Theorems \ref{lem:GlobalMin_CorrectLikelihood} and \ref{thm:bias_existence}}
\begin{proof}[Proof of Theorem \ref{lem:GlobalMin_CorrectLikelihood}]
Note that 
\begin{align*}
\ell(\theta;\Lambda_*,\sigma^2)=& -\E_{\theta_*, \Lambda_*} \log p_\theta(y; \Lambda_*,\sigma^2)\\
=& -\E_{\theta_*, \Lambda_*} \log p_{\theta_*}(y; \Lambda_*,\sigma^2) + \E_{\theta_*, \Lambda_*} \log \frac{p_{\theta_*}(y; \Lambda_*,\sigma^2)}{p_{\theta}(y; \Lambda_*,\sigma^2)},
\end{align*}
where the second term is the Kullback-Leibler divergence between densities $p_{\theta_*}(y; \Lambda_*,\sigma^2)$ and $p_{\theta}(y; \Lambda_*,\sigma^2)$ and is always non-negative. Thus, a point $\theta\in\R^d$ is a global minimizer of $\ell(\theta;\Lambda_*,\sigma^2)$ if and only if $p_{\theta_*}(y; \Lambda_*,\sigma^2)=p_{\theta}(y; \Lambda_*,\sigma^2)$. By \cite[Theorem 2(2)]{nguyen2013convergence} and noting that $Pg\theta$ and $Pg\theta_*$ always lie in a compact subset of $\R^{\td}$, we conclude the claim.
\end{proof}

\begin{proof}[Proof of Theorem \ref{thm:bias_existence}]
Let $\tilde\lambda$ be any measure on $U$ such that $\eta$ is measurable and denote its corresponding pushforward measure under $\eta$ to be $\Lambda$ on $\G$. By direct calculations, the gradient of the general misspecified negative population likelihood $\ell(\theta;\Lambda_0,\Lambda,\sigma^2)$ defined in \eqref{eq:population_likelihood_general} with respect to $\theta$ is given by
\begin{align*}
&\nabla_{\theta}\ell(\theta;\Lambda_0,\Lambda,\sigma^2)\\
=&\frac{1}{\sigma^2}\cdot\E_{\theta_*,\Lambda} \frac{\int_{\G}
\exp\Big(-\frac{\|y- P g\theta\|^2}{2\sigma^2}\Big) \cdot g^\T P^\T \Big(P g\theta-y\Big)\der
\Lambda_0(g)}{\int_{\G}
\exp\Big(-\frac{\|y- P g\theta\|^2}{2\sigma^2}\Big)\der
\Lambda_0(g)}\\
=&\frac{1}{\sigma^2}\cdot \E_{\tilde{g}\sim \Lambda} \E_{y\sim \mathrm{N}(P\tilde{g}\theta_*,\sigma^2\Id_{\td})} \frac{\int_{\G}
\exp\Big(-\frac{\|y- P g\theta\|^2}{2\sigma^2}\Big) \cdot g^\T P^\T \Big(P g\theta-y\Big)\der
\Lambda_0(g)}{\int_{\G}
\exp\Big(-\frac{\|y- P g\theta\|^2}{2\sigma^2}\Big)\der
\Lambda_0(g)} .
\end{align*}
Applying the change of variable formula, 
\begin{align*}
&\sigma^2\cdot \nabla_{\theta}\ell(\theta;\Lambda_0,\Lambda,\sigma^2)|_{\theta=\theta_*} \\
=& \E_{\tilde{u}\sim \tilde\lambda}  \E_{y\sim \mathrm{N}(P\eta(\tilde{u})\theta_*,\sigma^2\Id_{\td})}   
\frac{\int_{U}
\exp\Big(-\frac{\|y- P \eta(u)\theta_*\|^2}{2\sigma^2}\Big) \cdot \eta(u)^\T P^\T \Big(P \eta(u)\theta_*-y\Big)\der
\tilde\lambda_0(u)}{\int_{U}
\exp\Big(-\frac{\|y- P \eta(u)\theta_*\|^2}{2\sigma^2}\Big)\der
\tilde\lambda_0(u)} \\
=&\E_{\tilde{u}\sim \tilde\lambda} H(\tilde{u};\theta_*,\sigma^2).
\end{align*}
Note that $H(\tilde{u};0,\sigma^2)=0$ for any $\tilde{u}\in U$ and $\sigma^2$. Moreover, by assumption, $H$ is real analytic function in $(v,\theta,\tau^2)$ over the open and connected domain $\tilde{U}\times \R^d\times \R_+\subset \R^{m+d+1}$. Since there exists at least one point $(u_0,\theta_0,\tau_0^2)\in \tilde{U}\times \R^d\times \R_+$ such that $H(u_0;\theta_0,\tau_0^2)\not=0$ (by the previous argument, we know that $\theta_0\not=0$), the identity theorem for real analytic functions implies that the set of points where $H=0$ is a Lebesgue measure zero subset of this domain. In other words, for generic $\tilde{u}\in\tilde{U}$, $\theta_*\in\R^d\setminus\{0\}$, and $\sigma^2\in\R_+$, we have $H(\tilde{u};\theta_*,\sigma^2)\not=0$. Consequently, since $H$ is real analytic (and hence continuous) in $\tilde{u}$ over $\tilde{U}$ for fixed $\theta_*$ and $\sigma^2$, we can construct a measure $\tilde\lambda$ on $\tilde{U}$ (and extend it to $U$) such that $\E_{\tilde{u}\sim \tilde\lambda} H(\tilde{u};\theta_*,\sigma^2)\not=0$ for given generic $\theta_*\in\R^d\setminus\{0\}$ and $\sigma^2$. Finally, setting $\Lambda_*$ as the pushforward measure of $\tilde\lambda$ under $\eta$, we obtain $\nabla_{\theta}\ell(\theta;\Lambda_0,\sigma^2)|_{\theta=\theta_*}\not=0$. This establishes the desired result.
\end{proof}

\subsection{Proofs of Results in Section \ref{sec:quantification_bias}}
\begin{proof}[Proof of Theorem \ref{thm:consistency_in_measure}]
Write $\ell(\theta;\Lambda_0,\sigma^2)$ as $\ell(\theta;\sigma^2)$ for simplicity. Denote $\phi(y) = \frac{1}{(2\pi)^{\td/2}}\exp\left(-{\|y\|^2}/{2}\right)$ for any $ y\in\R^{\td}$. Then $\ell(\theta;\sigma^2)$ can be expressed as
\begin{align*}
    \ell(\theta;\sigma^2) &= -\E_{\theta_*,\Lambda_*}\log\int_{\G} \frac{1}{\sigma^{\td}}  \phi\left( \frac{y-Pg\theta}{\sigma}\right) \der \Lambda_0(g)\\
    &=-\int \left(\int_{\G} \frac{1}{\sigma^{\td}}  \phi\left( \frac{y-Pg\theta_*}{\sigma}\right) \der \Lambda_*(g)\right)  \log \left(\int_{\G} \frac{1}{\sigma^{\td}}  \phi\left( \frac{y-Pg\theta}{\sigma}\right) \der \Lambda_0(g)\right) \der y.
\end{align*}

First, we give an upper bound for $ \ell(\theta_*;\sigma^2)$:
\begin{align}\label{eq:upper
_bound_loglikelihood}
\ell(\theta_*;\sigma^2) &= \int \left(\int_{\G} \frac{1}{\sigma^{\td}}  \phi\left( \frac{y-Pg\theta_*}{\sigma}\right) \der \Lambda_*(g)\right)  \log \frac{\int_{\G} \frac{1}{\sigma^{\td}}  \phi\left( \frac{y-Pg\theta_*}{\sigma}\right) \der \Lambda_*(g)}{\int_{\G} \frac{1}{\sigma^{\td}}  \phi\left( \frac{y-Pg\theta_*}{\sigma}\right) \der \Lambda_0(g)} \der y \notag\\
&\quad - \int \left(\int_{\G} \frac{1}{\sigma^{\td}}  \phi\left( \frac{y-Pg\theta_*}{\sigma}\right) \der \Lambda_*(g)\right)  \log \frac{\int_{\G} \frac{1}{\sigma^{\td}}  \phi\left( \frac{y-Pg\theta_*}{\sigma}\right) \der \Lambda_*(g)}{\phi(y)} \der y \notag\\
&\quad - \int \left(\int_{\G} \frac{1}{\sigma^{\td}}  \phi\left( \frac{y-Pg\theta_*}{\sigma}\right) \der \Lambda_*(g)\right)  \log \phi(y) \der y \notag\\
&\leq \int \left(\int_{\G} \frac{1}{\sigma^{\td}}  \phi\left( \frac{y-Pg\theta_*}{\sigma}\right) \der \Lambda_*(g)\right) (\log C_2) \der y - 0 \notag\\
&\quad - \int \left(\int_{\G} \frac{1}{\sigma^{\td}}  \phi\left( \frac{y-Pg\theta_*}{\sigma}\right) \der \Lambda_*(g)\right)  \left(-\frac{\td}{2}\log(2\pi) -\frac{\|y\|^2}{2}\right) \der y \notag\\
&\leq \log C_2 -  \left(-\frac{\td}{2}\log(2\pi) -\left(\sup_{g\in\G}\|Pg\theta_*\|^2+\sigma^2\right)\right) \notag\\
&\leq \log C_2 +\frac{\td}{2}\log(2\pi)+C_P^2\cdot\|\theta_*\|^2+\sigma^2.
\end{align}
Here, the first inequality follows from the fact that $Q(g) \leq C_2$ for $\Lambda_0$-almost every $g$, with $C_2 \geq 1$, and from the non-negativity of the Kullback–Leibler divergence. The second inequality follows from the following argument: let $\mathrm{g} \sim \Lambda_*$ and $\varepsilon \sim  \mathrm{N}(0, \sigma^2 \Id_{\tilde{d}})$ be two independent random quantities. Then we have $\int \left( \int_{\G} \frac{1}{\sigma^{\tilde{d}}} \, \phi\left( \frac{y - P g \theta_*}{\sigma} \right) \, \mathrm{d}\Lambda_*(g) \right) \|y\|^2 \, \mathrm{d}y = \mathbb{E} \left\| P \mathrm{g} \theta_* + \varepsilon \right\|^2 \leq 2 \mathbb{E} \left( \|P \mathrm{g} \theta_*\|^2 + \|\varepsilon\|^2 \right) \leq 2 \left( \sup_{g \in \G} \|P g \theta_*\|^2 + \sigma^2 \right)$. The last inequality uses the bound $\|P g \theta_*\| \leq C_P\cdot\|\theta_*\|$ for all $g \in \G$.

Second, consider any $\theta\in\R^d$ such that $m(s; \theta_*, \theta) \geq \epsilon$ for some fixed $\epsilon>0$. Then we have 
\begin{align*}
\Lambda_* \big(\big\{g\in \G: d (Pg\theta_*, M_\theta(\G))> s \big\}\big)\geq C_1\epsilon.
\end{align*}
We give a lower bound for $\ell(\theta;\sigma)$ by decomposing $\R^{\td}$ into the set $A=\{y\in\R^{\td}: d(y,M_\theta(\G))\leq s/2\}$ and its complement. For any $y\notin A$, we have
\begin{align}\label{eq:lower
_bound_loglikelihood_0}
    \log \left(\int_{\G} \frac{1}{\sigma^{\td}}  \phi\left( \frac{y-Pg\theta}{\sigma}\right) \der \Lambda_0(g)\right) &\leq \log \left(\int_\G \frac{1}{\sigma^{\td}} \frac{1}{(2\pi)^{\td /2}} \exp\left(-\frac{s^2}{8\sigma^2}\right) \der \Lambda_0(g) \right) \notag \\
    &\leq  \log\left(\frac{1}{\sigma^{\td}} \frac{1}{(2\pi)^{\td /2}} \exp\left(-\frac{s^2}{8\sigma^2}\right)\right)\notag \\
    &= -\frac{\td}{2}\log(2\pi) - \td \log \sigma -\frac{s^2}{8\sigma^2},\\
    &\leq  -\frac{\td}{2}\log(2\pi)  -\frac{s^2}{16\sigma^2}\notag\\
    &<0,\notag
\end{align}
where the second to last inequality holds when $\sigma \leq {s^2}/({16\tilde d})$, as $\frac{s^2}{16\sigma^2}\geq \frac{\td}{\sigma} \geq \td \log \frac{1}{\sigma}$. Recall that $\mathrm{g}\sim \Lambda_*$ and $\varepsilon\sim \mathrm{N}(0,\sigma^2 \Id_{\td})$ are two independent random quantities. We have
\begin{align*}
\int_{A^c} \left(\int_{\G} \frac{1}{\sigma^{\td}}  \phi\left( \frac{y-Pg\theta_*}{\sigma}\right) \der \Lambda_*(g)\right)  \der y &= \P \left(P\mathrm{g}\theta_*+\varepsilon\notin A \right)\\
&\geq \P\left(d(P\mathrm{g}\theta_*, M_{\theta}(\G))>s\right) \P\left(\|\varepsilon\|\leq s/2\right)\\
&\geq C_1\epsilon\cdot \left(1-\exp\left(-\frac{s^2}{10\sigma^2}\right)\right),
\end{align*}
where the last inequality holds when $\sigma^2\leq {s^2}/({2\td})$ by Lemma \ref{lem:chi_square_tail}. Hence, when $\sigma^2\leq \min\{( {s^2}/({16\tilde d}))^2,{s^2}/({2\td}), s^2/10\}$, we have
\begin{align*}
&-\int_{A^c} \left(\int_{\G} \frac{1}{\sigma^{\td}}  \phi\left( \frac{y-Pg\theta_*}{\sigma}\right) \der \Lambda_*(g)\right)  \log \left(\int_{\G} \frac{1}{\sigma^{\td}}  \phi\left( \frac{y-Pg\theta}{\sigma}\right) \der \Lambda_0(g)\right) \der y\\
&\geq  -\int_{A^c} \left(\int_{\G} \frac{1}{\sigma^{\td}}  \phi\left( \frac{y-Pg\theta_*}{\sigma}\right) \der \Lambda_*(g)\right) \left(-\frac{\td}{2}\log(2\pi) -\frac{s^2}{16\sigma^2}\right) \der y\\
&  = \left(\frac{\td}{2}\log(2\pi) +\frac{s^2}{16\sigma^2}\right)   \int_{A^c} \left(\int_{\G} \frac{1}{\sigma^{\td}}  \phi\left( \frac{y-Pg\theta_*}{\sigma}\right) \der \Lambda_*(g)\right) \der y\\
&\geq C_1\epsilon\cdot \left(1-\exp\left(-\frac{s^2}{10\sigma^2}\right)\right)\cdot \left(\frac{\td}{2}\log(2\pi) +\frac{s^2}{16\sigma^2}\right) \\
&\geq \frac{C_1\epsilon}{2}\cdot \left(\frac{\td}{2}\log(2\pi) +\frac{s^2}{16\sigma^2}\right).
\end{align*}
In addition, since $\sup_{y}\phi(y)\leq \frac{1}{(2\pi)^{\td/2}}$, we have
\begin{align*}
&-\int_{A} \left(\int_{\G} \frac{1}{\sigma^{\td}}  \phi\left( \frac{y-Pg\theta_*}{\sigma}\right) \der \Lambda_*(g)\right)  \log \left(\int_{\G} \frac{1}{\sigma^{\td}}  \phi\left( \frac{y-Pg\theta}{\sigma}\right) \der \Lambda_0(g)\right) \der y\\    
&\geq -\int_{A} \left(\int_{\G} \frac{1}{\sigma^{\td}}  \phi\left( \frac{y-Pg\theta_*}{\sigma}\right) \der \Lambda_*(g)\right)   \log \left( \frac{1}{(2\pi)^{\td/2}}\frac{1}{\sigma^{\td}} \right) \der y\\
& =  -  \frac{\td}{2}  \log \left(\frac{1}{2\pi \sigma^2}\right)\int_{A} \left(\int_{\G} \frac{1}{\sigma^{\td}}  \phi\left( \frac{y-Pg\theta_*}{\sigma}\right) \der \Lambda_*(g)\right)   \der y\\
&\geq  -  \frac{\td}{2}  \log \left(\frac{1}{2\pi \sigma^2}\right),
\end{align*}
where the last inequality holds when $\sigma^2\leq 1/(2\pi)$. Combining all, when 
\begin{align*}
\sigma^2\leq \min\{( {s^2}/({16\tilde d}))^2,{s^2}/({2\td}),s^2/10,1/(2\pi)\},
\end{align*}
we have
\begin{align}\label{eq:lower
_bound_loglikelihood}
&\ell(\theta;\sigma^2)\notag\\ 
&= -\int_{A^c} \left(\int_{\G} \frac{1}{\sigma^{\td}}  \phi\left( \frac{y-Pg\theta_*}{\sigma}\right) \der \Lambda_*(g)\right)  \log \left(\int_{\G} \frac{1}{\sigma^{\td}}  \phi\left( \frac{y-Pg\theta}{\sigma}\right) \der \Lambda_0(g)\right) \der y \notag\\
&\quad -\int_{A} \left(\int_{\G} \frac{1}{\sigma^{\td}}  \phi\left( \frac{y-Pg\theta_*}{\sigma}\right) \der \Lambda_*(g)\right)  \log \left(\int_{\G} \frac{1}{\sigma^{\td}}  \phi\left( \frac{y-Pg\theta}{\sigma}\right) \der \Lambda_0(g)\right) \der y \notag\\
&\geq \frac{C_1\epsilon}{2} \cdot\left(\frac{\td}{2}\log(2\pi) +\frac{s^2}{16\sigma^2}\right) - \frac{\td}{2}  \log \left(\frac{1}{2\pi \sigma^2}\right).
\end{align}
Hence, by settting $\epsilon=\sigma^{2-\alpha}$, there exists some threshold $\sigma_0^2$ that only depends on $s,\td,\alpha$ such that for any $\sigma^2$ less than it, the above display is greater than the upper bound \eqref{eq:upper
_bound_loglikelihood} derived above for $\ell(\theta_*;\sigma^2)$. Consequently, under such conditions, the MLE $\hat{\theta}(\Lambda_0, \sigma^2)$, being the minimizer of $\ell(\theta;\sigma^2)$ by definition, must satisfy $m\big(s; \theta_*, \hat{\theta}(\Lambda_0, \sigma^2)\big) < \sigma^{2-\alpha}$. The proof is complete.
\end{proof}

\begin{proof}[Proof of Theorem \ref{thm:consistency_Hausdorff}]
Consider any $\theta\in\R^d$ such that $\sup_{g\in\G} d(Pg\theta_*, M_\theta(\G)) > \td^{1/2} \sigma^{1-\alpha}$ with $ \td^{1/2} \sigma^{1-\alpha}<1$. By compactness of $M_\theta(\G)$, there exists $g_0\in\G$ such that $d(Pg_0\theta_*, M_\theta(\G)) > \td^{1/2} \sigma^{1-\alpha}$. We then have
\begin{align*}
m(\td^{1/2} \sigma^{1-\alpha}/2;\theta_*,\theta)\geq& \Lambda_0\Big(\big\{g\in\G: d(Pg\theta_*, Pg_0\theta_*)\leq \td^{1/2} \sigma^{1-\alpha}/2\big\}\Big)\\
\geq & \Lambda_0\Big(\big\{g\in\G: \|g-g_0\|\cdot C_P\|\theta_*\|\leq \td^{1/2} \sigma^{1-\alpha}/2\big\}\Big)\\
\geq & c_0 \Big(\frac{\td^{1/2}}{2C_P\|\theta_*\|}\cdot \sigma^{1-\alpha}\Big)^\gamma.
\end{align*}
Here, the first inequality follows from the triangle inequality and the last inequality follows from the assumption \eqref{eq:measure_G}. 

As in the proof of Theorem~\ref{thm:consistency_in_measure}, setting $s^2 = \tilde{d} \sigma^{2 - 2\alpha}$ with $\alpha\in(0,1)$, the bound in \eqref{eq:lower
_bound_loglikelihood_0} becomes
\begin{align*}
-\frac{\tilde{d}}{2}\log(2\pi) - \tilde{d} \log \sigma -\frac{s^2}{8\sigma^2}
&= -\frac{\tilde{d}}{2}\log(2\pi) - \tilde{d} \log \sigma - \frac{\tilde{d}}{8} \sigma^{-2\alpha} \\
&\leq -\frac{\tilde{d}}{2}\log(2\pi) - \frac{\tilde{d}}{16} \sigma^{-2\alpha} \\
&< 0,
\end{align*}
provided that $16\log (1/\sigma)\leq \sigma^{-2\alpha}$. Furthermore, if $\sigma$ also satisfies
\begin{align*}
\sigma^2\leq \min\{\sigma^{2-2\alpha}/2, \td\sigma^{2-2\alpha}/10,1/(2\pi)\},
\end{align*}
then the bound in \eqref{eq:lower
_bound_loglikelihood} is modified to
\begin{align*}
\ell(\theta; \sigma^2)
\geq \left( \frac{\tilde{d}}{2}\log(2\pi) + \frac{\tilde{d}}{16} \sigma^{-2\alpha} \right) \cdot \frac{C_1 c_0}{2} \left( \frac{\tilde{d}^{1/2}}{2C_P \|\theta_*\|} \cdot \sigma^{1 - \alpha} \right)^\gamma
- \frac{\tilde{d}}{2} \log \left( \frac{1}{2\pi \sigma^2} \right).
\end{align*}
Comparing with \eqref{eq:upper
_bound_loglikelihood}, we observe that as long as $ \gamma(1 - \alpha) - 2\alpha < 0$, i.e. $\alpha>\gamma/(\gamma+2)$, there exists a threshold $\sigma_0 > 0$, depending only on $\alpha, \gamma, \tilde{d}$, such that for all $\sigma < \sigma_0 $, we have $\ell(\theta;\sigma^2)>\ell(\theta_*,\sigma^2)$, which implies the desired claim.

\end{proof}

\subsection{Proofs of Results in Section \ref{sec:bias_sigma_0}}
\begin{proof}[Proof of Proposition \ref{prop:non-self_intersection}]
It suffices to show that $M_\theta$ is injective for generic $\theta$. Given this, since $M_{\theta}$ is continuous, $\G$ is compact, and $\R^{\td}$ is Hausdorff, we can apply \cite[Thm 26.6]{munkrestopology} to conclude $T_\theta$ is a homeomorphism for generic $\theta$. 

To this end, it suffices to prove that the set of triples $(g_1, g_2,\theta)\in\G\times \G\times \R^d$ such that $g_1\not= g_2$ but $Pg_1\theta= Pg_2\theta$ has dimension strictly less than $d$. Indeed, the set of $\theta$ for which $M_\theta$ fails to be injective is the projection of this set of triples to $\R^d$, and projection does not raise the dimension of a real semi-algebraic set.

Let us now fix $g_1,g_2$ with $g_1\not=g_2$ and compute the dimension of $\{\theta\in\R^d~|~Pg_1\theta=Pg_2\theta\}$. Notice that
\begin{align*}
Pg_1\theta= Pg_2\theta
\end{align*}
if and only if
\begin{align*}
\theta\in \mathrm{ker} (P(g_1-g_2)).
\end{align*}
So the dimension of $\{\theta\in\R^d~|~Pg_1\theta=Pg_2\theta\}$ is 
\begin{align*}
\dim \mathrm{ker} (P(g_1-g_2))=d-\rank (P(g_1-g_2))= d-\rank (P(\Id-g_2g_1^{-1})).
\end{align*}
Since $P$ has rank $\td\leq d$, we further have
\begin{align}
\dim \mathrm{ker} (P(g_1-g_2)) \leq& d-\td+\dim \mathrm{ker} (\Id-g_2g_1^{-1})\notag\\
=& d-\td +\dim E_1(g_2g_1^{-1}).\label{eq:dim_upper}
\end{align}

Now recall that for any polynomial map $f:X\to Y$ between real semi-algebraic sets $X$ and $Y$, we have
\begin{align*}
    \dim(X)=\max_{m\geq 0}\big\{m+\dim\{y\in Y ~|~ \dim f^{-1} (y)\geq m\} \big\}.
\end{align*}
Applying this to the map from $\{(g_1,g_2,\theta)\in\G\times\G\times\R^d ~|~g_1\not=g_2,Pg_1\theta=Pg_2\theta\}$ to $\{(g_1,g_2)\in\G~|~g_1\not=g_2\}$, we see that
\begin{align*}
&\dim \{(g_1,g_2,\theta)\in\G\times\G\times\R^d ~|~g_1\not=g_2,Pg_1\theta=Pg_2\theta\} \\
=&\max_{m\geq 0}\big\{m+\dim\{(g_1,g_2)\in\G\times \G ~\text{with}~g_1\not=g_2 ~|~ \dim \{\theta\in\R^d~|~Pg_1\theta=Pg_2\theta\}\geq m\} \big\}\\
\leq& \max_{m\geq 0}\big\{m+\dim\{(g_1,g_2)\in\G\times \G ~\text{with}~g_1\not=g_2 ~|~ \dim E_1(g_2g_1^{-1})\geq m-d+\td\} \big\},
\end{align*}
where the last inequality follows from \eqref{eq:dim_upper}. Note that $g_1\not=g_2$ if and only if $g_2g_1^{-1}\not=\Id$ and each $g$ arises as $g_2g_1^{-1}$ for a $\dim \G$-dimensional set of pairs $(g_1,g_2)$. We have the dimension of the set of pairs $(g_1,g_2)$ with $g_1\not=g_2$ where the dimension of the set $E_1(g_2g_1^{-1})$ is larger than $m$ is equal to $\dim \G+ \dim\{g\in\G\setminus \Id ~|~ \dim E_1(g)\geq m\}$. It follows that
\begin{align*}
&\dim \{(g_1,g_2,\theta)\in\G\times\G\times\R^d ~|~g_1\not=g_2,Pg_1\theta=Pg_2\theta\}\\
\leq&\max_{m\geq 0}\big\{m+\dim\G+ \dim\{g\in\G\setminus \Id ~|~ \dim E_1(g)\geq m-d+\td\} \big\}\\
=&\max_{k\geq 0} \big\{k+d-\td+\dim\G+ \dim\{g\in\G\setminus \Id ~|~ \dim E_1(g)\geq k\} \big\}\\
=&d+\max_{k\geq 0} \big\{k+ \dim\{g\in\G\setminus \Id ~|~ \dim E_1(g)\geq k\} \big\}+\dim\G-\td.
\end{align*}
By assumption on $\td$, this is strictly less than $d$, as desired.
\end{proof}

\begin{proof}[Proof of Corollary \ref{cor:no_self_intersection_examples}]
In the case of projected continuous MRA, it is straightforward to verify that $P$ is surjective. Moreover, we have $\dim\G=1$, and the group element $g$ is given by \eqref{eq:MRA_rotation}. Thus, for any non-identity element $g\in\G$, $\dim E_1(g)=1$, which implies 
\begin{align*}
\max_{k \geq 0} \big\{ k + \dim \{ g \in \G \setminus \Id \mid \dim E_1(g) \geq k \} \big\} = 2.
\end{align*}
To apply Proposition \ref{prop:non-self_intersection}, the condition reduces to \( \td = L+1 > 3 \), that is, \( L \geq 3 \).

For cryo-EM, recall that the tomographic projection corresponds to the linear map \(P\) defined in \eqref{eq:tomographic_proj_representation}. To prove that \(P\) is surjective, it suffices to check that the corresponding map \(P^{\C}\) between complex coefficients, given by \eqref{eq:tomographic_proj_representation_complex}, is surjective. By \eqref{eq:coef_projection_relation}, it is enough to show that for any \((s,m) \in \tilde{\mathcal{I}}\), there exists some non-zero \(c_{lm}\) with \(l = |m|, \ldots, L\). Noting that \( P_{mm}(0) = (2m-1)!! \neq 0 \) for all \( m \geq 0 \), and that \( c_{mm} \) is a non-zero constant multiple of \( P_{mm}(0) \), we conclude that \( c_{mm} \) is non-zero, and similarly for \( c_{m,-m} \). This guarantees the surjectivity of the map \(P\).

Moreover, note that $\dim\G=3$ and the group element $g$ is given by \eqref{eq:cryoEM_rotation}. Thus, for any non-identity element $g\in\G$, $\dim E_1(g)=S_0$, which implies
\begin{align*}
\max_{k \geq 0} \big\{ k + \dim \{ g \in \G \setminus \Id \mid \dim E_1(g) \geq k \} \big\} = 2S_0.
\end{align*}
To apply Proposition \ref{prop:non-self_intersection}, the condition becomes $\td=S(2L+1)>2S=2S_0$, which holds if $L\geq 1$. 

\end{proof}

\begin{proof}[Proof of Corollary \ref{cor:orbit_subset}]
By Proposition~\ref{prop:non-self_intersection}, both $M_\theta(\G)$ and $M_{\theta'}(\G)$ are smooth manifolds of the same dimension. Since $\G$ is connected, both orbits are connected. The inclusion $M_\theta(\G) \subseteq M_{\theta'}(\G)$ then implies equality.
\end{proof}

\begin{proof}[Proof of Theorem \ref{thm:orbit_recovery}]
To prove \eqref{eq:orbit_zero_noise}, it suffices by Corollary~\ref{cor:orbit_subset} to show that $M_{\theta_*}(\G) \subseteq M_{\bar\theta}(\G)$. By continuity of the map $M_\theta$, it is enough to verify that for any $s > 0$,
\begin{align}\label{eq:measure_zero_noise}
\Lambda_0\left( \left\{ g \in \G : d(Pg\theta_*, M_{\bar\theta}(\G)) > s \right\} \right) = 0,
\end{align}
where $\Lambda_0$ denotes the Haar measure on $\G$. To establish this, note that for any $g \in \G$ and $\theta', \theta'' \in \mathbb{R}^d$, we have
\begin{align*}
\|Pg\theta' - Pg\theta''\| \leq C_P \cdot \|\theta' - \theta''\|.
\end{align*}
In particular, it follows that for any $s > 0$, there exists $\delta > 0$ such that for all $\sigma^2 \leq \delta$,
\begin{align}\label{eq:Hausdorff_consistency_zero_noise}
d_H(M_{\hat\theta(\Lambda_0,\sigma^2)}(\G), M_{\bar\theta}(\G)) \leq s/2.
\end{align}
Therefore, for all such $\sigma^2$,
\begin{align*}
\big\{ g \in \G : d(Pg\theta_*, M_{\hat\theta(\Lambda_0,\sigma^2)}(\G)) \leq s/2 \big\}
\subseteq
\big\{ g \in \G : d(Pg\theta_*, M_{\bar\theta}(\G)) \leq s \big\},
\end{align*}
which further gives
\begin{align*}
\Lambda_0\Big( \big\{ g \in \G : d(Pg\theta_*, M_{\bar\theta}(\G)) > s \big\} \Big)
\leq
\Lambda_0\Big( \big\{ g \in \G : d(Pg\theta_*, M_{\hat\theta(\Lambda_0,\sigma^2)}(\G)) > s/2 \big\} \Big).
\end{align*}
Since Theorem \ref{thm:consistency_in_measure} guarantees the right-hand side vanishes as $\sigma^2\to 0$ for any fixed $s$, the right-hand side must be zero, thereby proving \eqref{eq:measure_zero_noise}. The Hausdorff consistency result \eqref{eq:Hausdorff_consistency} then follows from \eqref{eq:Hausdorff_consistency_zero_noise} by substituting $M_{\bar\theta}(\G)$ with $M_{\theta_*}$.

\end{proof}

\subsection{Proofs of Results in Section \ref{sec:joint_MLE}}

\begin{proof}[Proof of Lemma \ref{lem:inplane_uniform_density}]
By \eqref{eq:realWigner}, for any $p\geq 1$, $-p\leq u,v<0$, there exists some matrix $\mathcal{Q}_{puv}\in\C^{4\times 4}$ such that
\begin{align*}
(\mathcal{D}_{uv}^{(p)}(\frakg), \mathcal{D}_{u,-v}^{(p)}(\frakg), \mathcal{D}_{-u,v}^{(p)}(\frakg), \mathcal{D}_{-u,-v}^{(p)}(\frakg))^\T = \mathcal{Q}_{puv}\cdot(D_{uv}^{(p)}(\frakg), D_{u,-v}^{(p)}(\frakg), D_{-u,v}^{(p)}(\frakg), D_{-u,-v}^{(p)}(\frakg))^\T,
\end{align*}
for any $\frakg\in\mathsf{SO}(3)$. Here $D_{uv}^{(p)}$ is the $uv$-th entry of the complex Wigner D-matrix at frequency $p$.
The explicit form of $\mathcal{Q}_{puv}$ can be given by
\begin{align*}
\mathcal{Q}_{puv}=\frac{1}{2}\cdot\left(
    \begin{array}{cccc}
        1 & -(-1)^u & -(-1)^v & (-1)^{u+v}\\
        -\i & -(-1)^u \i & (-1)^v \i & (-1)^{u+v} \i\\
        \i & -(-1)^u \i & (-1)^v \i & -(-1)^{u+v} \i\\
        1 & (-1)^u  & (-1)^v & (-1)^{u+v}
    \end{array}\right),
\end{align*}
with its inverse
\begin{align*}
(\mathcal{Q}_{puv})^{-1}=\frac{1}{2}\cdot\left(
    \begin{array}{cccc}
        1 & \i & -\i & 1\\
        -(-1)^u & (-1)^u \i & (-1)^u \i & (-1)^u \\
        -(-1)^v & -(-1)^v \i & -(-1)^v \i & (-1)^v \\
        (-1)^{u+v} & -(-1)^{u+v} \i & (-1)^{u+v} \i & (-1)^{u+v}
    \end{array}\right).
\end{align*}
Note that $\mathcal{Q}_{puv}$ is unitary. Then for any $\frakg\in\mathsf{SO}(3)$ and rotation $z(\alpha)$ around the z-axis,
\begin{align*}
&(\mathcal{D}_{uv}^{(p)}, \mathcal{D}_{u,-v}^{(p)}, \mathcal{D}_{-u,v}^{(p)}, \mathcal{D}_{-u,-v}^{(p)})^\T(\frakg z(\alpha))\\
&=\mathcal{Q}_{puv}\cdot(D_{uv}^{(p)}, D_{u,-v}^{(p)}, D_{-u,v}^{(p)}, D_{-u,-v}^{(p)})^\T(\frakg z(\alpha))\\
&=\mathcal{Q}_{puv}\cdot\diag(e^{-i v\alpha}, e^{i v\alpha}, e^{-i v\alpha}, e^{i v\alpha})\cdot(D_{uv}^{(p)}, D_{u,-v}^{(p)}, D_{-u,v}^{(p)}, D_{-u,-v}^{(p)})^\T(\frakg)\\
&=\mathcal{Q}_{puv}\cdot\diag(e^{-i v\alpha}, e^{i v\alpha}, e^{-i v\alpha}, e^{i v\alpha})\cdot (\mathcal{Q}_{puv})^{-1}\cdot (\mathcal{D}_{uv}^{(p)}, \mathcal{D}_{u,-v}^{(p)}, \mathcal{D}_{-u,v}^{(p)}, \mathcal{D}_{-u,-v}^{(p)})^\T(\frakg).
\end{align*}
Hence, the in-plane uniformity of $\rho(\frakg)$ gives
\begin{align}\label{eq:constraint_inplane_B}
&(\mathcal{B}_{puv}, \mathcal{B}_{pu,-v}, \mathcal{B}_{p,-uv}, \mathcal{B}_{p,-u,-v})\\
&=(\mathcal{B}_{puv}, \mathcal{B}_{pu,-v}, \mathcal{B}_{p,-uv}, \mathcal{B}_{p,-u,-v})\cdot\mathcal{Q}_{puv}\cdot\diag(e^{-i v\alpha}, e^{i v\alpha}, e^{-i v\alpha}, e^{i v\alpha})\cdot (\mathcal{Q}_{puv})^{-1} \notag
\end{align}
for any $\alpha\in\R$. By direct calculations, 
\begin{align}\label{eq:inplane_calc}
\mathcal{Q}_{puv}\!\cdot\!\diag(e^{-i v\alpha}, e^{i v\alpha}, e^{-i v\alpha}, e^{i v\alpha})\!\cdot\! (\mathcal{Q}_{puv})^{-1}=\left(
    \begin{array}{cccc}
        \cos(v\alpha) & \sin(v\alpha) & 0 & 0\\
        -\sin(v\alpha) & \cos(v\alpha) & 0 & 0 \\
        0 & 0 & \cos(v\alpha) & \sin(v\alpha) \\
        0 & 0 & -\sin(v\alpha) & \cos(v\alpha)
    \end{array}\right).
\end{align}
We can perform similar calculations for the case when $u=0$ and $-p\leq v<0$. In conclusion, combining \eqref{eq:constraint_inplane_B} and \eqref{eq:inplane_calc} implies $\mathcal{B}_{puv}=0$ for any $v\not=0$ and $-p\leq u,v\leq p$, which further gives the expansion \eqref{eq:rot_dist_inplane_unif_expansion}.
\end{proof}

\begin{proof}[Proof of Theorem \ref{thm:consistency_1}]
We use Wald's consistency proof (see \cite[Theorem 5.14]{van2000asymptotic}). First, note that $B$ is compact. This is because for any $\frakg \in \mathsf{SO}(3)$, the constraint $\sum_{p=0}^{\bar{P}} \sum_{u=-p}^p \mathcal{B}_{pu} \mathcal{D}_{u0}^{(p)} (\frakg)\geq 0 $ is linear. Hence, the set $\{(\mathcal{B}_{pu}): \sum_{p=0}^{\bar{P}} \sum_{u=-p}^p \mathcal{B}_{pu} \mathcal{D}_{u0}^{(p)} (\frakg)\geq 0 \}$ is compact. Then
\begin{align*}
    B = \left\{(\mathcal{B}_{pu}): \mathcal{B}_{00}=1, \mathcal{B}_{pu}\in[-b,b]\text{ for all }p,u\right\}\cap \left(\cap_{\frakg\in\mathsf{SO}(3)} \left\{(\mathcal{B}_{pu}): \sum_{p=0}^{\bar{P}} \sum_{u=-p}^p \mathcal{B}_{pu} \mathcal{D}_{u0}^{(p)} (\frakg)\geq 0 \right\}\right),
\end{align*}
which is an  intersection of infinite closed sets and consequently is also compact. As a result, the parameter space $\Theta \times B$ is compact, where $\Theta :=\{x\in\R^d:\|x\|\leq r\}$.

Denote $m_{\theta,\mathcal{B}}(y) = \log p_{\theta,\mathcal{B}}(y)$. Note that for any $y$, the map $(\theta,\mathcal{B})\mapsto m_{\theta,\mathcal{B}}(y)$ is continuous. Hence, it is also upper-semicontinuous. Now we need to show $\E_{\theta_*,\mathcal{B}_*} \sup_{(\theta,\mathcal{B})\in\Theta\times B}m_{\theta,\mathcal{B}}(Y)<\infty$. Note that for any $y\in\R^{\td}$ and any $(\theta,\mathcal{B})\in\Theta\times B$, we have
\begin{align*}
m_{\theta,\mathcal{B}}(y) &= \log \left( \int_{\mathsf{SO}(3)}\frac{1}{(2\pi\sigma^2)^{\td/2}}
\exp\left(-\frac{\|y-P \check{\mathcal{D}}(\frakg)\theta\|^2}{2\sigma^2}\right)\sum_{p=0}^{ \bar{P} } \sum_{u=-p}^p \mathcal{B}_{pu} \mathcal{D}_{u0}^{(p)}(\frakg)\mathrm{d}\frakg\right)\\
&\leq \log \left( \int_{\mathsf{SO}(3)}\frac{1}{(2\pi\sigma^2)^{\td/2}}
\sum_{p=0}^{ \bar{P} } \sum_{u=-p}^p \mathcal{B}_{pu} \mathcal{D}_{u0}^{(p)}(\frakg)\mathrm{d}\frakg\right)\\
&\leq \log\left(\frac{1}{(2\pi\sigma^2)^{\td/2}}\right).
\end{align*}
Hence,
\begin{align*}
\E_{\theta_*,\mathcal{B}_*} \sup_{(\theta,\mathcal{B})\in\Theta\times B}m_{\theta,\mathcal{B}}(Y) \leq \E_{\theta_*,\mathcal{B}_*} \sup_{(\theta,\mathcal{B})\in\Theta\times B} \log\left(\frac{1}{(2\pi\sigma^2)^{\td/2}}\right)  = \log\left(\frac{1}{(2\pi\sigma^2)^{\td/2}}\right) <\infty.
\end{align*}
Since the model is assumed to be identifiable up to the joint orbit,  minimizers of the population negative log-likelihood $ \E_{\theta_*,\Lambda_*} - \log p_{\theta,\mathcal{B}}(y)$ form the set  $\{(\theta,\mathcal{B}):\|\theta\|\leq r, \mathcal{B}\in B, \orbit_{(\theta,\Lambda)}=\orbit_{(\theta_*,\Lambda_*)}$  where the density of $\Lambda$ is $\sum_{p=0}^{\bar{P}} \sum_{u=-p}^p \mathcal{B}_{pu} \mathcal{D}_{u0}^{(p)} (\frakg),\forall \frakg \in \mathsf{SO}(3)\}$. Then by Theorem 5.14 of \cite{van2000asymptotic}, in probability,
we have $(\hat \theta_n,\hat{\mathcal{B}}_n)$ converges to some point in this set.
\end{proof}

\section{Calculus of Spherical Harmonics}\label{sec:calculus_SH}

We establish the notations for certain special functions associated with the action of $\mathsf{SO}(3)$ and provide key identities among them, which will be used in the proofs and may also hold independent interest for other research endeavors. 

\subsection{Complex Spherical Harmonics and (Complex) Wigner D-Matrices}
We follow the conventions established in \cite{rose1995elementary}, noting that some variations exist in the literature.

\subsubsection{Complex Spherical Harmonics}\label{sec:complex_SH}
Let $P_{lm}(x)$ denote the associated Legendre polynomials (without
Cordon-Shortley phase)
\begin{equation}\label{eq:legendre}
P_{lm}(x)=\frac{1}{2^l l!}(1-x^2)^{m/2} \frac{\der^{l+m}}{\der x^{l+m}}(x^2-1)^l 
\quad \text{ for } m=-l,-l+1,\ldots,l-1,l.
\end{equation}
Let $\mathcal{S}^2 \subset \R^3$ be the unit sphere, parametrized by the 
latitude $\varphi_1 \in [0,\pi]$ and longitude $\varphi_2 \in [0,2\pi)$.
The \emph{complex spherical harmonics} basis on $\mathcal{S}^2$ is given by (see \cite[Eq.\
(III.20)]{rose1995elementary})
\begin{equation}\label{eq:complexharmonics}
y_{lm}(\varphi_1,\varphi_2)
=(-1)^m\sqrt{\frac{2l+1}{4\pi} \cdot \frac{(l-m)!}{(l+m)!}} \cdot
P_{lm}(\cos \varphi_1)e^{\i m\varphi_2}
\text{ for } l \geq 0 \text{ and } m=-l,\ldots,l.
\end{equation}
The index $l$ is the frequency, and there are $2l+1$ basis functions for each frequency $l$. These functions are orthonormal in $L_2(\mathcal{S}^2,\C)$ with respect
to the surface area measure $\sin \varphi_1\,\der \varphi_1\,\der \varphi_2$,
and satisfy the conjugation symmetry (see \cite[Eq.\
(III.23)]{rose1995elementary})
\begin{equation}\label{eq:conjugationsymmetry}
\overline{y_{lm}(\varphi_1,\varphi_2)}=(-1)^m y_{l,-m}(\varphi_1,\varphi_2).
\end{equation}
Notably, the complex spherical harmonics form a complete orthonormal basis for $L_2(\mathcal{S}^2,\C)$. The convention of complex spherical harmonics here has a $(-1)^m$ factor difference against those in \cite[Eq.\ 5.2(1)]{varshalovich1988quantum} and \cite[Eq.\ 4.36]{chirikjian2016harmonic}.

\subsubsection{Complex Wigner D-Matrices}\label{sec:complex_Wigner}
Let $f \in L_2(\mathcal{S}^2,\C)$.
Then $f$ may be decomposed in the complex spherical harmonics basis
(\ref{eq:complexharmonics}) as
\[f=\sum_{l=0}^\infty \sum_{m=-l}^l u_m^{(l)}y_{lm}.\]
Writing $u^{(l)}=(u_m^{(l)}:-l \leq m \leq l)\in\C^{2l+1}$,
the rotation $f \mapsto
f_\frakg$ given by $f_\frakg(\varphi_1,\varphi_2)=f(\frakg^{-1} \cdot (\varphi_1,\varphi_2))$
for $\frakg\in \mathsf{SO}(3)$ is described by the map of spherical harmonic coefficients
(see \cite[Eq.\ (4.28a)]{rose1995elementary})
\[u^{(l)}\mapsto D^{(l)}(\frakg) u^{(l)} \text{ for each } l=0,1,2,\ldots,\]
where $D^{(l)}(\frakg)\in\C^{(2l+1)\times (2l+1)}$ is the \emph{complex Wigner
D-matrix} at frequency $l$ corresponding to $\frakg$. Equivalently, writing $y_l=(y_{lm}:-l\leq m\leq l)\in\C^{2l+1}$, 
\begin{align}\label{eq:complexWigner}
y_l(\frakg^{-1} \cdot (\varphi_1,\varphi_2))= (D^{(l)}(\frakg))^\T y_l(\varphi_1,\varphi_2) \text{ for each } l=0,1,2,\ldots.
\end{align}
Notably, the complex Wigner D-matrices are unitary. They may be further expressed explicitly using Euler angles. For any $\frakg\in\mathsf{SO}(3)$ parameterized by Euler angles $(\alpha,\beta,\gamma)$ with $\alpha,\gamma\in[0,2\pi)$ and $\beta\in[0,\pi]$ (characterized by the z-y-z convention and the right-handed frame), the $mn$-th entry of the complex Wigner D-matrix at frequency $l$ has the form (see \cite[Eq.\ (4.12) \& (4.13)]{rose1995elementary})
\[
D^{(l)}_{mn}(\frakg(\alpha,\beta,\gamma))=e^{-im\alpha} d^{(l)}_{mn}(\cos\beta) e^{-in\gamma} \text{ for each }-l\leq m,n\leq l,
\]
where
\begin{align*}
d^{(l)}_{mn}(\cos\beta)=&[(l+m)!(l-m)!(l+n)!(l-n)!]^{1/2}\\
&\cdot\sum_k (-1)^k \frac{(\cos(\beta/2)
)^{2l-2k-m+n} (\sin(\beta/2))^{2k+m-n}}{k!(l-m-k)!(l+n-k)!(m-n+k)!},
\end{align*}
where $d^{(l)}(\cos\beta)\in\R^{(2l+1)\times(2l+1)}$ is the Wigner small d-matrix at frequency $l$ and the sum with respect to $k$ runs over all integer values for which the factorial arguments are non-negative. The convention here has a $(-1)^{m-n}$ factor difference against that in \cite[Eq.\ 4.3(5)]{varshalovich1988quantum}. 

The complex Wigner D-matrices forms an irreducible representation of the group $\mathsf{SO}(3)$. Moreover, by Peter-Weyl Theorem, the set of all the entries form a complete orthogonal basis for $L_2(\mathsf{SO}(3),\C)$.

\subsubsection{Properties}\label{sec:complex_properties}
Let the following expectations be taken with respect to the Haar probability measure on $\mathsf{SO}(3)$ (see \cite[Section 16]{rose1995elementary} and \cite[Appendix B.2]{bandeira2023estimation}). We have the following properties for complex Wigner D-matrices.
\begin{enumerate}
\item[(1)] Group homomorphism: for any $l\geq 0$ and $\frakg_1,\frakg_2\in \mathsf{SO}(3)$,
\[D^{(l)}(\frakg_1 \frakg_2)=D^{(l)}(\frakg_1)D^{(l)}( \frakg_2).\]
It follows immediately that $D^{(l)}(\frakg)$ is unitary for any $l\geq 0$ and $\frakg\in\mathsf{SO}(3)$.

\item[(2)] Mean identity:
\begin{equation}\label{eq:ED}
D^{(0)}(\frakg)=1, \qquad \E_{\frakg}[D^{(l)}(\frakg)]=0 \text{ for all } l \geq 1.
\end{equation}

\item[(3)] Orthogonality: for any $l,l'\geq 0$ and $-l\leq q,m\leq l$
and $-l'\leq q',m'\leq l'$,
\begin{equation}\label{eq:WignerDorthog}
\E_\frakg\left[D^{(l)}_{qm}(\frakg)D^{(l')}_{q'm'}(\frakg)\right]
=\frac{(-1)^{m+q}}{2l+1}\1\{l=l',q=-q',m=-m'\}.
\end{equation}

\item[(4)] Product of two complex Wigner D-matrices: for any $l,l'\geq 0$ and $-l\leq q,m\leq l$ and $-l'\leq q',m'\leq l'$ and $\frakg\in\mathsf{SO}(3)$,
\begin{align}\label{eq:Wignerproduct}
D^{(l)}_{qm}(\frakg) D^{(l')}_{q'm'}(\frakg)=\sum_{l''=|l-l'|}^{l+l'} C_{q,q',q+q'}^{l,l',l''} C_{m,m',m+m'}^{l,l',l''} D_{q+q',m+m'}^{(l'')}(\frakg),
\end{align}
where $C_{m,m',m''}^{l,l',l''}$ is a complex Clebsch-Gordan coefficient. Its explicit form can be found in \cite[Eq.\ (2.41)]{bohm2013quantum} and \cite[Appendix A.2]{bandeira2023estimation}. Of note, all complex Clebsch-Gordan coefficients are real.

\item[(5)] Third order identity: for any $l,l',l''\geq 0$ and
$-l\leq q,m\leq l$ and $-l'\leq q',m'\leq l'$ and $-l''\leq q'',m''\leq l''$,
\begin{align}\label{eq:WignerDtriple}
&\hspace{0.1in}\E_\frakg\left[D_{qm}^{(l)}(\frakg)D_{q'm'}^{(l')}(\frakg)D_{q''m''}^{(l'')}(\frakg)\right]\nonumber\\
&=\1\{q+q'=-q''\} \cdot \1\{m+m'=-m''\} \cdot \1\{|l-l'| \leq l'' \leq l+l'\}
\nonumber\\
&\hspace{0.3in}\cdot \frac{(-1)^{m''+q''}}
{2l''+1} C_{q,q',-q''}^{l,l',l''} C_{m,m',-m''}^{l,l',l''}.
\end{align}

\end{enumerate}

\subsection{Real Spherical Harmonics and Real Wigner D-Matrices}

\subsubsection{Real Spherical Harmonics}\label{sec:real_SH}
A \emph{real} basis of \emph{spherical harmonics} $\mathbcal{y}_{lm}:\mathcal{S}^2\to \R$ can be defined in terms of the complex analogues
\begin{align}\label{eq:realharmonics}
\mathbcal{y}_{lm}=\begin{cases}
        \frac{\i}{\sqrt{2}}\cdot (y_{lm}-(-1)^m y_{l,-m}) &  \text{ if } m < 0, \\
        y_{l0} & \text{ if } m = 0, \\
        \frac{1}{\sqrt{2}}\cdot (y_{l,-m}+(-1)^m y_{lm}) &  \text{ if } m > 0. \\
    \end{cases}  
\end{align}
Similarly, writing $\mathbcal{y}_l=(\mathbcal{y}_{lm}:-l\leq m\leq l)\in\R^{2l+1}$, we have the relationship in the matrix form $\mathbcal{y}_l= Q^l y_l$ where $Q^l\in\C^{(2l+1)\times (2l+1)}$ is the change of basis matrix from complex to real spherical harmonics basis at frequency $l$. For any $l>0$, $Q^l$ is a unitary matrix having non-zero entries only on the main diagonal and anti-diagonal, and its non-zero elements are explicitly given by 
\begin{equation}\label{eq:Q_values}
    Q^l_{mm} = \begin{cases}
        \i/\sqrt{2} &  \text{ if } m < 0, \\
        1 & \text{ if } m = 0, \\
        (-1)^m/\sqrt{2} &  \text{ if } m > 0, \\
    \end{cases}  
    \quad
        Q^l_{-m,m} = \begin{cases}
        1/\sqrt{2} &  \text{ if } m < 0, \\
        1  & \text{ if } m = 0, \\
        -(-1)^m \i/\sqrt{2} &  \text{ if } m > 0. \\
    \end{cases}  
\end{equation}
The real spherical harmonics form a complete orthonormal basis for $L_2(\mathcal{S}^2,\R)$. 


\subsubsection{Real Wigner D-Matrices}
The \emph{real Wigner D-matrix}\label{sec:real_Wigner}
$\mathcal{D}^{(l)}(\frakg)\in\R^{(2l+1)\times (2l+1)}$ at frequency $l$ corresponding to $\frakg\in\mathsf{SO}(3)$ can be defined through the rotation transformation applied to real spherical harmonics similar to \eqref{eq:complexWigner}
\begin{align*}
\mathbcal{y}_l(\frakg^{-1} \cdot (\varphi_1,\varphi_2))= (\mathcal{D}^{(l)}(\frakg))^\T \mathbcal{y}_l(\varphi_1,\varphi_2) \text{ for each } l=0,1,2,\ldots.
\end{align*}
Equivalently, the real Wigner D-matrices can be written in terms of the complex analogues
\begin{align}\label{eq:realWigner}
\mathcal{D}^{(l)}(\frakg)=\overline{Q^l} D^l(\frakg) (Q^l)^{\T} \text{ for each } l=0,1,2,\ldots.
\end{align}
Notably, the real Wigner D-matrices are orthogonal. Similarly, by Peter-Weyl Theorem, the set of all the entries of real Wigner D-matrices form a complete orthogonal basis for $L_2(\mathsf{SO}(3),\R)$.


\subsubsection{Properties}\label{sec:real_properties}
Let the following expectations be taken with respect to the Haar probability measure on $\mathsf{SO}(3)$. We have the following properties for real Wigner D-matrices.

\begin{enumerate}
\item[(1)] Group homomorphism: for any $l\geq 0$ and $\frakg_1,\frakg_2\in \mathsf{SO}(3)$,
\[\mathcal{D}^{(l)}(\frakg_1 \frakg_2)=\mathcal{D}^{(l)}(\frakg_1)\mathcal{D}^{(l)}( \frakg_2).\]
It follows immediately that $\mathcal{D}^{(l)}(\frakg)$ is orthogonal for any $l\geq 0$ and $\frakg\in\mathsf{SO}(3)$.

\item[(2)] Mean identity:
\begin{equation}\label{eq:realED}
\mathcal{D}^{(0)}(\frakg)=1, \qquad \E_{\frakg}[\mathcal{D}^{(l)}(\frakg)]=0 \text{ for all } l \geq 1.
\end{equation}

\item[(3)] Orthogonality: for any $l,l'\geq 0$ and $-l\leq q,m\leq l$
and $-l'\leq q',m'\leq l'$,
\begin{equation}\label{eq:realWignerDorthog}
\E_\frakg\left[\mathcal{D}^{(l)}_{qm}(\frakg)\mathcal{D}^{(l')}_{q'm'}(\frakg)\right]
=\frac{1}{2l+1}\1\{l=l',q=q',m=m'\}.
\end{equation}

\item[(4)] Product of two real Wigner D-matrices: for any $l,l'\geq 0$ and $-l\leq q,m\leq l$
and $-l'\leq q',m'\leq l'$ and $\frakg\in\mathsf{SO}(3)$,
\begin{align}\label{eq:real Wignerproduct}
\mathcal{D}^{(l)}_{qm}(\frakg)\mathcal{D}^{(l')}_{q'm'}(\frakg)=\sum_{l''=|l-l'|}^{l+l'} 
\sum_{q''\in A(q,q')} \sum_{m''\in A(m,m')}
\mathcal{C}_{q,q',q''}^{l,l',l''} \overline{\mathcal{C}_{m,m',m''}^{l,l',l''}} \mathcal{D}_{q'' m''}^{l''}(\frakg),
\end{align}
where $\mathcal{C}_{q,q',q''}^{l,l',l''}$ is a real Clebsch-Gordan coefficient defined as
\begin{align*}
\mathcal{C}_{q,q',q''}^{l,l',l''}= \sum_{p=-l}^{l} \sum_{p'=-l'}^{l'} \sum_{p''=-l''}^{l''} \overline{Q^{l}_{qp}} \overline{Q^{l'}_{q'p'}} Q^{l''}_{q''p''} C_{p,p',p''}^{l,l',l''},
\end{align*}
and
\begin{align*}
A(q,q')=&\{q+q', q-q', q'-q, -q-q'\}, \\
A(m,m')=&\{m+m', m-m', m'-m, -m-m'\}.
\end{align*}
Notably, although the real Clebsch-Gordan coefficients are complex in general, they are either real or pure imaginary. In particular, for $q''\in A(q,q')$, there exist some real constant $C$ depending only on $l,l',l'',q,q',q''$ such that
\begin{align*}
    \mathcal{C}_{q,q',q''}^{l,l',l''}= C\cdot (1+(-1)^{l+l'-l''}) ~{\rm or}~ C\cdot(1-(-1)^{l+l'-l''})\i.
\end{align*}

\item[(5)] Third order identity: we can have a similar property as that of the complex analogues. We omit it here for simplicity.
\end{enumerate}

\section{Function Bases and Rotation Representation in Cryo-ET and Cryo-EM}\label{sec:function_basis}

This appendix provides the necessary background on function bases and rotation representations used in Appendix~\ref{sec:additional_examples}.

\subsection{Function Basis and Rotation Representation in Unprojected Cryo-EM and Cryo-ET}\label{sec:basis_rotation_cryoET}

\subsubsection{Function Basis}\label{sec:function_basis_cryoET}
For $f\in L_2(\R^3,\C)$, denote its Fourier transform
\begin{align}\label{eq:3D_FT}
\hat{f}(k_1,k_2,k_3)=\int_{\R^3} e^{-2\pi\i (k_1x_1+k_2x_2+k_3x_3)} f(x_1,x_2,x_3)\der x_1 \der x_2 \der x_3.
\end{align}
We reparametrize both Cartesian coordinates $x=(x_1,x_2,x_3)\in\R^3$ in the original function domain and $k=(k_1,k_2,k_3)\in\R^3$ in the Fourier domain by spherical coordinates $(r,\varphi_1,\varphi_2)$ with radius $r\geq 0$, latitude $\varphi_1\in[0,\pi]$, and longitude $\varphi_2\in[0,2\pi)$. With a slight abuse of notation, we write $f(r,\varphi_1,\varphi_2)$ and $\hat{f}(r,\varphi_1,\varphi_2)$ for this parametrization. 

We model the signal of interest as a function in $L_2(\R^3,\R)$ in the example of unprojected cryo-EM and cryo-ET. We describe the function bases for both $L_2(\R^3,\C)$ and $L_2(\R^3,\R)$ used in Section \ref{sec:model_cryoET}, following the basis choice in \cite{fan2024maximum}. Let $\{\hat{h}_{lsm}\}$ denote a set of complex functions, where each function is the product of the complex spherical harmonics $y_{lm}(\varphi_1,\varphi_2)$ (see Appendix \ref{sec:complex_SH}) and radial functions $j_s(r)$:
\begin{align*}
\hat{h}_{lsm}(r,\varphi_1,\varphi_2) = j_s(r) y_{lm}(\varphi_1,\varphi_2)~~~\text{for}~s\geq 1,~l\geq 0,~m=-l,-l+1,\ldots,l.
\end{align*}
Here, $\{j_s:s\geq 1\}$ may be any complete orthonormal system of radial basis functions $j_s:[0,\infty)\to \R$ for square-integrable functions with respect to the measure $r^2\der r$. That is,
\begin{align}\label{eq:orth_cryoET}
\int_0^{\infty} j_s(r) j_{s'}(r) r^2\der r = \1\{s=s'\}.
\end{align}
Using the spherical change-of-coordinates $\der x_1 \der x_2 \der x_3 = r^2 \sin\varphi_1 \der r\der \varphi_1 \der \varphi_2$ and the completeness of complex spherical harmonics for $L_2(\mathcal{S}^2,\C)$, these functions $\{\hat{h}_{lsm}\}$ form a complete orthonormal basis for $L_2(\R^3,\mathbb{C})$. Then so are their inverse Fourier transform $\{h_{lsm}\}$, by the Parseval relation. Consequently, any $f\in L_2(\R^3,\C)$ can be expressed as a series expansion in terms of $\{h_{lsm}\}$:
\begin{align*}
f=\sum_{s=1}^\infty \sum_{l=0}^{\infty}\sum_{m=-l}^l u_{m}^{(ls)} h_{lsm},
\end{align*}
where $u_{m}^{(ls)}$ are the expansion coefficients. Similarly, by linearity of the Fourier transform, its Fourier transform can be expanded in terms of $\{\hat{h}_{lsm}\}$: 
\begin{align*}
\hat{f}=\sum_{s=1}^\infty \sum_{l=0}^{\infty}\sum_{m=-l}^l u_{m}^{(ls)} \hat{h}_{lsm}.
\end{align*}

Note that a function $f\in L_2(\R^3,\C)$ is real-valued if and only if its Fourier transform satisfies
\begin{align*}
\hat{f}(r,\varphi_1,\varphi_2) = \overline{\hat{f}(r,\pi-\varphi_1,\pi+\varphi_2)}.
\end{align*}
Applying the symmetry property of the complex spherical harmonics $y_{lm}(\pi-\varphi_1,\pi+\varphi_2)=(-1)^{l+m} \overline{y_{l,-m}(\varphi_1,\varphi_2)}$ (see \cite[Appendix D.3.1]{fan2024maximum}), it may be shown that this condition is equivalent to the sign symmetry of the expansion coefficients 
\begin{align}\label{eq:real_symm_coef}
u_m^{(ls)}=(-1)^{l+m} \overline{u_{-m}^{(ls)}}
\end{align}
in the basis representations above. Thus, we could define a real basis $\{\mathbcal{h}_{lsm}\}$
\begin{align}\label{eq:complex_real_transformation}
\mathbcal{h}_{lsm}=\begin{cases}
        \frac{\i}{\sqrt{2}}\cdot (h_{lsm}-(-1)^{l+m} h_{l,s,-m}) &  \text{ if } m < 0, \\
        \i^l\cdot h_{ls0} & \text{ if } m = 0, \\
        \frac{1}{\sqrt{2}}\cdot (h_{l,s,-m}+(-1)^{l+m} h_{lsm}) &  \text{ if } m > 0. \\
    \end{cases}  
\end{align}
It can be shown $\mathbcal{h}_{lsm}$ satisfies \eqref{eq:real_symm_coef} for its coefficients $u_m^{(ls)}$ in the basis $\{h_{lsm}\}$, and hence is real-valued. Thus, $\{\mathbcal{h}_{lsm}\}$ form a complete orthonormal basis for $L_2(\R^3,\R)$. Consequently, any $f\in L_2(\R^3,\R)$ can be expressed in the following series expansions:
\begin{align*}
f=\sum_{s=1}^\infty \sum_{l=0}^{\infty}\sum_{m=-l}^l u_{m}^{(ls)} h_{lsm} = \sum_{s=1}^\infty \sum_{l=0}^{\infty}\sum_{m=-l}^l \theta_{m}^{(ls)} \mathbcal{h}_{lsm},
\end{align*}
where $\theta_m^{(ls)}$ are real free coefficients and $u_m^{(ls)}$ are complex coefficients satisfying the sign symmetry \eqref{eq:real_symm_coef}. For $f$ having a bandlimited representation in these bases, we can use the same index set $\mathcal{I}$ to express it in terms of both the complex and real bases, leading to
\begin{align*}
f=\sum_{(l,s,m)\in\mathcal{I}} u_{m}^{(ls)} h_{lsm} = \sum_{(l,s,m)\in\mathcal{I}} \theta_{m}^{(ls)} \mathbcal{h}_{lsm},
\end{align*}
where the index set $\mathcal{I}$ is defined in \eqref{eq:index_set}, with the bandlimit $L$ of the spherical frequency, and the bandlimit $S_l$ of the radial frequency for each spherical component $l=0,\ldots, L$. This corresponds to the $(L,S_0,\ldots,S_L)$-bandlimited real-valued function with respect to the basis $\{h_{lsm}\}$ as defined in Section \ref{sec:model_cryoET}, which serves as the primary assumption of our observational model. Furthermore, the vectors of coefficients $u^{(ls)}=(u_m^{(ls)}:-l\leq m\leq l\}\in\C^{2l+1}$ and $\theta^{(ls)}=(\theta_m^{(ls)}:-l\leq m\leq l\}\in\R^{2l+1}$ are related by a unitary transform $u^{(ls)}=(\check{Q}^{(ls)})^\T \theta^{(ls)}$ defined as 
\begin{align*}
u_m^{(ls)}=\begin{cases}
        \frac{1}{\sqrt{2}}\cdot (\theta_{|m|}^{(ls)}+\i \theta_{-|m|}^{(ls)}) &  \text{ if } m < 0, \\
        \i^l\cdot \theta_0^{(ls)} & \text{ if } m = 0, \\
        \frac{(-1)^{l+m}}{\sqrt{2}}\cdot (\theta_{|m|}^{(ls)}-\i \theta_{-|m|}^{(ls)}) &  \text{ if } m > 0. \\
    \end{cases}  
\end{align*}
If we order these coefficients according to the tuple $(l,s,m)$ (also see Section \ref{sec:model_cryoET}), we have the unitary transformation $u=\check{Q}^{\T}\theta$ with
\begin{align}\label{eq:Qtilde}
\check{Q}=\bigoplus_{l=0}^L \bigoplus_{s=1}^{S_l} \check{Q}^{(ls)}.
\end{align}

\subsubsection{Rotation Representation}\label{sec:rot_representation_cryoET}
Notably, due to the commutativity of the rotation operator and the Fourier transform, applying a rotation $\frakg\in\SO(3)$ to $f$ corresponds to the same rotation being applied to its Fourier transform $\hat{f}$. In other words,  
\begin{align*}
f_\frakg(x)=f(\frakg^{-1}\cdot x) ~~~\text{is equivalent to}~~~\hat{f}_\frakg(k)=\hat{f}(\frakg^{-1}\cdot k).
\end{align*}
For a $(L,S_0,\ldots,S_L)$-bandlimited function $f$, its Fourier transform has the basis expansion
\begin{align*}
\hat{f}(r,\varphi_1,\varphi_2)=&\sum_{(l,s,m)\in\mathcal{I}} u_{m}^{(ls)} \hat{h}_{lsm}\\
=&\sum_{l=0}^L\sum_{s=1}^{S_l}  j_s(r) \hat{f}_{ls}(\varphi_1,\varphi_2)
\end{align*}
with the $(l,s)$-th spherical shell component $\hat{f}_{ls}$ defined as
\begin{align*}
\hat{f}_{ls}(\varphi_1,\varphi_2)=\sum_{m=-l}^l  u_{m}^{(ls)} y_{lm}(\varphi_1,\varphi_2).
\end{align*}
Then the rotation action $\frakg$ acts separately on each $\hat{f}_{ls}$, and the vector of coefficients $u^{(ls)}$ transforms according to the map $u^{(ls)} \mapsto D^{(l)}(\frakg) 
\cdot u^{(ls)}$ for each $0\leq l\leq L$ and $1\leq s\leq S_l$ (see Appendix \ref{sec:complex_SH}). Applying the unitary relation $u^{(ls)}=(\check{Q}^{(ls)})^\T \theta^{(ls)}$, this rotation induces the transformation $\theta^{(ls)}\mapsto \check{\mathcal{D}}^{(ls)}(\frakg) \cdot \theta^{(ls)}$ on the vector of real coefficients where $\check{\mathcal{D}}^{(ls)}(\frakg)$ is an orthogonal matrix defined as 
\begin{align}
\check{\mathcal{D}}^{(ls)}(\frakg)=\overline{\check{Q}^{(ls)}} D^l(\frakg ) (\check{Q}^{(ls)})^\T.
\end{align}
It follows that the rotation representation for the whole vector $\theta$ of the real coefficients has the block-diagonal structure:
\begin{align*}
\check{\mathcal{D}}(\frakg)=\bigoplus_{l=0}^L \bigoplus_{s=1}^{S_l} \check{\mathcal{D}}^{(ls)}(\frakg).
\end{align*}

\subsection{Function Basis and Rotation Representation in (Projected) Cryo-EM}\label{sec:basis_rotation_cryoEM}

\subsubsection{Function Basis}\label{sec:function_basis_cryoEM}

For $f\in L_2(\R^3,\C)$, the tomographic projection operator $\Pi$ acts on $f$, yielding $\Pi\circ f \in L_2(\R^2,\C)$. Let $\hat{f}$ be its 3-D Fourier transform as defined in \eqref{eq:3D_FT} and denote the 2-D Fourier transform of its tomographic projection
\begin{align*}
\widehat{\Pi\circ f}(k_1,k_2)=\int_{\R^2} e^{-2\pi\i (k_1x_1+k_2x_2)} (\Pi\circ f)(x_1,x_2)\der x_1 \der x_2.
\end{align*}
We reparametrize the 3-D space using spherical coordinates as before. Similarly, we reparametrize both Cartesian coordinates $x=(x_1,x_2)\in\R^2$ in the projection domain and $k=(k_1,k_2)\in\R^2$ in the Fourier domain by polar coordinates $(r,\varphi_2)$ with radius $r\geq 0$ and angle $\varphi_2\in[0,2\pi)$. With a slight abuse of notation, we denote the reparametrized functions as $(\Pi \circ f)(r,\varphi_2)$ and $\widehat{\Pi \circ f}(r,\varphi_2)$. 

We continue to model the signal of interest as a function in $L_2(\R^3,\R)$ and model its tomographic projection as a function in $L_2(\R^2,\R)$. The basis functions we use for $L_2(\R^3,\C)$ and $L_2(\R^3,\R)$ have radial components that are slightly different from those described in Appendix \ref{sec:function_basis_cryoET}. Corresponding bases for $L_2(\R^2,\C)$ and $L_2(\R^2,\R)$ are introduced, naturally connected to the 3-D bases through the Fourier slice theorem (also see \cite{fan2024maximum}). Let $\{\hat{\tilde{h}}_{lsm}\}$ denote a set of complex functions in $L_2(\R^3,\C)$, where each function is the product of the complex spherical harmonics $y_{lm}(\varphi_1,\varphi_2)$ and the radial functions $\tilde{j}_s(r)$:
\begin{align*}
\hat{\tilde{h}}_{lsm}(r,\varphi_1,\varphi_2) = \tilde{j}_s(r) y_{lm}(\varphi_1,\varphi_2)~~~\text{for}~s\geq 1,~l\geq 0,~m=-l,-l+1,\ldots,l.
\end{align*}
Let $\{\hat{q}_{sm}\}$ denote a set of complex functions in $L_2(\R^2,\C)$, where each function is the product of the Fourier basis function and the same radial functions $\tilde{j}_s(r)$:
\begin{align*}
\hat{q}_{sm}(r,\varphi_2) = \tilde{j}_s(r) \cdot (2\pi)^{-1/2} \exp(\i m \varphi_2)~~~\text{for}~s\geq 1,~m\in\mathbb{Z}.
\end{align*}
Here, compared to the orthogonality relation in \eqref{eq:orth_cryoET}, $\{\tilde{j}_s:s\geq 1\}$ are chosen as any complete orthonormal system of radial basis functions $\tilde{j}_s:[0,\infty)\to \R$, defined for square-integrable functions with respect to the measure $r\der r$. Specifically,
\begin{align}\label{eq:orth_cryoEM}
\int_0^{\infty} \tilde{j}_s(r) \tilde{j}_{s'}(r) r\der r = \1\{s=s'\}.
\end{align}
The completeness and orthogonality ensures that $\{\hat{q}_{sm}\}$ form a complete orthonormal basis in $L_2(\R^2,\C)$. We write $\{q_{sm}\}$ for the 2-D inverse Fourier transform of $\{\hat{q}_{sm}\}$, and $\{\tilde{h}_{lsm}\}$ for the 3-D inverse Fourier transform of $\{\hat{\tilde{h}}_{lsm}\}$. Then $\{q_{sm}\}$ form a complete orthonormal basis in $L_2(\R^2,\R)$ by the Parseval relation.

For any function $f\in L_2(\R^3,\C)$, the Fourier slice theorem provides the connection in Cartesian coordinates:
\begin{align*}
\widehat{\Pi \circ f} (k_1,k_2) = \hat{f}(k_1,k_2, 0).
\end{align*}
In spherical coordinates $(r,\varphi_1,\varphi_2)$ for $\R^3$ and polar coordinates $(r,\varphi_2)$ for $\R^2$, this equivalently translates to
\begin{align}\label{eq:Fourier_slice_spherical}
\widehat{\Pi \circ f}(r,\varphi_2)=\hat{f}(r,\frac{\pi}{2},\varphi_2).
\end{align}
This restriction $\varphi_1=\pi/2$ applied to each complex spherical harmonic $y_{lm}$ yields
\begin{align*}
y_{lm}(\frac{\pi}{2},\varphi_2)=c_{lm}\cdot (2\pi)^{-1/2} \exp(\i m \varphi_2),
\end{align*}
where $c_{lm}$ is explicitly given by
\begin{align*}
c_{lm}=(-1)^m \sqrt{\frac{(2l+1)}{2}\frac{(l-m)!}{(l+m)!}} \cdot P_{lm}(0)
\end{align*}
with $P_{lm}(x)$ being the associated Legendre polynomials defined in \eqref{eq:legendre}. Setting $f=\tilde{h}_{lsm}$ in \eqref{eq:Fourier_slice_spherical}, we obtain
\begin{align*}
\widehat{\Pi \circ \tilde{h}_{lsm}}(r,\varphi_2)=&\hat{\tilde{h}}_{lsm}(r,\frac{\pi}{2},\varphi_2)= \tilde{j}_s(r) \cdot y_{lm}(\frac{\pi}{2},\varphi_2)\\
=& c_{lm} \cdot \tilde{j}_s(r) \cdot (2\pi)^{-1/2} \exp(\i m \varphi_2)\\
=& c_{lm}\cdot \hat{q}_{sm}(r,\varphi_2).
\end{align*}
Taking inverse Fourier transforms, we get
\begin{align}\label{eq:basis_projection_relation}
\Pi \circ \tilde{h}_{lsm}= c_{lm} \cdot q_{sm}.
\end{align}

For a $(L,S_0,\ldots,S_L)$-bandlimited function $f$ with respect to the basis $\{\tilde{h}_{lsm}\}$, expressed as
\begin{align*}
f=\sum_{(l,s,m)\in\mathcal{I}} u_{m}^{(ls)} \tilde{h}_{lsm},
\end{align*}
where the index set is defined in \eqref{eq:index_set}, it follows from \eqref{eq:basis_projection_relation} that its tomographic projection is also bandlimited in the basis $\{q_{sm}\}$, namely
\begin{align*}
\Pi \circ f = \sum_{(s,m)\in \tilde{\mathcal{I}}} \tilde{u}_m^{(s)} q_{sm}, 
\end{align*}
where the index set is defined in \eqref{eq:index_set_projection}. The coefficients $\tilde{u}_m^{(s)}$ are determined by the relation
\begin{align}\label{eq:coef_projection_relation}
\tilde{u}_m^{(s)}= \sum_{\substack{l=|m| \\ S_l\geq s}}^L c_{lm}\cdot u_{m}^{(ls)}.
\end{align}
To work with a real-valued $f$, applying the same transformation rule as in \eqref{eq:complex_real_transformation}, we define a real basis $\{\tilde{\mathbcal{h}}_{lsm}\}$ from $\{\tilde{h}_{lsm}\}$, such that 
\begin{align}
f=\sum_{(l,s,m)\in\mathcal{I}} u_{m}^{(ls)} \tilde{h}_{lsm}=\sum_{(l,s,m)\in\mathcal{I}} \theta_{m}^{(ls)} \tilde{\mathbcal{h}}_{lsm}.
\end{align}
Here, the coefficients $u=(u_m^{(ls)}:(l,s,m)\in\mathcal{I})\in \C^d$ for the complex basis and $\theta=(\theta_m^{(ls)}:(l,s,m)\in\mathcal{I})\in\R^d$ for the real basis are related via the same unitary transform $u=\check{Q}^\T \theta$ as defined in \eqref{eq:Qtilde}. Both $u$ and $\theta$ are ordered according to the tuple $(l,s,m)$, as described in Appendix \ref{sec:function_basis_cryoET}. For the projected function space, we similarly define a real basis $\{\mathbcal{q}_{sm}\}$ from $\{q_{sm}\}$ by 
\begin{align*}
\mathbcal{q}_{sm}=\begin{cases}
        \frac{\i}{\sqrt{2}}\cdot (q_{sm}-(-1)^m q_{s,-m}) &  \text{ if } m < 0, \\
        q_{s0} & \text{ if } m = 0, \\
        \frac{1}{\sqrt{2}}\cdot (q_{s,-m}+(-1)^m q_{sm}) &  \text{ if } m > 0, \\
    \end{cases}  
\end{align*}
such that the real-valued $\Pi \circ f$ can be expressed as
\begin{align*}
\Pi \circ f = \sum_{(s,m)\in \tilde{\mathcal{I}}} \tilde{u}_m^{(s)} q_{sm}=\sum_{(s,m)\in \tilde{\mathcal{I}}} \tilde\theta_m^{(s)} \mathbcal{q}_{sm}.
\end{align*}
Here, the coefficients $\tilde{u}=(\tilde{u}_m^{(s)}:(s,m)\in\mathcal{I})\in\C^{\td}$ for the complex basis and $\tilde{\theta}=(\tilde\theta_m^{(s)}:(s,m)\in\mathcal{I})\in\R^{\td}$ for the real basis, both ordered according to the tuple $(s,m)$, are related by a unitary transform $\tilde{u}=\tilde{Q}^\T \tilde\theta$ defined as
\begin{align*}
\tilde{u}_m^{(s)}=\begin{cases}
        \frac{1}{\sqrt{2}}\cdot (\tilde\theta_{|m|}^{(s)}+\i \tilde\theta_{-|m|}^{(s)}) &  \text{ if } m < 0, \\
         \tilde\theta_0^{(s)} & \text{ if } m = 0, \\
        \frac{(-1)^{m}}{\sqrt{2}}\cdot (\theta_{|m|}^{(s)}-\i \theta_{-|m|}^{(s)}) &  \text{ if } m > 0. \\
    \end{cases}  
\end{align*}
For the real-valued $\Pi \circ f$, its complex coefficients $\tilde{u}_m^{(s)}$ satisfy a sign symmetry analogous to that in the 3-D case \eqref{eq:real_symm_coef}:
\begin{align*}
\tilde{u}_m^{(s)}= (-1)^m \overline{\tilde{u}_{-m}^{(s)}}.
\end{align*}
Following the ordering of the coefficients described above, the relation \eqref{eq:coef_projection_relation} translates into a linear map $P^{\C}:\C^d\to \C^{\td}$ that acts on the complex coefficients, defined as
\begin{align}\label{eq:tomographic_proj_representation_complex}
\tilde{u}=P^{\C}u,~~~~~~P^{\C}_{(s',m'),(l,s,m)}=\1\{s=s'\}\cdot\1\{m=m'\}\cdot c_{lm}.
\end{align}
Combining with the unitary transforms $u=\check{Q}^\T \theta$ and $\tilde{u}=\tilde{Q}^\T \tilde\theta$, the tomographic projection $\Pi$ corresponds to a linear map $P:\R^d\to\R^{\td}$ that acts on the real coefficients, defined as
\begin{align}\label{eq:tomographic_proj_representation}
\tilde\theta=P \theta,~~~~~~P = \overline{\tilde{Q}} P^{\C} \check{Q}^{\T}.
\end{align}

\subsubsection{Rotation Representation}\label{sec:rot_representation_cryoEM}
The only difference in modeling $f$ compared to Appendix \ref{sec:function_basis_cryoET} lies in the choice of the radial basis functions. The rotation $\frakg\in\SO(3)$ applied to $f$ does not affect the radial component, so the rotation representation remains exactly the same as that in Appendix \ref{sec:rot_representation_cryoET}.

\section{Auxiliary Lemmas}

\begin{lemma}\label{lem:chi_square_tail}
Let $Z\sim \mathrm{N}(0,\Id_d)$ be a standard Gaussian random vector in $\R^d$. Then for any $t\geq d$, we have
\begin{align*}
    \P(\|Z\|^2\geq 5t)\leq \exp(-t).
\end{align*}
\end{lemma}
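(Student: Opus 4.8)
The plan is to identify $\|Z\|^2=\sum_{i=1}^d Z_i^2$ as a chi-square random variable with $d$ degrees of freedom and then invoke a standard sub-exponential tail bound. The cleanest route is to apply the Laurent--Massart inequality, which states that for $Z\sim\mathrm{N}(0,\Id_d)$ and any $x>0$,
\begin{align*}
\P\big(\|Z\|^2 \geq d + 2\sqrt{dx} + 2x\big) \leq e^{-x}.
\end{align*}
Setting $x=t$ and using the hypothesis $t\geq d$, I would bound the threshold: since $d\leq t$ gives $\sqrt{dt}\leq t$, we obtain $d + 2\sqrt{dt} + 2t \leq t + 2t + 2t = 5t$. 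Hence the event $\{\|Z\|^2\geq 5t\}$ is contained in $\{\|Z\|^2\geq d+2\sqrt{dt}+2t\}$, and the desired bound $\P(\|Z\|^2\geq 5t)\leq e^{-t}$ follows immediately.

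As a self-contained alternative that avoids citing the concentration inequality, I would carry out the Chernoff argument directly. The moment generating function of $\|Z\|^2$ is $\E[e^{\lambda\|Z\|^2}]=(1-2\lambda)^{-d/2}$ for $\lambda\in(0,1/2)$, so Markov's inequality yields
\begin{align*}
\P\big(\|Z\|^2 \geq 5t\big) \leq e^{-5\lambda t}\,(1-2\lambda)^{-d/2}.
\end{align*}
Choosing $\lambda = 3/10$ and using $d\leq t$ to bound $(1-2\lambda)^{-d/2}=(5/2)^{d/2}\leq (5/2)^{t/2}$, the right-hand side becomes $\exp\!\big(t(-\tfrac{3}{2}+\tfrac{1}{2}\ln\tfrac{5}{2})\big)$. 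A numerical check gives $-\tfrac{3}{2}+\tfrac{1}{2}\ln\tfrac{5}{2}\approx -1.042\leq -1$, so the exponent is at most $-t$, completing the bound.

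There is essentially no substantive obstacle here: the statement is a routine large-deviation estimate and both arguments are short. The only point requiring minor care is verifying that the slack constant $5$ is large enough, i.e.\ that the chosen Chernoff parameter (or the Laurent--Massart threshold) indeed yields an exponent no larger than $-t$ once $d\leq t$ is used to control the degrees-of-freedom contribution; this reduces to a one-line numerical inequality rather than any genuine difficulty. I would favor the Laurent--Massart route for brevity, keeping the direct Chernoff computation in reserve in case a fully elementary and dependency-free proof is preferred.
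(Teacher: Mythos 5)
Your primary argument is exactly the paper's proof: invoke the Laurent--Massart inequality (Lemma 1 of \cite{laurent2000adaptive}) with $x=t$ and use $t\geq d$ to bound $d+2\sqrt{dt}+2t\leq 5t$, so the proposal is correct and takes essentially the same approach. Your self-contained Chernoff computation with $\lambda=3/10$ is also valid and would serve as a dependency-free alternative, but it is not needed.
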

\begin{proof}
Note that $\|Z\|^2$ follows the chi-squared distribution with $d$ degrees of freedom. We have for any $t\geq d$, 
\begin{align*}
\P(\|Z\|^2\geq 5t) \leq \P(\|Z\|^2 \geq d+2\sqrt{dt} +2t )\leq \exp(-t),
\end{align*}
where the last inequality is by Lemma 1 of \cite{laurent2000adaptive}.
\end{proof}

\end{appendix}

\paragraph{Acknowledgments.} The authors would like to thank Dan Edidin, Will Sawin, and Ziquan Zhuang for helpful discussions on this work.

\paragraph{Funding.} Amit Singer and Sheng Xu  were supported in part by AFOSR FA9550-23-1-0249, the Simons Foundation Math+X Investigator Award, NSF DMS 2009753, and NIH/NIGMS R01GM136780-01. Anderson Ye Zhang was supported in part by NSF DMS 2112988.

\bibliographystyle{plain} 
\bibliography{main}

\end{document}